\newcommand{\hght}{\operatorname{ht}}
\newcommand{\length}{\operatorname{\lambda}}
\newcommand{\Spec}[1]{\operatorname {Spec(#1)}}
\newcommand{\red}[1]{#1_{\operatorname {red}}}
\newcommand{\mf}[1]{\mathfrak #1}
\DeclareMathOperator{\eh}{e}
\DeclareMathOperator{\ehk}{e_{HK}}
\DeclareMathOperator{\hkf}{\lambda_q}
\DeclareMathOperator{\Minh}{Minh}
\DeclareMathOperator{\Max}{Max}
\DeclareMathOperator{\Min}{Min}
\DeclareMathOperator{\Ass}{Ass}
\renewcommand{\frq}[1]{{#1}^{[q]}}
\renewcommand{\hat}{\widehat}
\newtheorem{theorem}{Theorem}
\newtheorem{lemma}[theorem]{Lemma}
\newtheorem{proposition}[theorem]{Proposition}
\newtheorem{corollary}[theorem]{Corollary}
\theoremstyle{definition}
\newtheorem{definition}[theorem]{Definition}
\theoremstyle{remark}
\newtheorem{remark}[theorem]{Remark}
\newtheorem{question}[theorem]{Question}
\begin{document}

\title[Equimultiplicity in Hilbert-Kunz theory]
{Equimultiplicity in Hilbert-Kunz theory}

\author{Ilya Smirnov}
\address{Department of Mathematics\\
University of Michigan\\
Ann Arbor, MI 48109, USA}
\email{ismirnov@umich.edu}

\dedicatory{Dedicated to Craig Huneke on the occasion of his 65th Birthday.}

\date{\today}

\begin{abstract}
We study further the properties of Hilbert-Kunz multiplicity as a measure of singularity.
This paper develops a theory of equimultiplicity for Hilbert-Kunz multiplicity and applies it
to study the behavior of Hilbert--Kunz multiplicity on the Brenner--Monsky hypersurface.
A number of applications follows, in particular 
we show that Hilbert--Kunz multiplicity attains infinitely many values
and that equimultiple strata may not be locally closed.
\end{abstract}

\keywords{Hilbert--Kunz multiplicity, tight closure, equimultiplicity}
\subjclass[2010]{13D40, 13A35, 13H15, 14B05}

\maketitle

\section*{Introduction}

Hilbert--Samuel multiplicity is a classical invariant of a local ring 
that generalizes the notion of the multiplicity of a curve at a point.
The multiplicity may be regarded as a measure of singularity, where the lowest possible value, $1$,
corresponds to a smooth point.
In study of singularities, we are naturally led to study equimultiple points, 
i.e., a point such that the Hilbert--Samuel multiplicity is constant on 
the subvariety defined as the closure of the point. 
For example, it can be considered as the weakest form of equisingularity,
where we would say that two points are equally singular if the multiplicities are equal. 
This notion has been studied extensively, 
partially due to its appearance in Hironaka's work on the resolution of singularities in characteristic zero.

In $1983$ Monsky defined a new version of multiplicity, specific for positive characteristic. 
It was called Hilbert--Kunz multiplicity, in honor of Ernst Kunz who in $1969$ initiated the study of 
positive characteristic numerical invariants (\cite{Kunz1, Kunz2}). 
If we use $\length$ to denote the length of an artinian module, 
the Hilbert--Kunz multiplicity of a local ring $(R, \mf m)$ of characteristic $p$ 
can be defined by the limit
\[
\ehk (R) = \lim_{n \to \infty}\frac{1}{p^{n\dim R}} \length(R/\mf m^{[p^n]}),
\]
where we denoted the ideal of $p^n$-powers, $(x^{p^n} \mid x \in \mf m)$, by $\mf m^{[p^n]}$.
This invariant is still largely mysterious, and there is a lot to understand.
The purpose of this work is to build an equimultiplicity theory for Hilbert--Kunz multiplicity 
and to see what properties of the usual multiplicity are preserved.

As a tool to study singularities, Hilbert--Kunz multiplicity shares some properties with the classical multiplicity, 
but often has a more complicated behavior. While the Hilbert--Samuel multiplicity of a local ring is always an integer,
the Hilbert--Kunz multiplicity is not. 
However, this also allows us to think about Hilbert--Kunz multiplicity as a finer invariant.
Just like the usual multiplicity, it detects regular rings (Watanabe and Yoshida, \cite{WatanabeYoshida}); 
but it is also meaningful to consider rings of Hilbert--Kunz multiplicity 
sufficiently close to $1$, and expect that their singularities should get better.
Blickle and Enescu (\cite{BlickleEnescu} and its improvement \cite{AberbachEnescu} by Aberbach and Enescu)
showed that rings with sufficiently small Hilbert--Kunz multiplicity are Gorenstein and F-regular.

These results show that Hilbert--Kunz multiplicity is a measure of singularity,
and the next natural step is to study its geometric properties, with a view towards 
a possible use for the resolution of singularities. 

The first step was done by Kunz who showed in \cite[Proposition~3.3]{Kunz2}
that Hilbert--Kunz multiplicity satisfies the localization property:
$\ehk(\mf p) \leq \ehk(\mf m)$ for any prime ideal $\mf p$ in a local equidimensional catenary ring $(R, \mf m)$.
The author showed that Hilbert--Kunz multiplicity is upper semi-continuous (\cite{Smirnov}),
which implies that the maximum value locus is a subvariety.
Following Bierstone and Milman (\cite{BierstoneMilman}),
we may ask whether Hilbert--Kunz multiplicity satisfies the stronger condition of Zariski-semicontinuity; for example,
this would show that Hilbert--Kunz multiplicity on a fixed variety should attain only finitely many values.
As observed in \cite[Lemma~3.10]{BierstoneMilman}, Zariski-semicontinuity can be restated as openness 
for all $a$ of the sets
\[
X_{\leq a} = \{\mf p \in \Spec R \mid \ehk (\mf p) \leq a\}.
\] 
After using Nagata's criterion for openness,
this will require us to prove that for some $s \notin \mf p$ 
we can make the Hilbert--Kunz multiplicity to be constant on $V(\mf p) \cap D(s)$, or, as it fits to say,
make $\mf p$ {\it equimultiple} by inverting an element. 
In order to do so, we first need to understand the equimultiplicity condition
and the goal of this paper is to build such understanding.

\subsection{Overview of the results}
Before proceeding to our results, let us quickly review the classical picture.
The analytic spread of an ideal $I$, $\ell (I)$, is defined as the Krull dimension of the fiber cone
\[F(I) = R/\mf m \oplus I/\mf mI \oplus I^2/\mf mI^2 \oplus \cdots.\]
The analytic spread plays an important role in the classical Hilbert--Samuel theory, 
because it determines the size of a minimal reduction,
i.e., the number of general elements in $I$ needed to generate $I$ up to integral closure.
Let us use $\Minh (I)$ to denote the set of all prime ideals $P \supseteq I$ such that $\dim R/P = \dim R/I$, and
recall the classical characterization of equimultiple ideals (\cite{Rees, Lipman}).
\begin{theorem}\label{HS equimultiple}
Let $(R, \mf m, k)$ be a formally equidimensional local ring.
For an ideal $I$ the following conditions are equivalent:
\begin{enumerate}
\item[(a)] $\ell (I) = \hght (I)$,
\item[(b)] for every (equivalently, some) parameter ideal $J$ modulo $I$ 
\[\eh (I + J) = \sum_{P \in \Minh (I)} \eh(J, R/P) \eh(I, R_P),\] 
\item[(c)] if $k$ is infinite, there is a system of parameter $J = (x_1, \ldots, x_r)$ 
modulo $I$ which is a part of a system of parameters in $R$, and such that
\begin{itemize}
\item if $r < \dim R$ then $\sum_{P \in \Minh (I)} \eh(J, R/P) \eh(IR_P) = \eh(I R/J)$
\item if $r = \dim R$ then $\sum_{P \in \Minh (I)} \eh(J, R/P) \eh(IR_P) = \eh (J)$,
\end{itemize}
\item[(d)] for every (equivalently, some) system of parameters $(x_1, \ldots, x_r)$ modulo $I$, 
all $0 \leq i < r$, and all $n$ 
\[
x_{i + 1} \text{ is regular modulo } \overline{(I, x_1, \ldots, x_i)^n}.
\]	
\end{enumerate}
\end{theorem}
An ideal that satisfies any of these equivalent condition is called {\it equimultiple}.
This name originates from the special case when $I = \mf p$ is a prime ideal such that $R/\mf p$ is regular. 
In this case the condition (b) means that $\eh (\mf m) = \eh (\mf p)$.

In this paper we develop a theory of {\it Hilbert--Kunz equimultiple} ideals.
Again, if $I = \mf p$ is prime and $R/\mf p$ is regular, then $\mf p$ is Hilbert--Kunz equimultiple
if and only if $\ehk (\mf m) = \ehk (\mf p)$. 
The obtained characterization is very similar: we replace powers and integral closure with Frobenius powers and tight closure.
The most important difference is that an analogue of part (a) is missing
because we are not able to find a satisfactory replacement for analytic spread (see Question~\ref{Spread} and 
the surrounding discussion). 
Our findings are collected in the following theorem 
(Theorem~\ref{HK equimultiple}, Corollaries~\ref{excellent capture} and \ref{modulo sop}).

\begin{theorem}\label{HK intro}
Let $(R, \mf m)$ be a local ring of positive characteristic $p > 0$.
Furthermore suppose that $R$ is either formally unmixed with a locally stable test element $c$
or is excellent and equidimensional (e.g., a complete domain).
For an ideal $I$ the following conditions are equivalent:
\begin{enumerate}
\item[(b)] for every (equivalently, some) parameter ideal $J$ modulo $I$ 
\[\ehk (I + J) = \sum_{P \in \Minh (I)} \ehk(J, R/P) \ehk(I, R_P),\] 
\item[(c)] there is a system of parameter $J = (x_1, \ldots, x_r)$ 
modulo $I$ which is a part of a system of parameters in $R$, and such that
\begin{itemize}
\item if $r < \dim R$ then $\sum_{P \in \Minh (I)} \ehk(J, R/P) \ehk(IR_P) = \ehk(I R/J)$
\item if $r = \dim R$ then $\sum_{P \in \Minh (I)} \ehk(J, R/P) \ehk(IR_P) = \ehk (J)$,
\end{itemize}
\item[(d)] for every (equivalently, some) system of parameters $(x_1, \ldots, x_r)$ modulo $I$, 
all $0 \leq i < r$, and all $q = p^e$ 
\[
x_{i + 1} \text{ is regular modulo } ((I, x_1, \ldots, x_i)^{[q]})^*.
\]	
\end{enumerate}
\end{theorem}

The main consequence of this characterization is a surprising negative answer (Corollary~\ref{not locally constant}): 
Hilbert--Kunz multiplicity is not Zariski-semicontinuous in the sense of \cite{BierstoneMilman}.

\begin{corollary}
Let $R = F[x, y, z, t]/(z^4 +xyz^2 + (x^3 + y^3)z + tx^2y^2)$, where $F$ is the algebraic closure of $\mathbb Z/2\mathbb Z$.
Then the set
\[
X_{\leq 3} = \{\mf p \in \Spec R \mid \ehk (\mf p) \leq 3\}
\]
is not open.
\end{corollary}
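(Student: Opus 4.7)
The strategy is to find a prime $\mf p \in X_{\leq 3}$ whose closure $V(\mf p)$ is irreducible of dimension one and contains infinitely many closed points lying outside $X_{\leq 3}$; any closed subset of such a $V(\mf p)$ containing infinitely many closed points must be all of $V(\mf p)$, so this will yield the contradiction. The natural candidate on the Brenner-Monsky hypersurface is $\mf p = (x,y,z) R$: the defining polynomial lies in $(x,y,z)$, so $R/\mf p \cong F[t]$ and $V(\mf p) \cong \Spec{F[t]}$ is an affine line over the algebraically closed field $F$, whose closed points are the maximal ideals $\mf m_\alpha = (x,y,z,t-\alpha)$ for $\alpha \in F$.

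First I would check $\mf p \in X_{\leq 3}$. The explicit Brenner-Monsky computations on this hypersurface exhibit many $\alpha \in F$ for which $\ehk(R_{\mf m_\alpha}) = 3$, and upper semi-continuity of $\ehk$ from \cite{Smirnov} then gives $\ehk(R_\mf p) \leq \ehk(R_{\mf m_\alpha}) = 3$. The main obstacle is the second step: producing infinitely many $\alpha \in F$ with $\ehk(R_{\mf m_\alpha}) > 3$. This is the Brenner-Monsky phenomenon on which the whole argument rests and the place where Theorem~\ref{HK intro} is crucially invoked in the paper; indeed, if only finitely many such $\alpha$ existed, then after inverting a suitable $s \notin \mf p$ the prime $\mf p$ would be Hilbert-Kunz equimultiple, and condition (b) of Theorem~\ref{HK intro} applied with the parameter ideal $J = (t-\alpha)$ modulo $\mf p$ would force $\ehk(R_{\mf m_\alpha}) = \ehk(J, R/\mf p)\,\ehk(R_\mf p) = \ehk(R_\mf p)$ for a cofinite set of closed points of $V(\mf p)$, ruling out the required variation. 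So the key input from the Brenner-Monsky calculations, refined through Theorem~\ref{HK intro}, is precisely this infinitude.

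With the two ingredients in place, the conclusion is a short topological argument. Suppose for contradiction that $X_{\leq 3}$ is open; then its complement $X_{>3}$ is closed in $\Spec R$. The intersection $X_{>3} \cap V(\mf p)$ is a closed subset of the irreducible one-dimensional space $V(\mf p)$ containing infinitely many closed points, and hence must equal $V(\mf p)$. In particular $\mf p \in X_{>3}$, contradicting the first step and completing the proof.
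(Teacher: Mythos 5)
Your proof has a genuine gap in the first step, and the second step is too vague to carry the weight it needs.

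The first step asserts that there are many $\alpha \in F$ with $\ehk(R_{\mf m_\alpha}) = 3$, but this is false — and in fact the entire point of the proposition underlying this corollary is that no such $\alpha$ exists. Since $F$ is the algebraic closure of $\mathbb Z/2\mathbb Z$, every $\alpha \in F$ is algebraic over $\mathbb Z/2\mathbb Z$, and Monsky's computation (Theorem~\ref{BM ehk}) gives $\ehk(R_\alpha) = 3 + 4^{-m} > 3$ (or $3.5$ when $\alpha = 0$) for algebraic $\alpha$; the value $3$ is attained only for transcendental $\alpha$, which lie outside $F$. One must also distinguish $\ehk(R_\alpha)$ (a two-dimensional local ring) from $\ehk(R_{\mf m_\alpha})$ (a three-dimensional one); for the latter, the chain of inequalities $3 = \ehk(\mf p) \leq \ehk(\mf m_\alpha) \leq \ehk(R_{\mf m_\alpha}/(t-\alpha)) = \ehk(R_\alpha)$ at first leaves open whether the value is $3$, and showing the leftmost inequality is strict is exactly where the equimultiplicity theory is needed. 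The correct way to establish $\mf p \in X_{\leq 3}$ is the paper's: by Cohen's structure theorem, $\widehat{R_P}$ is (up to a harmless field extension) $K[[x,y,z]]/(z^4 + xyz^2 + (x^3 + y^3)z + t x^2 y^2)$ with $t$ transcendental in the coefficient field $K \supseteq F(t)$, so Monsky's transcendental case gives $\ehk(R_\mf p) = 3$ directly. (Alternatively, one could let $m \to \infty$ in $\ehk(\mf p) \leq \ehk(R_\alpha) = 3 + 4^{-m}$ to get $\ehk(\mf p) \leq 3$, but that still passes through $\ehk(\mf m_\alpha) \leq \ehk(R_\alpha)$ via Proposition~\ref{HK specialization}, not through any closed point with $\ehk = 3$.)

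The second step is circular as written: invoking condition (b) of Theorem~\ref{HK intro} to conclude $\ehk(R_{\mf m_\alpha}) = \ehk(R_\mf p)$ merely restates the equimultiplicity assumption rather than deriving a contradiction, and ``the required variation'' is never pinned down. What actually closes the argument is Corollary~\ref{equi prime modulo}: because $\mf m_\alpha/\mf p$ is principal, generated by $t-\alpha$, equimultiplicity $\ehk(\mf m_\alpha) = \ehk(\mf p)$ would force $\ehk(\mf p) = \ehk(R_{\mf m_\alpha}/(t-\alpha)) = \ehk(R_\alpha) > 3$, a contradiction. This shows $\ehk(\mf m_\alpha) > 3$ for \emph{every} $\alpha$, not merely cofinitely many; and then your topological argument — which is correct — shows $X_{\leq 3}$ cannot be open, since $X_{\leq 3} \cap V(\mf p)$ is just the generic point.
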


This readily implies that Hilbert--Kunz multiplicity attains infinitely many values 
(Corollary~\ref{inf many values}) on the Brenner--Monsky hypersurface $z^4 +xyz^2 + (x^3 + y^3)z + tx^2y^2 = 0$.
This hypersurface was previously used by Brenner and Monsky to provide a counter-example to the notorious 
localization problem in tight closure theory (\cite{BrennerMonsky}). 

In order to understand the Brenner--Monsky counter-example better and compare it to the results of \cite{HochsterHuneke3},
Dinh has studied the associated primes of the Frobenius powers of the prime ideal $P = (x,y,z)$ 
defining the singular locus of the aforementioned hypersurface.
Using the calculations that Monsky made to obtain Theorem~\ref{BM ehk}, 
Dinh (\cite{Dinh}) proved that $\bigcup_q \Ass (\frq{P})$ is infinite. 
However, he was only able to show that the maximal ideals corresponding to the
irreducible factors of $1 + t + t^2 + \ldots + t^q$ appear as associated primes,
while our methods immediately give all associated primes of the Frobenius powers and their tight closures.

\begin{proposition}
In the Brenner--Monsky example, 
\[\bigcup_q \Ass (\frq{P})^* = \bigcup_q \Ass (\frq{P}) = \Spec {R/P}.\] 
In particular, the set is infinite.
\end{proposition}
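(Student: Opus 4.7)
The plan is to verify both $\bigcup_q \Ass((\frq{P})^*) \supseteq \Spec{R/P}$ and $\bigcup_q \Ass(\frq{P}) \supseteq \Spec{R/P}$, which combined with the elementary reverse containment give both equalities. Since $R/P \cong F[t]$ and $F$ is algebraically closed, $\Spec{R/P}$ consists of the generic point $P$ together with the maximal ideals $Q_\lambda := (x, y, z, t - \lambda)$ for $\lambda \in F$. The reverse containments $\bigcup_q \Ass(\frq{P}) \cup \bigcup_q \Ass((\frq{P})^*) \subseteq V(P)$ follow from $(\frq{P})^* \subseteq \overline{\frq{P}} \subseteq P$, and the generic point $P$ lies in both unions as a minimal prime of $\frq{P}$. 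Thus the task reduces to placing each $Q_\lambda$ in both unions.

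Fix $\lambda \in F$. By Monsky's computation (Theorem~\ref{BM ehk}), $\ehk(R_{Q_\lambda}) > 3 = \ehk(R_P)$. Since $R_{Q_\lambda}/PR_{Q_\lambda}$ is a one-dimensional regular local ring, the special case where $R/\mathfrak{p}$ is regular (noted in the introduction) reduces Hilbert-Kunz equimultiplicity of $PR_{Q_\lambda}$ to the equality $\ehk(R_{Q_\lambda}) = \ehk(R_P)$; as this equality fails, $PR_{Q_\lambda}$ is not Hilbert-Kunz equimultiple. Condition~(d) of Theorem~\ref{HK intro}, applied with the one-element system of parameters $(t - \lambda)$ modulo $PR_{Q_\lambda}$, then produces $q = q_\lambda$ such that $t - \lambda$ is a zerodivisor modulo $((PR_{Q_\lambda})^{[q]})^*$. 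All associated primes of this ideal contain $PR_{Q_\lambda}$, so the only prime of $R_{Q_\lambda}$ that can also contain $t - \lambda$ is the maximal ideal $Q_\lambda R_{Q_\lambda}$; hence $Q_\lambda R_{Q_\lambda} \in \Ass_{R_{Q_\lambda}}(((PR_{Q_\lambda})^{[q]})^*)$. Choosing a witnessing element and clearing denominators produces $y \in R$ with $yQ_\lambda \subseteq ((PR_{Q_\lambda})^{[q]})^*$ and $y \notin ((PR_{Q_\lambda})^{[q]})^*$; because a locally stable test element of $R$ remains a test element after localization, $y \notin (\frq{P})^*$ in $R$.

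To descend to $R$, fix a locally stable test element $c \in R$. Unfolding the definition of tight closure of $yu \in ((PR_{Q_\lambda})^{[q]})^*$ for each generator $u$ of $Q_\lambda$ gives $c(yu)^{q'} \in P^{[qq']} R_{Q_\lambda}$ for $q' \gg 0$; clearing denominators (and taking the product over the finitely many generators) produces $s \notin Q_\lambda$ with $scy^{q'} Q_\lambda^{[q']} \subseteq P^{[qq']}$. The standard Frobenius equivalence $y \in (\frq{P})^* \Leftrightarrow y^{q'} \in (P^{[qq']})^*$, combined with $y \notin (\frq{P})^*$, forces $cy^{q'} \notin P^{[qq']} R_{Q_\lambda}$ for arbitrarily large $q'$, whence $scy^{q'} \notin P^{[qq']}$ in $R$. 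Therefore $scy^{q'}$ is nonzero in $R/P^{[qq']}$ with annihilator containing $Q_\lambda^{[q']}$; by maximality of $Q_\lambda$, any associated prime lying over this annihilator equals $Q_\lambda$, giving $Q_\lambda \in \Ass(P^{[qq']})$. The parallel assertion $Q_\lambda \in \Ass((P^{[qq']})^*)$ follows by repeating the maximality argument inside $R_{Q_\lambda}$ against the ideal $((PR_{Q_\lambda})^{[qq']})^*$ and transporting the conclusion back via the localization formula for associated primes. The principal technical obstacle throughout is the well-known failure of tight closure to commute with localization---precisely the pathology the Brenner-Monsky example was designed to exhibit---which necessitates meticulous bookkeeping of test elements and denominators in passing between $R$ and $R_{Q_\lambda}$.
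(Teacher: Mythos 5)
Your strategy mirrors the paper's: use Monsky's computation plus the equimultiplicity criterion (condition (d) of Theorem~\ref{HK intro}) to conclude that $PR_{Q_\lambda}$ fails colon capturing, so that $Q_\lambda R_{Q_\lambda}$ is an embedded prime of $\left((PR_{Q_\lambda})^{[q]}\right)^*$, and then descend to an embedded prime of $P^{[qq']}$ by clearing denominators against a test element. The argument for $\bigcup_q \Ass(\frq{P}) = \Spec R/P$ is essentially the paper's: produce $z = s c y^{q'}$ with $z Q_\lambda^{[q']} \subseteq P^{[qq']}$ and $z \notin P^{[qq']}$, and use maximality of $Q_\lambda$. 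That part is correct (one should pick a single $q'$ large enough that both the denominator-clearing and the non-membership $cy^{q'} \notin P^{[qq']}R_{Q_\lambda}$ hold simultaneously, but this is a minor bookkeeping point).

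However, there is a genuine gap in the argument that $Q_\lambda \in \bigcup_q \Ass\bigl((\frq{P})^*\bigr)$. You claim this follows from $Q_\lambda R_{Q_\lambda} \in \Ass_{R_{Q_\lambda}}\bigl(\bigl((PR_{Q_\lambda})^{[q]}\bigr)^*\bigr)$ by ``transporting the conclusion back via the localization formula for associated primes.'' But the localization formula applies to $\Ass_R(M)$ and $\Ass_{R_{Q_\lambda}}(M_{Q_\lambda})$ when the second module is literally the localization of the first. Here, for $I = (P^{[qq']})^*$, the module $R_{Q_\lambda}/\bigl((PR_{Q_\lambda})^{[qq']}\bigr)^*$ is \emph{not} known to equal $(R/I)_{Q_\lambda} = R_{Q_\lambda}/(P^{[qq']})^*R_{Q_\lambda}$: only the containment $(P^{[qq']})^*R_{Q_\lambda} \subseteq \bigl((PR_{Q_\lambda})^{[qq']}\bigr)^*$ is automatic, and strict containment is exactly what the Brenner--Monsky example is designed to exhibit. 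So you cannot transport the embedded prime back this way, and the alternative of using the element $z$ directly would require $z \notin (P^{[qq']})^*$, which you have not established (you only know $z \notin P^{[qq']}$, and multiplication by $sc$ can land an element in a tight closure). The paper avoids this by arguing by contradiction: if $Q_\lambda \notin \bigcup_q \Ass\bigl((\frq{P})^*\bigr)$, then each $(\frq{P})^*R_{Q_\lambda}$ is $P$-primary, and since $P^{[q]}R_{Q_\lambda} \subseteq (\frq{P})^*R_{Q_\lambda} \subseteq (P^{[q]}R_{Q_\lambda})^*$, Corollary~\ref{tc primarity} (applied in $R_{Q_\lambda}$) forces $(P^{[q]}R_{Q_\lambda})^*$ to be $P$-primary for every $q$; by Corollary~\ref{general HK equimultiplicity} this gives $\ehk(Q_\lambda) = \ehk(P)$, contradicting Monsky's computation. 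I recommend replacing your final sentence with this contradiction argument (or, equivalently, invoking Lemma~\ref{tclocal} to show that localization at $Q_\lambda$ \emph{would} commute with tight closure if $Q_\lambda$ avoided all the $\Ass\bigl((\frq{P})^*\bigr)$, again yielding a contradiction).
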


Our applications demonstrate that in Hilbert--Kunz theory equimultiplicity is a far more restrictive condition
than equimultiplicity in Hilbert--Samuel theory.
This fits well with our understanding of Hilbert--Kunz multiplicity as a finer invariant; 
when there are many more possible values it is less likely for the values at two points to coincide.

The presented applications were obtained in the Brenner--Monsky hypersurface
because we use Monsky's computations in \cite{MonskyQP}.
Potentially the theory can applied to any other example
where the Hilbert--Kunz multiplicity was computed in a family (e.g., \cite{MonskyQL});
we chose the simplest and the best understood family.

It is also worth to highlight a very surprising corollary (Corollary~\ref{weakly f-regular}), obtained by
combining Theorem~\ref{HK intro} with Proposition~\ref{HK functions}, 
that gives a criterion for equality of the entire Hilbert--Kunz functions of $\mf p$ and $\mf m$.
\begin{corollary}
Let $(R, \mf m)$ be a weakly F-regular excellent local domain and let $\mf p$ be a prime ideal such that $R/\mf p$ is regular.
Then $\ehk (\mf m) = \ehk(\mf p)$ if and only if the entire
 Hilbert--Kunz functions (not just the limits!) of $R$ and $R_\mf p$ coincide. 
\end{corollary}

Last, I want to remark that the presented theory has recently
found an application in Linquan Ma's breakthrough on Lech's conjecture (\cite{Ma}).
As a key ingredient of his proof, Ma showed that perfect ideals are Hilbert--Kunz equimultiple.

\subsection{The structure of the paper}
The absence of reductions and analytic spread required us to develop new tools.
The uniform convergence methods provide a crucial tool for our proof, Corollary~\ref{ehk limits}, 
and we use Tucker's ideas from \cite{Tucker} to develop Section~\ref{HK equi convergence}.
Corollary~\ref{ehk limits} is interesting in its own right and might be useful in a variety of situations
(such as Ma's recent work in \cite{Ma}).

We use this tool in Section~\ref{HK equi general}, where we build the bulk of the Hilbert--Kunz equimultiplicity theory,
resulting in Theorem~\ref{HK equimultiple} and its corollaries. 
Perhaps the most challenging part of the proof is an analogue of the equivalence between (b) and (d) of 
Theorem~\ref{HS equimultiple}, which we achieve in Theorems~\ref{coloninc2}, \ref{capturing equivalence}.
Later in the section we study the general behavior of Hilbert--Kunz equimultiplicity under ring operations in order to relax some of 
the assumptions of the aforementioned theorems.
We also investigate the equivalence between ``some'' and ``every''
in that analogue of part (d) under milder assumptions (Corollary~\ref{equivalence}).
In the end of the section, we study the behavior of Hilbert--Kunz equimultiplicity under localization and specialization:
Proposition~\ref{equi localizes} shows that the property is preserved under localization, and Corollary~\ref{HK specialization}
provides a Bertini-type result. These results are used to establish, in Corollary~\ref{modulo sop}, 
the condition (c) of Theorem~\ref{HK intro}.

In Section~\ref{Applications}, we will use the machinery we have constructed
to study the Brenner--Monsky hypersurface and obtain a number of interesting results.

As a first step, in Corollary~\ref{global char} we establish equivalence  
between equimultiplicity of prime ideal $\mf p$ of dimension one and the $\mf p$-primary property of the tight closures 
$(\frq{\mf p})^*$. Then we use the previously mentioned results on specialization 
(Corollaries~\ref{equi specialization}, \ref{equi prime modulo})  in Proposition~\ref{HK counterexample}, where 
we derive from Monsky's computations in \cite{MonskyQP} 
that any point on the curve defining the singular locus has Hilbert--Kunz multiplicity greater than the generic point.
This implies that the Hilbert--Kunz multiplicity attains infinitely many values on the curve.
Hence, the stratification by Hilbert--Kunz equimultiple strata need not be finite (Corollary~\ref{inf many values}).
Also, in Corollary~\ref{not locally constant}, we will show that
an individual stratum need not be locally closed
and the set $X_{\leq a}$ need not be open.

As a second application of our methods, in Proposition~\ref{infinite ass} we easily compute 
the set of all associated primes of the Frobenius powers (or tight closures of the Frobenius powers) 
for the defining ideal of the previously mentioned curve. 
This improves a result of Dinh (\cite{Dinh}) who showed that the set is infinite.

In the end of the paper, we give a list of open problems.

\section{Preliminaries}

\numberwithin{theorem}{section}
\numberwithin{equation}{section}

In this paper we study commutative Noetherian rings, typically of characteristic $p > 0$. 
For convenience, we use notation $q = p^e$ where $e \in \mathbb N$ may vary.
For an ideal $I$ of $R$ let $\frq{I}$ be the ideal generated by the $q$th powers of the elements of $I$.

\subsection {Tight closure}

Let us briefly review some results from tight closure theory that will be needed for our work. 
For proofs and a more detailed exposition we refer the reader to \cite{HochsterHuneke1}.

\begin{definition}
Let $R$ be a ring and let $I$ be an ideal of $R$. 
The tight closure $I^*$ of $I$ consists of all elements $x$ of $R$ such that there exists 
a fixed element $c$ such that $c$ is not contained in any minimal prime of $R$ and 
\[
cx^q \in \frq {I}
\] 
for all sufficiently large $q$.
\end{definition}

\begin{definition}
Let $R$ be a Noetherian ring of characteristic $p > 0$. 
We say that an element $c$ is a {\it test element} if $c$ is not contained in any minimal prime and
for every ideal $I$ and element $x \in R$ 
$x \in I^*$ if and only if $cx^q \in \frq{I}$ for all $q$.

Furthermore, we say that $c$ is a {\it locally stable test element} 
if the image of $c$ in $R_P$ is a test element for any prime $P$.
\end{definition}

Test elements make tight closure a very powerful tool and have had a great use.
After a tremendous amount of work in \cite[Theorem~6.1]{HochsterHuneke2}, 
Hochster and Huneke obtained the following existence theorem.

\begin{theorem}\label{test exist}
Let $R$ be reduced and either F-finite or essentially finite type over an excellent local ring.
Then $R$ has a locally stable test element. 
\end{theorem}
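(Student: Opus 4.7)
The plan is to isolate the following key claim, from which the theorem follows readily: if $c \in R$ is not in any minimal prime and $R_c$ is regular, then some power $c^N$ is a test element. The theorem then reduces to producing such a $c$, which I would do via openness of the regular locus.

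Producing $c$: in both settings $R$ is excellent --- by hypothesis in the essentially-finite-type-over-excellent case, and by a theorem of Kunz in the F-finite case. Hence the singular locus $V(J)$ is closed in $\Spec{R}$, with $J$ its radical defining ideal. Since $R$ is reduced and Noetherian, every minimal prime of $R$ localizes to a field and therefore lies in the regular locus; consequently $J$ is contained in no minimal prime of $R$, and prime avoidance yields $c \in J$ avoiding all finitely many minimal primes. Then $R_c$ is regular, as required.

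Proving the key claim: take $x \in I^*$, so there is some $d$ not in any minimal prime with $d x^q \in \frq{I}$ for all large $q$. Passing to $R_c$, the image of $x^q$ lies in $(\frq{I})^{*} R_c \subseteq (\frq{I} R_c)^{*}$ (the latter inclusion is the easy direction of the localization problem, which is general), and this equals $\frq{I} R_c$ because $R_c$ is regular and every ideal of a regular ring is tightly closed. Clearing denominators gives $c^{M_q} x^q \in \frq{I}$ for some exponent $M_q$ depending on $q$ and $I$. The main obstacle is to bound all such $M_q$ by a single constant $N$, independent of both $q$ and $I$; such an $N$ will then upgrade $c^N$ to a genuine test element.

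This uniform bound is the deep content of the proof. My plan is to exploit the module-finite extension $R \hookrightarrow R^{1/p}$ --- available directly in the F-finite case, and after a Gamma construction $R \rightsquigarrow R^{\Gamma}$ in the essentially-finite-type-over-excellent case, where one then separately verifies that test elements descend along the faithfully flat map $R \to R^{\Gamma}$ back to $R$. Over the regular locus $\Spec{R_c}$ the finite $R_c$-module $R_c^{1/q}$ is locally free by Kunz's theorem, so a single power $c^N$ carries every $R^{1/q}$ into a free $R$-submodule of $R^{1/q}$ in a manner uniform in $q$. Translating this module-theoretic containment back into the language of ideals yields $c^N x^q \in \frq{I}$ simultaneously for all $q$ and all $I$, producing the desired test element.
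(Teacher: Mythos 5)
The paper does not prove this theorem: it is quoted verbatim as Theorem~6.1 of Hochster and Huneke's ``F-regularity, test elements, and smooth base change'' (\cite{HochsterHuneke2}), so there is no internal argument to compare against. Your sketch does capture the architecture of the Hochster--Huneke proof: isolate the claim that $c^N$ is a (completely stable) test element for some $N$ whenever $c$ avoids the minimal primes and $R_c$ is regular; produce such a $c$ via openness of the regular locus in an excellent ring (using Kunz's theorem that F-finite rings are excellent, and the observation that minimal primes of a reduced ring are regular points); and close the argument by exploiting the module structure of $R^{1/q}$, which is locally free over $R_c$ by Kunz, together with the $\Gamma$-construction and descent for the non-F-finite case. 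The first two steps are correct as written, and the localization step giving $c^{M_q}x^q \in I^{[q]}$ is sound.

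The gap lies exactly where you flag it, and it is larger than the sketch suggests. Your uniformity claim --- that a single power $c^N$ carries every $R^{1/q}$ into a free $R$-submodule $F_q \subseteq R^{1/q}$ --- would first need its own proof: a priori the power of $c$ annihilating $R^{1/q}/F_q$ could grow with $q$. More importantly, even granting it (with $1$ as a basis vector of $F_q$, so that one obtains $R$-linear maps $\phi_q \colon R^{1/q} \to R$ with $\phi_q(1) = c^N$), the test-element conclusion does not fall out of the obvious manipulation: applying $\phi_q$ to $d^{1/q}x \in I R^{1/q}$ controls $x\,\phi_q(d^{1/q})$, which bears no visible relation to $c^N x$. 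The Hochster--Huneke argument is more structured: one reduces to a domain module-finite over a regular subring $A$, shows that multiplication by a fixed power of $c$ factors through a free $A$-module, proves a persistence/descent statement for tight closure along such extensions, and passes through the intermediate notion of a $q_0$-weak test element before upgrading to a completely stable test element. Your proposal correctly identifies the ingredients but stops short of the mechanism that actually makes the uniform bound yield a test element.
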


Test elements provide us with a very useful lemma.

\begin{lemma}\label{intersection}
Let $(R, \mf m)$ be a local ring of positive characteristic with a test element $c$. 
If $I$ and $J$ are two proper ideals then 
\[
I^* = \cap_q (I, \frq{J})^*.
\]
\end{lemma}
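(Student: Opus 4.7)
The containment $I^* \subseteq \bigcap_q (I, \frq{J})^*$ is immediate from Proposition~\ref{TC properties}(3), since $I \subseteq (I, \frq{J})$ for every $q$. So the substance of the lemma is the reverse inclusion, and my plan is to exploit the test element $c$ together with Krull's intersection theorem.

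Suppose $x \in \bigcap_q (I, \frq{J})^*$. Because $c$ is a test element, for every fixed $q$ the membership $x \in (I, \frq{J})^*$ forces
\[
c x^{q'} \in (I, \frq{J})^{[q']} = \frq[q']{I} + J^{[qq']}
\]
for all $q'$. Now I fix $q'$ and vary $q$: this yields
\[
c x^{q'} \in \bigcap_{q} \bigl(\frq[q']{I} + J^{[qq']}\bigr).
\]
Since $J$ is proper, $J \subseteq \mf m$, so $J^{[qq']} \subseteq J^{qq'} \subseteq \mf m^{qq'}$, and the powers $qq'$ tend to infinity with $q$. Hence the intersection above is contained in
\[
\bigcap_{n} \bigl(\frq[q']{I} + \mf m^{n}\bigr),
\]
which equals $\frq[q']{I}$ by Krull's intersection theorem applied in the local ring $R$ (equivalently, every ideal is closed in the $\mf m$-adic topology).

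This gives $c x^{q'} \in \frq[q']{I}$ for \emph{every} $q'$, and invoking the test element property once more in the other direction yields $x \in I^*$, as required.

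The argument is short, and I do not anticipate a serious obstacle; the one step that deserves care is ensuring that $J^{[q]}$ really is contained in $\mf m^q$ (which uses that $J$ is proper and hence sits inside the maximal ideal of the \emph{local} ring), so that Krull's intersection theorem can be applied to collapse the intersection $\bigcap_q (\frq[q']{I} + J^{[qq']})$ down to $\frq[q']{I}$. This is where the local hypothesis is genuinely used; in a non-local setting one would need an additional assumption to close off this step.
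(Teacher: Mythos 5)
Your proof is correct and follows exactly the same route as the paper's: fix $q'$, use the test element to extract $cx^{q'} \in I^{[q']} + J^{[qq']}$ for all $q$, collapse the intersection to $I^{[q']}$ via Krull's intersection theorem, and conclude $x \in I^*$. The only cosmetic caveat is that the paper applies Krull directly to $\cap_n (I^{[q']} + J^n)$ rather than passing through $\mf m^{qq'}$, but the content is identical.
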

\begin{proof}
This easily follows from the definition: if $x$ belongs to the intersection, then for all $q$
\[
cx^{q'} \in \cap_q (I, \frq{J})^{[q']}
\]
and we are done by Krull's intersection theorem.
\end{proof}

\subsection{Tight closure and Hilbert--Kunz multiplicities}

For our work we will need to extend the definition given in the Introduction. 
Its existence was proved by Monsky (\cite{Monsky}).

\begin{definition}
Let $(R, \mf m)$ be a local ring of characteristic $p > 0$, $I$ be an $\mf m$-primary ideal,
and $M$ be a finite $R$-module. 
The limit  
\[
\ehk(I, M) := \lim_{q \to \infty} \frac{1}{q^{\dim R}} \length(M/\frq{I}M),
\]
is called the Hilbert--Kunz multiplicity of $M$ with respect to $I$.
\end{definition}

If $I= \mf m$, we will call it the {\it Hilbert--Kunz multiplicity} of $M$ and denote it by $\ehk(M)$. 
For a prime ideal $\mf p$, it is convenient to denote $\ehk (\mf pR_\mf p)$ by $\ehk (\mf p)$.
This naturally makes Hilbert--Kunz multiplicity to be a function on the spectrum and
the goal of this paper is to understand when $\ehk(\mf m) = \ehk (\mf p)$.

Now we discuss a very useful connection between Hilbert--Kunz multiplicity and tight closure.
For two ideals $I \subset J$, it is easy to see that $\ehk (I) \geq \ehk(J)$, 
but equality may hold despite that the two ideals are distinct.
The following theorem, due to Hochster and Huneke (\cite[Theorem~8.17]{HochsterHuneke1}), 
describes when does it happen.

Recall that a local ring is formally unmixed if $\Ass (0\widehat{R}) = \Minh (0\widehat R)$, or, in other words, 
$\dim \widehat{R}/P = \dim R$ for every associated prime of $\widehat {R}$.
For example, a complete domain is formally unmixed.

\begin{theorem}\label{HH TC}
Let $(R, \mf m)$ be a formally unmixed local ring and  $I \subseteq J$ are ideals.
Then $J \subseteq I^*$ if and only if $\ehk(I) = \ehk(J)$.
\end{theorem}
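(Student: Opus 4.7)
The plan is to split the argument into the two implications; the forward direction is a clean length estimate using a test element, while the backward direction requires a reduction to complete local domains plus a harder quantitative tight-closure argument.

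For the forward direction ($J \subseteq I^*$ implies $\ehk(I) = \ehk(J)$), monotonicity of $\ehk$ (via the surjection $R/\frq{I} \twoheadrightarrow R/\frq{J}$) gives $\ehk(I) \geq \ehk(J) \geq \ehk(I^*)$, so it suffices to show $\ehk(I) = \ehk(I^*)$. Invoking Theorem~\ref{test exist} (after the standard reductions) produces a test element $c$, and the definition of $I^*$ yields $c\cdot\frq{(I^*)} \subseteq \frq{I}$ for every $q$. From the short exact sequence $0 \to \frq{(I^*)}/\frq{I} \to R/\frq{I} \to R/\frq{(I^*)} \to 0$ and the standard colon-ideal bookkeeping
\[
\length\bigl(\frq{(I^*)}/\frq{I}\bigr) \le \length\bigl((\frq{I}:c)/\frq{I}\bigr) = \length\bigl(R/(\frq{I}+(c))\bigr)
\]
(the second equality uses $R/(\frq{I}:c) \cong (\frq{I}+(c))/\frq{I}$ via multiplication by $c$), we bound the difference of lengths. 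Because $c$ avoids every minimal prime, $\dim R/(c) < d = \dim R$, so the right-hand side is $O(q^{d-1})$; dividing by $q^d$ and passing to the limit gives $\ehk(I) = \ehk(I^*)$.

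For the reverse direction, fix $y \in J$: since $I \subseteq I+(y) \subseteq J$ and $\ehk$ is monotone, the hypothesis forces $\ehk(I) = \ehk(I+(y))$, and it suffices to show $y \in I^*$. Passing to $\widehat R$ preserves both sides and preserves formal unmixedness by definition, so assume $R$ is complete. The key reduction is the additivity of Hilbert-Kunz multiplicity along a prime filtration,
\[
\ehk(K, R) = \sum_{P \in \Min R} \length(R_P)\,\ehk(K, R/P),
\]
valid for any $\mf m$-primary $K$ because formal unmixedness forces every associated prime of $R$ to have dimension $d$ (so that lower-dimensional factors in the filtration contribute nothing asymptotically). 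Combined with the coordinate-wise monotonicity $\ehk(I, R/P) \ge \ehk(I+(y), R/P)$, equality of sums with positive coefficients forces equality $\ehk(I, R/P) = \ehk(I+(y), R/P)$ for every $P \in \Min R$. By Proposition~\ref{TC properties}(5), it then suffices to establish the implication when $R$ is a complete local domain.

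In that setting, the identity $(I+(y))^{[q]} = \frq{I} + (y^q)$ and the isomorphism $R/(\frq{I}:y^q) \cong (\frq{I}+(y^q))/\frq{I}$ induced by multiplication by $y^q$ translate the hypothesis into
\[
\lim_{q \to \infty} \frac{\length(R/(\frq{I}:y^q))}{q^d} = \ehk(I) - \ehk(I+(y)) = 0.
\]
The main obstacle is now to extract from this asymptotic vanishing a single nonzero $c$ with $c \in \frq{I}:y^q$ for all $q$---that is, $y \in I^*$. The intended strategy is contrapositive: if $y \notin I^*$, produce a quantitative lower bound forcing $\length(R/(\frq{I}:y^q)) \gtrsim q^d$ on an unbounded set of $q$, contradicting the limit. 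Obtaining such a bound appears to be the deepest step; in a complete local domain one would hope to access it via Noether normalization (viewing $R$ as a finite torsion-free module over a regular subring and comparing Hilbert-Kunz multiplicities there) or via the colon-capturing behavior of parameter ideals, but verifying the estimate uniformly in $q$ is precisely where the full tight-closure machinery of \cite{HochsterHuneke1} must enter.
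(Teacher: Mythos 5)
This statement is not proved in the paper: it is cited as \cite[Theorem~8.17]{HochsterHuneke1}. So there is no ``paper's own proof'' to compare against, and the question is whether your proposed argument stands on its own. It does not.

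Your forward direction (``$J \subseteq I^*$ implies $\ehk(I) = \ehk(J)$'') is essentially sound, but your invocation of Theorem~\ref{test exist} is both unnecessary and unjustified in the stated generality: that theorem requires $R$ to be reduced and either F-finite or essentially of finite type over an excellent local ring, none of which is assumed here, and ``the standard reductions'' to reach such a situation are not as automatic as you suggest. The cleaner path avoids test elements entirely: since $J = (y_1, \ldots, y_n)$ with each $y_i \in I^*$, there is a single $c \in R^\circ$ (a product of the witnesses) with $c\,\frq{J} \subseteq \frq{I}$ for $q \gg 0$. Then the four-term exact sequence $0 \to (\frq{I}:c)/\frq{I} \to R/\frq{I} \xrightarrow{\ c\ } R/\frq{I} \to R/(\frq{I}+(c)) \to 0$ gives $\length(\frq{J}/\frq{I}) \leq \length((\frq{I}:c)/\frq{I}) = \length(R/(\frq{I}+(c)))$, and since $c$ avoids all minimal primes, $\dim R/(c) < \dim R$, whence the right side is $o(q^{\dim R})$. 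Note that formal unmixedness plays no role here; that hypothesis is only needed for the converse.

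Your backward direction is a correct sequence of reductions followed by an explicit acknowledgement that the substance of the proof is missing. Reducing to a single $y \in J$, passing to $\widehat{R}$, using the associativity formula plus formal unmixedness to get $\ehk(I, R/P) = \ehk(I+(y), R/P)$ for each minimal prime $P$, and then appealing to Proposition~\ref{TC properties}(5) to reduce to a complete local domain: all of this is right. But the complete local domain case is the entire mathematical content of the theorem. Your translation to $\lim_q \length(R/(\frq{I}:y^q))/q^d = 0$ is correct, and you correctly observe that one must convert an asymptotic vanishing of colon colengths into the existence of a single $c \neq 0$ with $c y^q \in \frq{I}$ for all $q \gg 0$. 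That step is precisely where Hochster and Huneke's argument does its work (via a careful analysis involving a Noether normalization of $R$ by a complete regular local ring and comparing torsion-free ranks), and it cannot be waved through with a gesture at ``the full tight-closure machinery.'' As written, your proposal establishes one implication and reduces the other to the main theorem, which is a genuine gap, not an omitted routine detail.
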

 
The following lemma shows that the Hilbert--Kunz multiplicity of $I$ can be computed using the filtration $(\frq{I})^*$;
it can be thought of as a generalization of the ``only if'' direction. 
We are interested in this filtration, since it is often more useful then the usual filtration $\frq{I}$.

\begin{remark}
If $R$ has a test element $c$, then, by definition, $c\sqrt{0} = c 0^* = 0$.
Since $c$ does not belong to any minimal prime, it follows that $R_P$ is reduced for any minimal prime $P$ of $R$.
Therefore the dimension of the nilradical of $R$ is less than $\dim R$.
Thus $\ehk(I, M) = \ehk(I, \red{R} \otimes_R M)$ for any $\mf m$-primary ideal $I$ and module $M$.
\end{remark}

\begin{lemma}\label{tcinvariance}
Let $(R, \mf m)$ be a local ring of characteristic $p > 0$, $I$ be an $\mf m$-primary ideal, 
and $M$ be a finitely generated $R$-module. If $R$ has a test element $c$, then
\[
\lim\limits_{q \to \infty} \frac{1}{q^d} \length(M/(\frq{I})^*M) = \ehk (I, \red{R} \otimes_R M) = \ehk (I, M).
\]
\end{lemma}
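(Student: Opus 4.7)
The plan is to reduce the first equality to the estimate $\length((\frq{I})^*M/\frq{I}M) = o(q^{\dim R})$, which, combined with the short exact sequence
\[
0 \to (\frq{I})^*M/\frq{I}M \to M/\frq{I}M \to M/(\frq{I})^*M \to 0
\]
and division by $q^{\dim R}$, immediately yields $\lim_{q} q^{-\dim R}\length(M/(\frq{I})^*M) = \ehk(I, M)$. The second equality $\ehk(I, M) = \ehk(I, R_{red} \otimes_R M)$ is supplied by the Remark preceding the lemma.

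To control $(\frq{I})^*M/\frq{I}M$ I would exploit the test element $c$ in the standard way. Applied to $x \in (\frq{I})^*$, the defining condition at $q' = 1$ yields $cx \in \frq{I}$, so $c \cdot (\frq{I})^* \subseteq \frq{I}$ and therefore $(\frq{I})^*M/\frq{I}M$ is annihilated by $c$. This submodule thus embeds into the kernel of multiplication by $c$ on $M/\frq{I}M$; since an endomorphism of a finite length module has kernel and cokernel of equal length,
\[
\length\big((\frq{I})^*M/\frq{I}M\big) \leq \length(M/(\frq{I}M + cM)) = \length\big((M/cM)/\frq{I}(M/cM)\big).
\]
Monsky's theorem applied to the finitely generated $R$-module $M/cM$ guarantees that $\lim_q q^{-\dim R}\length((M/cM)/\frq{I}(M/cM))$ exists, and this limit vanishes whenever $\dim(M/cM) < \dim R$; it therefore suffices to establish that dimension inequality.

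Because $c$ is a test element it avoids every minimal prime of $R$. A saturated chain of primes in $R/cR$ corresponds to a chain in $R$ of primes containing $c$; its smallest member strictly contains some minimal prime $Q$, and prepending $Q$ yields a chain in $R$ one step longer. Hence $\dim R/cR \leq \dim R - 1$, and a fortiori $\dim(M/cM) \leq \dim R - 1$. Consequently $\length((\frq{I})^*M/\frq{I}M) = o(q^{\dim R})$ and the first equality follows.

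I expect the most delicate point to be the dimension drop $\dim R/cR < \dim R$: in the absence of catenarity or equidimensionality assumptions on $R$ the principal ideal theorem does not apply directly, but the chain-prepending argument above is valid in full generality, so no hypothesis on $R$ beyond the existence of the test element is needed.
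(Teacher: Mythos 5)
Your proof is correct and follows essentially the same route as the paper: both arguments rest on the observation $c\,(\frq{I})^* \subseteq \frq{I}$, use it to bound the length gap $\length((\frq{I})^*M/\frq{I}M)$ by $\length(M/(c,\frq{I})M)$ (the paper via the right-exact sequence obtained by tensoring $R \xrightarrow{c} R \to R/(c) \to 0$, you via the kernel–cokernel length equality for the endomorphism $c$ of the finite-length module $M/\frq{I}M$ — the same computation in different clothing), and then kill the error term by the dimension drop $\dim M/cM < \dim R$, which you justify with a clean chain-prepending argument requiring no catenarity. The only presentational difference is that the paper first reduces to $R_{\mathrm{red}}$ and then proves one equality, whereas you prove the first-equals-third equality directly and invoke the preceding Remark for the second; both organizations are fine.
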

\begin{proof}
First, by \cite[Proposition~4.1(j), Proposition~6.1(d)]{HochsterHuneke1}, $c$ is still a test element in $\red{R}$ and
$R/(\frq{I})^* \cong \red{R}/(\frq{I}\red{R})^*$.
Thus, using the previous remark, we assume that $R$ is reduced. 

Now, consider the exact sequence 
\[
R \xrightarrow{c} R \to R/(c) \to 0.
\]
Since $c(\frq{I})^* \subseteq \frq{I}$, we obtain that the sequence
\[
R/(\frq{I})^* \otimes_R M \xrightarrow{c} R/\frq{I} \otimes_R M \to R/(c, \frq{I}) \otimes_R M \to 0.
\]
is still exact. 
Together with the inclusion $\frq{I} \subseteq (\frq{I})^*$ this shows that
\[
\length (M/(\frq{I})^*M) \leq \length (M/\frq{I}M) \leq \length (M/(\frq{I})^*M) + \length (M/(c, \frq{I})M).
\]
Since $c$ is not contained in any minimal prime, $\dim M/cM \leq \dim R/(c) < \dim R$.
Therefore $\ehk(I, M) = \lim\limits_{q \to \infty} \frac{1}{q^d} \length(M/(\frq{I})^*M)$.
\end{proof}

We will need the following corollary to deal with localization of tight closure.
Unfortunately, tight closure does not commute with localization, but there is still the inclusion
$I^* R_{\mf p} \subseteq (IR_\mf p)^*$, where the first closure is taken in $R$ and the second is in $R_\mf p$.
Thus, the corollary allows us to compute the Hilbert--Kunz multiplicity of $R_\mf p$ by taking 
the filtration $(\frq{\mf p})^*R_\mf p$.

\begin{corollary}\label{tc sequence limit}
Let $(R, \mf m)$ be a local ring of characteristic $p > 0$ with a test element $c$.
Let $I$ be an $\mf m$-primary ideal and $I_q$ be a sequence of ideals such that 
$\frq{I} \subseteq I_q \subseteq (\frq{I})^*$. Then 
\[\lim\limits_{q \to \infty} \frac{1}{q^d} \length(R/I_q) = \ehk (I).\]
\end{corollary}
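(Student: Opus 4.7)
The plan is to apply the classical squeeze principle, leveraging Lemma~\ref{tcinvariance} directly. The chain of inclusions $\frq{I} \subseteq I_q \subseteq (\frq{I})^*$ reverses under passage to quotients, giving the length inequality
\[
\length(R/(\frq{I})^*) \;\leq\; \length(R/I_q) \;\leq\; \length(R/\frq{I}).
\]
All three quantities are finite since $I$ is $\mf m$-primary (and thus so are $\frq{I}$ and $(\frq{I})^*$).

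Next I would divide through by $q^d$, where $d = \dim R$, and take the limit as $q \to \infty$. The right-hand term $\length(R/\frq{I})/q^d$ converges to $\ehk(I)$ by the defining limit of Hilbert-Kunz multiplicity (Monsky's theorem, invoked in the definition). The left-hand term $\length(R/(\frq{I})^*)/q^d$ also converges to $\ehk(I)$ by Lemma~\ref{tcinvariance}, which is precisely where the hypothesis that $R$ has a test element gets used. The squeeze principle then forces the middle sequence to converge to the same value, namely $\ehk(I)$.

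There really is no obstacle here: the statement is an immediate corollary of Lemma~\ref{tcinvariance} combined with a sandwich argument. The content of the proof is entirely upstream, in showing that the tight closure filtration $(\frq{I})^*$ has the same normalized length limit as $\frq{I}$; this corollary simply records the useful observation that any filtration trapped between these two has that same limit. No additional hypothesis (such as formal unmixedness) is needed, because Lemma~\ref{tcinvariance} already handles the nilpotent part via the reduction to $R_{red}$.
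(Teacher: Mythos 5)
Your proof is correct and matches the paper's argument exactly: both sandwich $\length(R/I_q)$ between $\length(R/\frq{I})$ and $\length(R/(\frq{I})^*)$, then invoke Lemma~\ref{tcinvariance} to identify the limits of the two outer normalized sequences with $\ehk(I)$. Your explicit remark that formal unmixedness is not needed because Lemma~\ref{tcinvariance} already reduces to $R_{\mathrm{red}}$ is accurate and consistent with the paper's hypotheses.
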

\begin{proof}
The claim follows from Lemma~\ref{tcinvariance}, since the inclusions 
$\frq{I} \subseteq I_q \subseteq (\frq{I})^*$ give that
\[
\ehk(I) \geq \lim\limits_{q \to \infty} \frac{1}{q^d} \length(R/I_q) 
\geq \lim\limits_{q \to \infty} \frac{1}{q^d} \length(R/(\frq{I})^*) = \ehk (I).
\]
\end{proof}

More importantly, there is a partial converse to this equality improving further Theorem~\ref{HH TC}.
It will be very useful later, as it provides us with a way to detect when a filtration is in tight closure.

\begin{lemma}\label{getintc}
Let $(R, \mf m)$ be an formally unmixed local ring of characteristic $p > 0$ and $I$ be an $\mf m$-primary ideal.
If $I_q$ is a sequence of ideals such that $\frq{I} \subseteq I_q$, $I_q^{[q']} \subseteq I_{qq'}$ for all $q, q'$, and 
\[\lim_{q \to \infty} \frac{\length{(R/I_q)}}{q^d} = \ehk(I),\]
then $I_q \subseteq (\frq{I})^*$ for all $q$.
\end{lemma}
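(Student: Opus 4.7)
The plan is to reduce the statement to Theorem~\ref{HH TC}. Since $R$ is formally unmixed and $\frq{I} \subseteq I_q$ with both ideals $\mf m$-primary, it suffices to prove that $\ehk(I_q) = \ehk(\frq{I}) = q^d\,\ehk(I)$ for every fixed $q$; the conclusion $I_q \subseteq (\frq{I})^*$ then follows directly.

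To compute $\ehk(I_q)$ using the definition, I would sandwich $\length(R/I_q^{[q']})$ between two lengths that can be evaluated via the given hypothesis. For the upper bound, the inclusion $\frq{I} \subseteq I_q$ raised to the $q'$th Frobenius power gives $I^{[qq']} \subseteq I_q^{[q']}$, so
\[
\ehk(I_q) = \lim_{q'\to\infty} \frac{\length(R/I_q^{[q']})}{(q')^d} \leq \lim_{q'\to\infty} \frac{\length(R/I^{[qq']})}{(q')^d} = q^d\,\ehk(I),
\]
where the last equality comes from reindexing $qq'$ as a subsequence of powers of $p$ tending to infinity. For the lower bound, the multiplicative hypothesis $I_q^{[q']} \subseteq I_{qq'}$ gives
\[
\ehk(I_q) = \lim_{q'\to\infty} \frac{\length(R/I_q^{[q']})}{(q')^d} \geq \lim_{q'\to\infty} \frac{\length(R/I_{qq'})}{(q')^d} = q^d \lim_{q'\to\infty} \frac{\length(R/I_{qq'})}{(qq')^d} = q^d\,\ehk(I),
\]
using the assumed limit $\length(R/I_{q''})/(q'')^d \to \ehk(I)$.

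Combining the two bounds yields $\ehk(I_q) = q^d\,\ehk(I) = \ehk(\frq{I})$, and Theorem~\ref{HH TC} applied to the chain $\frq{I} \subseteq I_q$ in the formally unmixed ring $R$ gives $I_q \subseteq (\frq{I})^*$, as desired.

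The only real subtlety I foresee is making sure that the passage from the limit over all $q'$ to the limit over the subsequence $\{qq'\}$ is legitimate; this is fine because the hypothesis asserts convergence of the full sequence indexed by powers of $p$, so every subsequence converges to the same value. The rest is just a careful bookkeeping of Frobenius exponents, with the key nontrivial input being Theorem~\ref{HH TC}, which converts the length equality into a tight closure containment.
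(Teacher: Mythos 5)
Your proof is correct and matches the paper's argument: you use the chain $I^{[qq']} \subseteq I_q^{[q']} \subseteq I_{qq'}$ to sandwich $\ehk(I_q)$ between $q^d\,\ehk(I)$ from both sides, then invoke Theorem~\ref{HH TC}. Your version is simply a more spelled-out form of the same computation.
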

\begin{proof}
By the assumptions on $I_q$, 
\[I^{[qq']} \subseteq I_q^{[q']} \subseteq I_{qq'}.\]
Therefore,
\[q^d \ehk (I) = \ehk (\frq {I}) \geq \ehk (I_q) \geq \lim_{q' \to \infty} \frac{\length{(R/I_{qq'})}}{(q')^d} = q^d\ehk(I),\]
so $\ehk (\frq{I}) = \ehk(I_q)$.
Hence, $I_q \subseteq (\frq{I})^*$  by Theorem~\ref{HH TC}.
\end{proof}

The following lemma helps us to understand what it means for an element to be regular modulo 
tight closures of consecutive powers. Recall that for an ideal $I$ and an element $x \notin I$
we denote $I : x^\infty = \bigcup_n I : x^n$.

\begin{lemma}\label{tc nzd}
Let $(R, \mf m)$ be a local ring of characteristic $p > 0$, $I$ an ideal, and $x$ an element.
Suppose that $R$ has a test element $c$, then the following are equivalent:
\begin{enumerate}
\item[(a)] $x$ is not a zero divisor modulo $(\frq{I})^*$ for any $q$, 
\item[(b)] $\frq{I}: x^\infty \subseteq (\frq{I})^*$ for all $q$,
\item[(c)] for all $q$ there are ideals $I_q$ such that $x$ is not a zero divisor modulo $I_q$ and 
$\frq{I} \subseteq I_q \subseteq (\frq{I})^*$.
\end{enumerate}
\end{lemma}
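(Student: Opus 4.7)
The plan is to run the short cycle (a) $\Rightarrow$ (b) $\Rightarrow$ (c) $\Rightarrow$ (b) $\Rightarrow$ (a), in which (a) $\Rightarrow$ (b) and (b) $\Leftrightarrow$ (c) are essentially formal, while (b) $\Rightarrow$ (a) is the substantive step and is where the test element does real work.

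For (a) $\Rightarrow$ (b), I would iterate the hypothesis $(\frq{I})^* : x = (\frq{I})^*$ to get $(\frq{I})^* : x^n = (\frq{I})^*$ for every $n$; chaining with $\frq{I} \subseteq (\frq{I})^*$ yields $\frq{I} : x^n \subseteq (\frq{I})^*$, and taking the union over $n$ produces (b). For (b) $\Rightarrow$ (c), the natural candidate is $I_q = \frq{I} : x^\infty$: the sandwich $\frq{I} \subseteq I_q \subseteq (\frq{I})^*$ is immediate from (b), and the non-zero-divisor property of $x$ modulo $I_q$ is formal, since $xy \in I_q$ forces $x^{n+1} y \in \frq{I}$ for some $n$, which puts $y$ back in $I_q$. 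The converse (c) $\Rightarrow$ (b) is even shorter: $\frq{I} : x^n \subseteq I_q : x^n = I_q \subseteq (\frq{I})^*$, where the middle equality uses that $x$ is a non-zero-divisor modulo $I_q$.

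The main step (b) $\Rightarrow$ (a) is where I expect real work. Given $xy \in (\frq{I})^*$, let $c$ be the test element; the test-element characterization gives $c(xy)^{q'} \in (\frq{I})^{[q']} = I^{[qq']}$ for every $q'$, and rearranging gives $cy^{q'} \in I^{[qq']} : x^{q'} \subseteq I^{[qq']} : x^\infty$. Applying (b) at the higher Frobenius power $I^{[qq']}$ upgrades this to $cy^{q'} \in (I^{[qq']})^*$ for every $q'$. The crucial move is then to read this membership through the test-element characterization in its cheapest form, at exponent $Q = 1$: since $cy^{q'} \in (I^{[qq']})^*$, the test element gives $c \cdot cy^{q'} \in I^{[qq']}$, i.e., $c^2 y^{q'} \in (\frq{I})^{[q']}$ for every $q'$. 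Because $c^2$ avoids every minimal prime, this witnesses $y \in (\frq{I})^*$, as desired.

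I expect the main obstacle to be precisely this last move. The natural temptation is to unpack $cy^{q'} \in (I^{[qq']})^*$ via the test element at general $Q$, which produces $c^{Q+1} y^{q'Q} \in I^{[qq'Q]}$; the exponent on $c$ then grows with $Q$ and no single multiplier avoiding every minimal prime survives. Taking $Q = 1$ collapses the argument in one stroke, and recognizing that the test element gives a usable conclusion already at $Q = 1$ is the observation that drives the proof.
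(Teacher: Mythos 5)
Your proof is correct and takes essentially the same approach as the paper: the paper's main step is (c) $\Rightarrow$ (a), while yours is (b) $\Rightarrow$ (a) using implicitly $I_{q} = \frq{I}:x^\infty$, but the substance is identical — two applications of the test element, with the second at $q'=1$ to produce the witness $c^2$, exactly the trick you single out. The remaining implications (a) $\Rightarrow$ (b) and (b) $\Leftrightarrow$ (c) you handle formally, as does the paper.
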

\begin{proof}
$(a) \Rightarrow (b)$ since $\frq{I}: x^\infty \subseteq (\frq{I})^*: x^\infty$. $(b)\Rightarrow (c)$ 
by taking $I_q = \frq{I} : x^\infty$. 
For the last implication, we observe that if $ax \in (\frq{I})^*$ for some $q$, then $ca^{q'}x^{q'} \in I^{[qq']}$
for all $q' \gg 0$. But, since $x^{q'}$ is not a zero divisor modulo $I_{qq'}$,
\[
ca^{q'} \in I_{qq'} \subseteq (I^{[qq']})^*.
\]
Since $R$ has a test element, these equations imply that $a \in (\frq{I})^*$. 
\end{proof}

\begin{corollary}\label{tc primarity}
Let $(R, \mf m)$ be a local ring of positive characteristic and $\mf p$ be a prime ideal. 
If $R$ has a test element $c$, then the following are equivalent:
\begin{enumerate}
\item[(a)] $(\frq{\mf p})^*$ is $\mf p$-primary for any $q$,
\item[(b)] $\frq{\mf p}R_\mf p \cap R \subseteq (\frq{\mf p})^*$ for all $q$,
\item[(c)] for all $q$ there exist $\mf p$-primary ideals $I_q$ such that
$\frq{\mf p} \subseteq I_q \subseteq (\frq{\mf p})^*$.

\end{enumerate}
\end{corollary}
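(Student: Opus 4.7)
The plan is to derive this corollary by applying Lemma~\ref{tc nzd} to each element $x \notin \mf p$, exploiting the fact that $\mf p$-primary is equivalent to ``every $x \notin \mf p$ is a non-zero-divisor on $R/(\frq{\mf p})^*$, together with the radical being $\mf p$.''

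First I would record the basic observation that $\sqrt{(\frq{\mf p})^*} = \mf p$, which holds because tight closure is always contained in integral closure and the radical is invariant under integral closure. With this in hand, $(\frq{\mf p})^*$ being $\mf p$-primary is equivalent to: for every $x \notin \mf p$, $x$ is a non-zero-divisor on $R/(\frq{\mf p})^*$. This reduction is what lets us apply Lemma~\ref{tc nzd} element-by-element.

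For $(a) \Leftrightarrow (c)$: the direction $(a)\Rightarrow(c)$ is immediate by taking $I_q = (\frq{\mf p})^*$. Conversely, given $x \notin \mf p$ and the $\mf p$-primary ideals $I_q$ sandwiched between $\frq{\mf p}$ and $(\frq{\mf p})^*$, the element $x$ is a non-zero-divisor mod each $I_q$ (by $\mf p$-primarity), so condition (c) of Lemma~\ref{tc nzd} is satisfied for this $x$; hence $x$ is a non-zero-divisor mod $(\frq{\mf p})^*$ for all $q$. Since $x$ was arbitrary outside $\mf p$, we conclude $(\frq{\mf p})^*$ is $\mf p$-primary.

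For $(a) \Leftrightarrow (b)$: if $(\frq{\mf p})^*$ is $\mf p$-primary and $r \in \frq{\mf p}R_\mf p \cap R$, then $rx \in \frq{\mf p} \subseteq (\frq{\mf p})^*$ for some $x \notin \mf p$, and $\mf p$-primarity forces $r \in (\frq{\mf p})^*$. For the converse, I would use the identification
\[
\frq{\mf p} R_\mf p \cap R \;=\; \bigcup_{x \notin \mf p} (\frq{\mf p} : x^\infty),
\]
so assuming (b), for every fixed $x \notin \mf p$ one has $\frq{\mf p} : x^\infty \subseteq (\frq{\mf p})^*$ for all $q$, which is condition (b) of Lemma~\ref{tc nzd}; therefore $x$ is a non-zero-divisor mod $(\frq{\mf p})^*$ for every $q$, and $(\frq{\mf p})^*$ is $\mf p$-primary.

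The only delicate point is remembering to pin down $\sqrt{(\frq{\mf p})^*}=\mf p$ so that ``non-zero-divisor for every $x\notin\mf p$'' truly upgrades to ``$\mf p$-primary''; once that is in place, the corollary is essentially a repackaging of Lemma~\ref{tc nzd} applied uniformly over $x\notin\mf p$, and no further tight-closure machinery is needed.
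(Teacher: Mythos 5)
Your proof is correct and takes essentially the same route as the paper: both reduce $\mf p$-primarity to the statement that every $x \notin \mf p$ is a non-zero-divisor modulo $(\frq{\mf p})^*$ (using $\sqrt{(\frq{\mf p})^*} = \mf p$) and then invoke Lemma~\ref{tc nzd} element-by-element. You close the cycle of implications slightly differently --- the paper proves $(b)\Rightarrow(c)\Rightarrow(a)\Rightarrow(b)$ while you prove $(a)\Leftrightarrow(c)$ and $(a)\Leftrightarrow(b)$, your $(b)\Rightarrow(a)$ going directly through part (b) of Lemma~\ref{tc nzd} via the identity $\frq{\mf p}R_\mf p \cap R = \bigcup_{x\notin\mf p}(\frq{\mf p}:x^\infty)$ --- but the content is the same.
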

\begin{proof}
Clearly, $I_q = \frq{\mf p}R_\mf p \cap R $ yields $(b) \Rightarrow (c)$.

For an ideal $I$ such that $\sqrt{I} = \mf p$, we can characterize its $\mf p$-primary part as 
the smallest among the ideals containing $I$ 
and such that any element $x \notin \mf p$ is not a zero divisor modulo that ideal.
Thus, for $(c) \Rightarrow (a)$, we note that $\frq{\mf p}: x^\infty \subseteq I_q$ and use the lemma above.

For the last implication, we just note that $\frq{\mf p}R_\mf p \cap R \subseteq (\frq{\mf p})^*R_\mf p \cap R$.

\end{proof}

\subsection {Basic properties of multiplicities}

Let us overview some properties of multiplicities and Hilbert--Kunz multiplicities that 
will be used throughout the text. First, there is a very useful additivity formula (\cite{Lech}, \cite[(2.3)]{WatanabeYoshida}).

\begin{lemma}[Additivity formula]\label{ass formula}
Let $(R, \mf m)$ be a local ring of dimension $d$ and $I$ an arbitrary $\mf m$-primary ideal.
Then 
\[\eh (I) = \sum_{P \in \Minh R} \eh(IR/P) \length_{R_P} (R_P),\]
where the sum is taken over all primes $P$ such that $\dim R/P = \dim R$.

Similarly, if $R$ has positive characteristic, then
\[\ehk (I) = \sum_{P \in \Minh R} \ehk(IR/P) \length_{R_P} (R_P).\]
\end{lemma}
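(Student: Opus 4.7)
The plan is to prove both formulas in parallel by a prime filtration argument, combining additivity of the multiplicity function with a counting step for minimal primes appearing in the filtration.

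First I would fix a prime filtration $0 = M_0 \subset M_1 \subset \cdots \subset M_n = R$ with $M_i / M_{i-1} \cong R/P_i$, which exists since $R$ is Noetherian. For the counting step, fix $P \in \Minh R$ and note that $P$ must in fact be a minimal prime of $R$, since a strict specialization $Q \subsetneq P$ would give $\dim R/Q > \dim R/P = \dim R$. Localizing the filtration at $P$, we have $(R/P_i)_P = 0$ unless $P_i \subseteq P$, in which case minimality of $P$ forces $P_i = P$ and the localized quotient is the residue field of $R_P$. The localized filtration is thus a composition series of $R_P$, so
$$\length_{R_P}(R_P) = \#\{i : P_i = P\}.$$

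Next I would set up additivity. Both $\eh(I,-)$ and $\ehk(I,-)$, normalized respectively by $n^{\dim R}/(\dim R)!$ and by $q^{\dim R}$, are additive on short exact sequences of finitely generated $R$-modules, with contributions of summands of dimension $< \dim R$ vanishing in the limit (for Hilbert-Samuel by the classical polynomial bound, for Hilbert-Kunz by Monsky's theorem that $\length(N/\frq{I}N) = O(q^{\dim N})$). Applying additivity inductively along the filtration yields
$$\eh(I) = \sum_{i=1}^n \eh(I, R/P_i), \qquad \ehk(I) = \sum_{i=1}^n \ehk(I, R/P_i),$$
where in each sum only those $P_i$ with $\dim R/P_i = \dim R$, i.e., $P_i \in \Minh R$, contribute nontrivially. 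Combining with the counting step gives the two stated formulas.

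The main obstacle will be the additivity step for Hilbert-Kunz, which is not as immediate as for Hilbert-Samuel. Tensoring $0 \to M_{i-1} \to M_i \to R/P_i \to 0$ with $R/\frq{I}$ is only right-exact, so one must control the kernel term $(M_{i-1} \cap \frq{I}M_i)/\frq{I}M_{i-1}$. The bound $O(q^{\dim R/P_i})$ on the length of this kernel, together with the fact that the filtration has a fixed length $n$ independent of $q$, ensures that the accumulated error is $o(q^{\dim R})$; this estimate is essentially Monsky's original additivity argument for Hilbert-Kunz multiplicity, which I would invoke rather than reprove.
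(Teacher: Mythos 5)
The paper states this lemma without proof, as it is a classical fact (for Hilbert–Samuel multiplicity a textbook result, and for Hilbert–Kunz multiplicity due to Monsky's original paper), so there is no in-paper argument to compare against. Your proof—prime filtration of $R$, the counting step at a minimal prime, and additivity of both normalized multiplicities—is the standard argument and is structurally correct. The counting step in particular is clean: a prime $P \in \Minh R$ is automatically a minimal prime of $R$, so localizing the filtration at $P$ annihilates every factor $R/P_i$ with $P_i \neq P$ and turns each factor with $P_i = P$ into the residue field of the Artinian ring $R_P$; this gives $\length_{R_P}(R_P) = \#\{i : P_i = P\}$ as you claim.

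The one place to be careful is your closing error estimate. The kernel $(M_{i-1} \cap \frq{I}M_i)/\frq{I}M_{i-1}$ is a quotient of $\operatorname{Tor}_1^R(R/\frq{I}, R/P_i)$ and hence is annihilated by $P_i$; for those $i$ with $\dim R/P_i < \dim R$ a bound of the form $O(q^{\dim R/P_i})$ is indeed $o(q^{\dim R})$ and harmless. But the filtration also contains factors $R/P_i$ with $P_i \in \Minh R$, where $\dim R/P_i = \dim R$, and for those your asserted bound is only $O(q^{\dim R})$—not small enough to be negligible. That the error at \emph{those} steps is in fact $o(q^{\dim R})$ is not an elementary estimate feeding into additivity; it is essentially equivalent to Monsky's additivity theorem itself, whose actual proof proceeds differently (reduction to a torsion-free module over a domain and comparison with a free module of the same rank). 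Since you ultimately invoke Monsky's additivity as a black box rather than deriving it from the kernel estimate, the argument is logically complete; I would simply drop the suggestion that the kernel bound \emph{is} Monsky's argument and instead cite additivity of $\ehk$ directly.
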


In this paper we will often work with parameter ideals. 
To make our notation less cumbersome, in the following 
for  a system of parameters  $x_1, \ldots, x_m \in R$ modulo $I$ we are going to write 
$\ehk(I, x_1, \ldots, x_m)$ instead of $\ehk((I, x_1, \ldots, x_m))$.

It is well-known that the multiplicity of a regular sequence can be computed as a colength.

\begin{proposition}\label{hs param}
Let $(R, \mf m)$ be a local ring of dimension $d$, $x_1, \ldots, x_r$ a system of parameters,
and $I$ an arbitrary $\mf m$-primary ideal. 
Then $\length \left (R/(x_1, \ldots, x_r) \right ) \geq \eh(x_1, \ldots, x_r)$. 

Moreover, $\length \left (R/ (x_1, \ldots, x_r) \right ) = \eh(x_1, \ldots, x_r)$ 
if and only if $x_1, \ldots, x_r$ form a regular sequence.
\end{proposition}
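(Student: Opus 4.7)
The plan is to reduce the problem to a graded-ring computation and then invoke Serre's Koszul-complex characterization of regular sequences. Since $x_1,\ldots,x_r$ is a system of parameters one has $r = d$; set $J = (x_1,\ldots,x_d)$, which is $\mf m$-primary so both $\length(R/J)$ and $\eh(J)$ are finite, and recall that $\eh(J) = \lim_{n\to\infty} \frac{d!}{n^d}\length(R/J^{n+1})$. The auxiliary ideal $I$ in the statement plays no role in the formula and I would simply ignore it.

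For the inequality, consider the canonical surjection of graded $R/J$-algebras
\[
\phi\colon (R/J)[T_1,\ldots,T_d] \twoheadrightarrow \operatorname{gr}_J(R) = \bigoplus_{n\ge 0} J^n/J^{n+1},\qquad T_i \mapsto \overline{x_i}.
\]
In degree $n$ the source has length $\binom{n+d-1}{d-1}\length(R/J)$, so $\length(J^n/J^{n+1}) \le \binom{n+d-1}{d-1}\length(R/J)$. Summing $n=0,\ldots,N$ gives $\length(R/J^{N+1}) \le \binom{N+d}{d}\length(R/J)$, and dividing by $N^d/d!$ and passing to the limit yields $\eh(J) \le \length(R/J)$.

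For the ``if'' direction of the equality, suppose $x_1,\ldots,x_d$ is a regular sequence. Then $\phi$ is an isomorphism (a regular sequence is quasi-regular, and its associated graded ring is a polynomial ring over $R/J$). Hence $\length(R/J^{n+1}) = \binom{n+d}{d}\length(R/J)$ exactly, and $\eh(J) = \length(R/J)$.

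The converse is the main obstacle: from the single numerical equality $\eh(J) = \length(R/J)$ one must extract a structural statement about $R$. I would handle it using the Koszul complex $K_\bullet = K_\bullet(x_1,\ldots,x_d;R)$. By Serre's formula, $\eh(J) = \chi(K_\bullet) = \sum_i (-1)^i \length(H_i(K_\bullet))$, so
\[
\length(R/J) - \eh(J) = \length(H_0(K_\bullet)) - \chi(K_\bullet) = \chi_1(K_\bullet),
\]
the first partial Euler characteristic. Serre's theorem gives $\chi_1 \ge 0$ with equality iff $H_i(K_\bullet) = 0$ for all $i \ge 1$, i.e.\ iff $x_1,\ldots,x_d$ is a regular sequence. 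The delicate point is precisely the non-negativity and vanishing criterion for $\chi_1$, which is where all the work lies; the rest of the argument is bookkeeping.
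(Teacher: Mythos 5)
The paper records this proposition as a standard preliminary fact without supplying a proof, so there is no argument in the paper to compare against; the question is only whether your argument is sound, and it is. You are right that the ideal $I$ in the statement plays no role --- it is an artifact of the formulation --- and ignoring it is correct. Your treatment of the inequality via the surjection $(R/J)[T_1,\ldots,T_d]\twoheadrightarrow \bigoplus_n J^n/J^{n+1}$, and of the ``if'' direction via quasi-regularity of a regular sequence, is clean and complete. For the ``only if'' direction you reduce to two inputs: Serre's multiplicity formula $\eh(J)=\chi\bigl(K_\bullet(x_1,\ldots,x_d;R)\bigr)$, and the theorem that the first partial Euler characteristic $\chi_1$ of the Koszul complex on a system of parameters is nonnegative and vanishes exactly when all higher Koszul homology vanishes, equivalently when the sequence is regular. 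That is indeed a correct and standard reduction; the only quibble is attribution, since the nonnegativity and rigidity of $\chi_1$ is usually credited to Auslander--Buchsbaum rather than to Serre (Serre's contribution being the identity $\eh=\chi$); see, for instance, Bruns--Herzog, Theorem 4.7.10. With that citation tidied up, your proof is complete and correct.
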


It was shown by Lech (\cite{Lech}) that for a parameter ideal $(x_1, \ldots, x_d)$, 
its multiplicity can be computed by the formula
\[
\eh \left(x_1, \ldots, x_r \right) = \lim_{\min (n_i) \to \infty} 
\frac{\length (R/(x_1^{n_1}, \ldots, x_d^{n_d}))}{n_1 \cdots n_d}.
\]
This easily implies two very useful corollaries.

\begin{corollary}\label{ehk parameter}
Let $(R, \mf m)$ be a local ring of characteristic $p > 0$ and let $x_1, \ldots, x_d$ be a system of parameters.
Then 
\[\ehk(x_1, \ldots, x_d) = \eh(x_1, \ldots, x_d).\]
\end{corollary}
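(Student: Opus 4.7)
The plan is to derive this as an immediate consequence of Lech's formula, which was quoted in the paragraph immediately preceding the statement.

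First I would observe that for the parameter ideal $I = (x_1, \ldots, x_d)$, the Frobenius power $\frq{I}$ coincides with $(x_1^q, \ldots, x_d^q)$: the containment $\supseteq$ is obvious, and the reverse follows because any element of $I$ is an $R$-linear combination of the $x_i$, so its $q$-th power expands (after noting that cross terms involving mixed monomials lie in $(x_1^q, \ldots, x_d^q)$ because each monomial of degree $q$ in $d$ variables must have some variable appearing to a power $\geq q/d$, but more simply because the $q$-th power map is additive up to coefficients in characteristic $p$ only when $q = p^e$, and here $(a_1 x_1 + \cdots + a_d x_d)^q = a_1^q x_1^q + \cdots + a_d^q x_d^q$). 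Thus
\[
\ehk(I) = \lim_{q \to \infty} \frac{\length\bigl(R/(x_1^q, \ldots, x_d^q)\bigr)}{q^d},
\]
where the limit is taken along the subsequence $q = p^e$.

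Next I would apply Lech's formula with the specialization $n_1 = n_2 = \cdots = n_d = n$. Since the full limit as $\min(n_i) \to \infty$ exists and equals $\eh(I)$, any subsequential limit agrees with it; in particular, restricting to the diagonal subsequence and then further to $n = q = p^e$ yields
\[
\eh(I) = \lim_{q \to \infty} \frac{\length\bigl(R/(x_1^q, \ldots, x_d^q)\bigr)}{q^d}.
\]
Comparing the two displayed formulas gives $\ehk(I) = \eh(I)$, which is the claim.

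There is essentially no obstacle here; the only mild subtlety is the passage from Lech's general limit (over tuples $(n_1, \ldots, n_d)$ with $\min n_i \to \infty$) to the single-index limit along powers of $p$, but this is automatic once the unrestricted limit is known to exist. The identification $\frq{(x_1, \ldots, x_d)} = (x_1^q, \ldots, x_d^q)$ uses the Frobenius additivity available only because $q$ is a power of the characteristic, but this is exactly the regime we are working in.
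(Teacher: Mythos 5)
Your proof is correct and is exactly the argument the paper has in mind — the paper states Lech's formula and then says it ``easily implies'' the corollary without writing out the details, and your write-up supplies precisely those details: identify $\frq{I}$ with $(x_1^q,\ldots,x_d^q)$ via Frobenius additivity, then specialize Lech's limit to the diagonal subsequence $n_1=\cdots=n_d=q=p^e$, using that the existence of the full multi-index limit forces every subsequential limit to agree with it.

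One small blemish worth fixing: the first clause of your parenthetical (``each monomial of degree $q$ in $d$ variables must have some variable appearing to a power $\geq q/d$'') is both false as a reason for membership in $(x_1^q,\ldots,x_d^q)$ and unnecessary — a monomial like $x_1^{q/2}x_2^{q/2}$ does not lie in $(x_1^q,x_2^q)$, so a pigeonhole argument cannot work here. You correctly fall back on the Frobenius endomorphism: $(a_1x_1+\cdots+a_dx_d)^q = a_1^q x_1^q + \cdots + a_d^q x_d^q$ because $q$ is a power of the characteristic. That is the whole argument (and in fact shows $\frq{(a_1,\ldots,a_n)} = (a_1^q,\ldots,a_n^q)$ for any finitely generated ideal, not only parameter ideals); the failed pigeonhole attempt should simply be deleted.
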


\begin{corollary}\label{multiplicity of param power}
Let $(R, \mf m)$ be a local ring and let $x_1, \ldots, x_d$ be a system of parameters.
Then for any vector $(n_1, \ldots, n_d) \in \mathbb{N}^d$
\[
\eh (x_1^{n_1},\ldots, x_d^{n_d}) = n_1\cdots n_d \eh (x_1, \ldots, x_d).
\]
\end{corollary}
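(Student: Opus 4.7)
The plan is a short double application of Lech's limit formula stated just above the corollary.

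First I would observe that since $x_1, \ldots, x_d$ is a system of parameters, so is $x_1^{n_1}, \ldots, x_d^{n_d}$ (they generate an $\mf m$-primary ideal in the same $d$-dimensional local ring). Consequently Lech's formula applies to this new parameter system: raising the generators to powers $k_1, \ldots, k_d$ gives
\[
\eh((x_1^{n_1}, \ldots, x_d^{n_d})) = \lim_{\min(k_i) \to \infty} \frac{\length\bigl(R/(x_1^{n_1 k_1}, \ldots, x_d^{n_d k_d})\bigr)}{k_1 \cdots k_d}.
\]

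Next I would apply Lech's formula directly to $x_1, \ldots, x_d$ but indexed along the same sequence of exponents $m_i := n_i k_i$. Since $n_i \geq 1$, $\min(m_i) \to \infty$ whenever $\min(k_i) \to \infty$, so
\[
\eh((x_1, \ldots, x_d)) = \lim_{\min(k_i) \to \infty} \frac{\length\bigl(R/(x_1^{n_1 k_1}, \ldots, x_d^{n_d k_d})\bigr)}{n_1 k_1 \cdots n_d k_d}.
\]

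Dividing the two displayed equalities yields
\[
\frac{\eh((x_1^{n_1}, \ldots, x_d^{n_d}))}{n_1 \cdots n_d} = \eh((x_1, \ldots, x_d)),
\]
which is exactly the claim. There is no real obstacle here; the only thing one needs to verify is the legitimate substitution $m_i = n_i k_i$ in Lech's formula, which is immediate because the limit defining $\eh$ depends only on the $m_i$ going to infinity, and multiplication by the fixed positive integers $n_i$ preserves that condition.
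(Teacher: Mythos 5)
Your proof is correct and is exactly the argument the paper has in mind: the paper states Lech's limit formula and then presents this corollary as an immediate consequence without further proof, and your double application of the formula (to the new parameter system, then along the subnet $m_i = n_i k_i$) is the natural way to extract it. The subsequence observation you flag is the only point requiring care, and you handle it correctly.
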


 Lech also used this observation to obtain the following associativity formula for parameter ideals (\cite[Theorem~1]{Lech}, \cite[(24.7), (34.5)]{Nagata}).

\begin{proposition}\label{param ass formula}
Let $(R, \mf m)$ be a local ring and let $x_1, \ldots, x_d$ be a system of parameters.
Then for any $0 \leq i \leq d$
\[
\eh(x_1,\ldots, x_d) = \sum_{P} \eh((x_{i+1}, \ldots, x_d), R/P) \eh((x_1, \ldots, x_i)R_P),
\]
where the sum is taken over all $P \in \Minh ((x_1, \ldots, x_i))$ such that $\dim R_P = i$.
In particular, if $R$ is formally equidimensional\footnote{The published version is missing this assumption, but this does not affect the results as we apply the proposition only to formally equidimensional rings.}, then the sum is taken over 
$P \in \Minh ((x_1, \ldots, x_i))$.
\end{proposition}



\section{Equimultiplicity for Hilbert--Kunz functions}\label{equimultiplicity}

First recall that the sequence  
\[q \to \hkf (\mf m) := \frac{1}{q^{\dim R}} \length(R/\frq{\mf m}R),
\]
is called the (normalized) Hilbert--Kunz function of a local ring $(R, \mf m)$.
As a first step, let us characterize for what prime ideals $\mf p$ 
the entire Hilbert--Kunz functions of $R_\mf p$ and $R$ coincide.
The presented argument is based on the proof of \cite[Theorem~3.3]{HunekeYao}.




\begin{proposition}\label{HK functions}
Let $(R, \mf m)$ be a local ring of characteristic $p > 0$ 
and $\mf p$ be a prime ideal of $R$ such that $R/\mf p$ is regular and $\hght \mf p  + \dim R/\mf p = \dim R$.
Then, for a fixed $q$, $\hkf (\mf m) = \hkf (\mf p)$ if and only if $R/\frq{\mf p}$ is Cohen-Macaulay.

Therefore the (normalized) Hilbert--Kunz functions of $\mf m$ and $\mf p$ coincide for all $q$
if and only if $R/\frq{\mf p}$ are Cohen-Macaulay for all $q$. 
\end{proposition}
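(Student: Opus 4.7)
The plan is to relate the two normalized Hilbert-Kunz functions by a direct length computation after reducing modulo $\frq{\mf p}$. Set $s = \dim R/\mf p$ and $h = \hght \mf p$, so $s + h = \dim R$. Since $R/\mf p$ is regular, I would lift a regular system of parameters of $\mf m/\mf p$ to elements $y_1,\ldots,y_s \in R$, so that $\mf m = \mf p + (y_1,\ldots,y_s)$. In characteristic $p$ this immediately gives
\[
\frq{\mf m} = \frq{\mf p} + (y_1^q,\ldots,y_s^q).
\]
Set $A := R/\frq{\mf p}$. Because $\mf p$ is prime we have $\sqrt{\frq{\mf p}} = \mf p$, so $\dim A = s$ and the images of $y_1^q,\ldots,y_s^q$ form a system of parameters in the local ring $A$. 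The key identification is then
\[
\length\bigl(R/\frq{\mf m}\bigr) = \length_A\!\bigl(A/(y_1^q,\ldots,y_s^q)A\bigr).
\]

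Next I would apply Proposition~\ref{hs param} to $A$: the right-hand length is at least $\eh((y_1^q,\ldots,y_s^q)A)$, with equality if and only if $y_1^q,\ldots,y_s^q$ forms a regular sequence on $A$. Since this is a system of parameters in a local ring, a regular-sequence condition is equivalent to $A = R/\frq{\mf p}$ being Cohen-Macaulay (and using that $(y_1,\ldots,y_s)$ is a regular sequence iff $(y_1^q,\ldots,y_s^q)$ is). Corollary~\ref{multiplicity of param power} rewrites the multiplicity as
\[
\eh\bigl((y_1^q,\ldots,y_s^q)A\bigr) = q^{s}\,\eh\bigl((y_1,\ldots,y_s)A\bigr).
\]

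To evaluate the remaining multiplicity I would invoke the associativity formula (Lemma~\ref{ass formula}) on $A$. The only minimal prime of $A$ is $\mf pA$, and it has dimension $s = \dim A$, so the sum collapses to a single term:
\[
\eh\bigl((y_1,\ldots,y_s)A\bigr) = \eh\bigl((y_1,\ldots,y_s)(R/\mf p)\bigr) \cdot \length_{A_{\mf pA}}\bigl(A_{\mf pA}\bigr).
\]
The first factor equals $1$ because $R/\mf p$ is regular with regular parameters $\overline{y_1},\ldots,\overline{y_s}$, and the second factor equals $\length_{R_\mf p}\bigl(R_\mf p/\frq{\mf p}R_\mf p\bigr)$. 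Combining everything,
\[
\length(R/\frq{\mf m}) \;\geq\; q^{s}\,\length_{R_\mf p}\bigl(R_\mf p/\frq{\mf p}R_\mf p\bigr),
\]
with equality exactly when $R/\frq{\mf p}$ is Cohen-Macaulay. Dividing by $q^{s+h} = q^{\dim R}$ turns this into $\hkf(\mf m) \geq \hkf(\mf p)$ with the advertised equality criterion; the ``for all $q$'' statement then follows term by term.

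The argument is essentially a bookkeeping exercise once the right intermediate ring $A = R/\frq{\mf p}$ is chosen, and there is no serious obstacle. The only point requiring a little care is the reduction ``regular sequence of $q$-th powers $\Leftrightarrow$ Cohen-Macaulayness of $A$''; this uses that $y_1,\ldots,y_s$ and $y_1^q,\ldots,y_s^q$ generate the same radical and are systems of parameters in the local ring $A$, so the standard equivalence of regular sequence and Cohen-Macaulay for a system of parameters applies.
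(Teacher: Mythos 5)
Your proof is correct and follows essentially the same route as the paper's: choose generators of $\mf m$ modulo $\mf p$, observe that $\frq{\mf m} = \frq{\mf p} + (y_1^q,\ldots,y_s^q)$, apply the associativity formula (which collapses because $\mf pA$ is the unique minimal prime of $A = R/\frq{\mf p}$ and $\eh((y_1,\ldots,y_s), R/\mf p) = 1$ by regularity), and then invoke Proposition~\ref{hs param} to convert the colength-versus-multiplicity gap into a Cohen--Macaulay criterion. The only cosmetic difference is that you frame the computation in the ring $A$ while the paper treats $R/\frq{\mf p}$ as an $R$-module; the parenthetical about $(y_i)$ being regular iff $(y_i^q)$ is regular is unnecessary, since Proposition~\ref{hs param} applied directly to the system of parameters $y_1^q,\ldots,y_s^q$ of $A$ already gives the equivalence with Cohen--Macaulayness.
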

\begin{proof}
Let $x_1, \ldots, x_m$ be a minimal system of generators of $\mf m$ modulo $\mf p$.
By the additivity formula and Corollary~\ref{multiplicity of param power},
\[
\eh(\frq{(x_1, \ldots, x_m)}, R/\frq{\mf p}) = \eh(\frq{(x_1, \ldots, x_m)}, R/\mf p)
\length_{R_{\mf p}}(R_{\mf p}/\frq{\mf p}R_{\mf p})=
q^{m} \length_{R_{\mf p}}(R_{\mf p}/\frq{\mf p}R_{\mf p}) 
.\]
Since $q^{m} \length_{R_{\mf p}}(R_{\mf p}/\frq{\mf p}R_{\mf p}) = q^{\dim R} \hkf(\mf p)$, 
we obtain by Proposition~\ref{hs param} that
\[
q^{\dim R}\hkf(\mf m) = \length(R/(\frq{\mf p}, x_1^q, \ldots, x_m^q)) \geq 
\eh(\frq{(x_1, \ldots, x_m)}, R/\frq{\mf p}) = q^{\dim R} \hkf(\mf p).
\] 
Thus, $\hkf(\mf m) = \hkf(\mf p)$  if and only if  
$\length(R/(\frq{\mf p}, x_1^q, \ldots, x_m^q)) = \eh(\frq{(x_1, \ldots, x_m)}, R/\frq{\mf p})$.
However, by Proposition~\ref{hs param}, 
the latter holds if and only if $R/\frq{\mf p}$ is Cohen-Macaulay.
\end{proof}

While the characterization is simple and natural, it is not clear 
whether we could force this condition by inverting an element. 
For a fixed $q$ this condition holds on an open set $D(s_q)$ because
the Cohen-Macaulay locus is open in an excellent ring.
However, we will need to intersect infinitely many open sets $D(s_q)$ to force it for all $q$.

This is why we need to go further and try to characterize equality of Hilbert--Kunz multiplicities.
It will be much harder to achieve, but 
having a better control over Hilbert--Kunz multiplicity, we will learn in Proposition~\ref{HK counterexample} that, in fact, 
the conditions of Proposition~\ref{HK functions} cannot be forced for all $q$ by inverting a single element.

Last, we remark that this characterization
can be used to give a different proof of the following classical theorem that was first proved by
Kunz in \cite[Corollary~3.4]{Kunz2} (and another simple proof is due to Shepherd-Barron in \cite{ShepherdBarron}).

\begin{theorem}\label{qupsemi}
If $R$ is an excellent locally equidimensional ring, then for any fixed $q$ 
the $q$th Hilbert--Kunz function $\hkf$ is locally constant on $\Spec R$.
\end{theorem}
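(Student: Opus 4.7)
The plan is to apply Proposition~\ref{HK functions} to show that for each prime $\mf p \in \Spec R$ there is an element $s \notin \mf p$ such that $\hkf(\mf q) = \hkf(\mf p)$ for every prime $\mf q \in D(s) \cap V(\mf p)$. Since $\hkf(\mf p)$ is intrinsic to the local ring $R_\mf p$, for such a $\mf q$ it suffices to verify the three hypotheses of Proposition~\ref{HK functions} for the prime $\mf p R_\mf q$ inside the local ring $(R_\mf q, \mf q R_\mf q)$ and then read off the conclusion $\hkf(\mf q) = \hkf(\mf p)$.

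First I would handle the dimension equality. Because $R$ is excellent it is catenary, and the local equidimensionality assumption then forces every $R_\mf q$ to be catenary and equidimensional, which is exactly what is needed for $\hght(\mf p R_\mf q) + \dim R_\mf q/\mf p R_\mf q = \dim R_\mf q$ for all $\mf q \supseteq \mf p$. Next I would produce the element $s$ from two openness statements that excellence supplies. The regular locus of $R/\mf p$ is open in $V(\mf p) = \Spec R/\mf p$, and it contains the generic point $\mf p$ itself because $R_\mf p/\mf p R_\mf p$ is a field; so there is $s_1 \notin \mf p$ with $R_\mf q/\mf p R_\mf q$ regular for every $\mf q \in D(s_1) \cap V(\mf p)$. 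Similarly, the Cohen-Macaulay locus of $R/\frq{\mf p}$ is open in $V(\frq{\mf p}) = V(\mf p)$ and contains $\mf p$ because $(R/\frq{\mf p})_\mf p$ is artinian; so there is $s_2 \notin \mf p$ with $R_\mf q/\frq{\mf p}R_\mf q$ Cohen-Macaulay for every $\mf q \in D(s_2) \cap V(\mf p)$. Taking $s = s_1 s_2$, all three hypotheses of Proposition~\ref{HK functions} are met for $\mf p R_\mf q \subseteq R_\mf q$, and the equivalence in that proposition delivers $\hkf(\mf q) = \hkf(\mf p)$.

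The main obstacle, if one may call it that, is the openness of the Cohen-Macaulay locus of $R/\frq{\mf p}$: the catenarity plus excellence hypothesis is exactly what keeps this locus open, and this is why ``excellent locally equidimensional'' is the right hypothesis rather than, say, ``formally equidimensional.'' Observe also that the element $s = s_1 s_2$ constructed above depends on $q$, which is precisely the limitation emphasized in the paragraph preceding the theorem: the CM-locus argument cannot be run uniformly in $q$, and this failure is what forces the more elaborate equimultiplicity theory of the following sections.
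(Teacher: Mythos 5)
Your proof is correct and supplies exactly the argument the paper alludes to: the paper states Theorem~\ref{qupsemi} without proof, and the preceding paragraph already explains that for fixed $q$ the Cohen--Macaulay condition of Proposition~\ref{HK functions} holds on an open set $D(s_q)$ because the Cohen--Macaulay locus is open in an excellent ring---which is precisely what you formalize by combining the open regular locus of $R/\mf p$ with the open Cohen--Macaulay locus of $R/\frq{\mf p}$, after settling the dimension hypothesis via catenarity and local equidimensionality. Your closing observation that $s = s_1 s_2$ depends on $q$ correctly echoes the paper's own caveat, though one small misattribution in your last paragraph is worth flagging: openness of the CM locus follows from excellence alone, whereas catenarity together with local equidimensionality is what secures $\hght(\mf p R_\mf q) + \dim R_\mf q/\mf p R_\mf q = \dim R_\mf q$, so the two halves of the hypothesis play separate roles rather than jointly guaranteeing the CM-locus openness.
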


\section{A uniform convergence result}\label{HK equi convergence}

Before proceeding to technicalities, let us sketch the ideas of the proof.
Over a local ring $(R, \mf m)$ we are going to prove uniform convergence (with respect to $q$) of the bisequence
\[
\frac{\length (M/(\frq{I} + J^{[qq']})M)}{q^{\dim R}q'^{\dim R/I}}
\]
where $I$ is an ideal, $M$ is a finitely generated module, and $J$ is an $\mf m$-primary ideal. 
This allows us to interchange the limits (with respect to $q$ and $q'$) of the bisequence\footnote{Neil Epstein informed me that a more general change of limits formula was independently obtained in his joint work with
Yongwei Yao.}.

Uniform convergence will be established by showing that the sequence is Cauchy with an appropriate estimate; 
to do the bookkeeping of estimates we follow Tucker's treatment in \cite{Tucker}. 
Tucker's proof can be viewed as a careful adaptation of the original proof by Monsky (\cite{Monsky}). 

We start with an upper bound for a function that we study.

\begin{lemma}\label{boundlemma}
Let $(R, \mf m)$ be a local ring and $I$ be an ideal.
Then there exists a constant $C$ such that
\[
\length (R/(\frq{I}+\mf m^{[qq']})) \leq C q'^{\dim R/I}q^{\dim R}
\]
for all $q, q'$.
\end{lemma}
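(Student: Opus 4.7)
The plan is as follows. Set $d = \dim R$, $s = \dim R/I$, and write $\mf m = (y_1, \ldots, y_n)$. I would first use prime avoidance to produce elements $x_1, \ldots, x_s \in \mf m$ satisfying $\dim R/(x_1, \ldots, x_i) = d - i$ and $\dim R/(I, x_1, \ldots, x_i) = s - i$ at each stage $i = 1, \ldots, s$. At the $i$th step the forbidden primes are the finitely many minimal primes of $R/(x_1, \ldots, x_{i-1})$ of dimension $d - i + 1$ together with those of $R/(I, x_1, \ldots, x_{i-1})$ of dimension $s - i + 1$; all have positive dimension, hence are non-maximal, so ordinary prime avoidance produces the required $x_i \in \mf m$ with no residue-field assumption. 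This yields that $(I, x_1, \ldots, x_s)$ is $\mf m$-primary and $\tilde R := R/(x_1, \ldots, x_s)$ has dimension exactly $d - s$.

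Second, I would exploit the pigeonhole containment $\mf m^{nqq'} \subseteq \mf m^{[qq']}$ (every degree-$nqq'$ monomial in $n$ variables has some exponent $\geq qq'$), which gives $x_i^{nqq'} \in \mf m^{[qq']}$ and hence
\[
\length\!\bigl(R/(\frq{I}+\mf m^{[qq']})\bigr) \leq \length\!\bigl(S/(\bar x_1^{nqq'}, \ldots, \bar x_s^{nqq'})S\bigr),
\]
where $S := R/\frq{I}$ has dimension $s$ and the images $\bar x_i$ form a system of parameters in $S$ (since $\sqrt{(\frq{I}, x_1, \ldots, x_s)} = \sqrt{(I, x_1, \ldots, x_s)} = \mf m$). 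The next input is a Lech-type colength inequality: for any system of parameters $w_1, \ldots, w_s$ in an $s$-dimensional local ring $A$,
\[
\length\!\bigl(A/(w_1^N, \ldots, w_s^N)\bigr) \leq N^s \length\!\bigl(A/(w_1, \ldots, w_s)\bigr).
\]
The one-variable case comes from the filtration $A \supset (w) \supset \cdots \supset (w^N)$ modulo $(w^N)$, whose successive quotients $(w^i)/(w^{i+1})$ are each quotients of $A/(w)$ (the map $a \mapsto aw^i$ has kernel $(w) + \operatorname{ann}(w^i) \supseteq (w)$); the general case follows by iterating one exponent at a time, since after fixing $w_j^{N_j}$ for $j \neq i$ the ambient quotient is one-dimensional with $w_i$ as a parameter. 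Applied with $N = nqq'$, this upper-bounds our length by $(nqq')^s \length(R/(\frq{I}, x_1, \ldots, x_s))$.

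To finish, observe that $\length(R/(\frq{I}, x_1, \ldots, x_s)) = \length_{\tilde R}(\tilde R/(I\tilde R)^{[q]})$. Since $\dim \tilde R = d - s$ and $I\tilde R$ is primary to the maximal ideal of $\tilde R$, Monsky's theorem gives existence of the Hilbert-Kunz limit in $\tilde R$, so $\length_{\tilde R}(\tilde R/(I\tilde R)^{[q]})/q^{d-s}$ is convergent and hence bounded by some constant $C_0$. Combining everything,
\[
\length\!\bigl(R/(\frq{I}+\mf m^{[qq']})\bigr) \leq (nqq')^s \cdot C_0 q^{d-s} = n^s C_0 \cdot q^d q'^s,
\]
and $C := n^s C_0$ is the sought-after constant. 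The hardest step is the simultaneous prime avoidance in Step~1: one needs a single system of parameters that realizes the right dimension drop both for $R$ (so that $\tilde R$ has dimension exactly $d - s$, crucial for the final Hilbert-Kunz bound to produce $q^{d-s}$ rather than the cruder $q^d$) and for $R/I$ (so the $\bar x_i$ form a system of parameters in $S$ for the Lech step). Without this matching, the bound would be off by a factor of $q^s$.
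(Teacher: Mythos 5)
Your proof is correct, but it takes a genuinely different and somewhat heavier route than the paper's. The paper's argument is purely by iterated filtration: pick any $x_1,\ldots,x_h$ whose images form a system of parameters in $R/I$ (no constraint in $R$ whatsoever), use the direct inclusion $(x_1,\ldots,x_h)^{[qq']}\subseteq\mf m^{[qq']}$ rather than the looser pigeonhole bound $\mf m^{nqq'}\subseteq\mf m^{[qq']}$, filter by powers of $x_i^q$ to peel off $(q')^h$, then pick a full system of parameters $y_1,\ldots,y_d$ of $R$ \emph{inside} the $\mf m$-primary ideal $(I,x_1,\ldots,x_h)$ and filter again by powers of $y_i$ to peel off $q^d$; the constant is $\length(R/(y_1,\ldots,y_d))$. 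This second filtration does exactly the work that you outsource to Monsky's theorem, and the step of finding a sop of $R$ \emph{after the fact} inside the $\mf m$-primary ideal $(I,x_1,\ldots,x_h)$ is what makes the simultaneous prime avoidance you use in Step~1 unnecessary: the paper never needs $\dim R/(x_1,\ldots,x_h)=d-h$. Your concern that without the simultaneous choice ``the bound would be off by a factor of $q^s$'' is accurate for \emph{your} route (since you quotient first by $(x_1,\ldots,x_s)$ and then appeal to Monsky in $\tilde R$), but it is an artifact of that route, not an intrinsic obstacle. Both proofs are valid; the paper's is more elementary (no appeal to Monsky's existence theorem, and ordinary rather than simultaneous prime avoidance) and gives a cleaner constant that does not involve the number of generators of $\mf m$. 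One small inessential overcomplication in your write-up: the one-variable Lech-type step $\length(A/(L,w^N))\leq N\length(A/(L,w))$ works for any ideal $L$ with $(L,w)$ primary to $\mf m_A$, so there is no need to track that the ``ambient quotient is one-dimensional'' when you iterate.
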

\begin{proof}
Let $x_1, \ldots, x_h$ be elements of $R$ such that their images form a system of parameters in $R/I$.
Then 
\[
\length (R/(\frq{I}+\mf m^{[qq']})) \leq \length (R/(\frq{I}+(x_1, \ldots, x_h)^{[qq']})) = 
\length (R/(\frq{I}+(x_1^{qq'}, \ldots, x_h^{qq'}))).
\]
Filtering by the powers of $x_i^q$, we get that
\[
\length (R/(\frq{I}+(x_1^{qq'}, \ldots, x_h^{qq'}))) \leq (q')^h \length(R/\frq{(I, x_1, \ldots, x_h)}).
\]
Since $(I, x_1, \ldots, x_h)$ is an $\mf m$-primary ideal, it contains a system of parameters, say, 
$y_1, \ldots, y_d$.
Then, filtering by the powers of $y_i$, we obtain that
\[
\length(R/\frq{(I, x_1, \ldots, x_h)}) \leq \length (R/\frq{(y_1, \ldots, y_d)}) \leq q^d \length (R/(y_1, \ldots, y_d)).
\]
Last, let $C = \length (R/(y_1, \ldots, y_d))$ and observe that $d = \dim R$ and $h = \dim R/I$.
\end{proof}

\begin{corollary}\label{module bound}
Let $(R, \mf m)$ be a local ring, let $J$ be an $\mf m$-primary ideal, and $I$ be an arbitrary ideal.
If $M$ is a finitely generated $R$-module, 
then there exists a constant $D$ (independent of $q'$) such that for all $q, q'$
\[
\length (M/(\frq{I} + J^{[qq']})M) \leq D q'^{\dim R/I}q^{\dim M}.
\]
\end{corollary}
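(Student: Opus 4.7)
The plan is to reduce Corollary~\ref{module bound} to Lemma~\ref{boundlemma} in two steps: first replace the $\mf m$-primary ideal $J$ by $\mf m$; second replace the module $M$ by cyclic quotients $R/P$ via a prime filtration.

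For the first reduction, pick $N \geq 1$ with $\mf m^N \subseteq J$, which is possible because $J$ is $\mf m$-primary. Then for any $z \in \mf m$ we have $z^{Nqq'} = (z^N)^{qq'} \in J^{[qq']}$, so $\mf m^{[Nqq']} \subseteq J^{[qq']}$ and consequently
\[
\length(M/(\frq{I} + J^{[qq']})M) \leq \length(M/(\frq{I} + \mf m^{[q \cdot Nq']})M).
\]
Hence it suffices to prove a bound of the desired form for the right-hand side, which is of the shape handled in Lemma~\ref{boundlemma} but with $R$ replaced by an arbitrary finitely generated module.

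For the second reduction, I would choose a prime filtration $0 = M_0 \subset M_1 \subset \cdots \subset M_t = M$ with $M_i/M_{i-1} \cong R/P_i$. Each $P_i$ lies in the support of $M$, so $\dim R/P_i \leq \dim M$. Right-exactness of $-\otimes_R R/K$ applied to each short exact sequence, together with induction on $i$, gives the standard estimate
\[
\length(M/KM) \leq \sum_{i=1}^t \length(R/(K + P_i))
\]
for any ideal $K$. Apply this with $K = \frq{I} + \mf m^{[q \cdot Nq']}$ and then invoke Lemma~\ref{boundlemma} inside each local ring $R/P_i$ for the ideal $(I + P_i)/P_i$; this produces constants $C_i$ with
\[
\length\bigl(R/(P_i + \frq{I} + \mf m^{[q \cdot Nq']})\bigr) \leq C_i (Nq')^{\dim R/(P_i + I)} q^{\dim R/P_i}.
\]
Bounding $\dim R/(P_i + I) \leq \dim R/I$ and $\dim R/P_i \leq \dim M$, each summand is at most $C_i N^{\dim R/I} q'^{\dim R/I} q^{\dim M}$; summing and setting $D := N^{\dim R/I} \sum_{i=1}^t C_i$ delivers the corollary.

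I do not anticipate a significant obstacle: both reductions are standard Noetherian bookkeeping. The only point that needs care is the containment $\mf m^{[Nqq']} \subseteq J^{[qq']}$ used to absorb $J$ into $\mf m$; after that, the prime filtration argument and the application of Lemma~\ref{boundlemma} inside each $R/P_i$ are entirely routine.
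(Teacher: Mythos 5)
Your proof is correct in substance and follows the same two-step reduction as the paper (absorb $J$ into a Frobenius power of $\mf m$, then pass from $M$ to cyclic pieces), but the second step is executed by a different — and equally valid — route, so a brief comparison is in order.

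For the first reduction, note that the bracket exponents in Lemma~\ref{boundlemma} are powers of $p$, so you should not write $\mf m^{[Nqq']}$ with $N$ an arbitrary integer. Simply choose $q_0 = p^{e_0} \geq N$; then $\mf m^{[q_0]} \subseteq \mf m^N \subseteq J$, hence $\mf m^{[q_0 qq']} \subseteq J^{[qq']}$, and you can apply the lemma with the second index $q_0 q'$, which is again a power of $p$. The extra factor $q_0^{\dim R/I}$ is absorbed into the constant. This is exactly the maneuver the paper uses.

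For the second reduction, you use a prime filtration $0 = M_0 \subset \cdots \subset M_t = M$ and bound $\length(M/KM) \leq \sum_i \length(R/(K+P_i))$, then apply Lemma~\ref{boundlemma} in each $R/P_i$. This works: since $P_i \in \operatorname{Supp} M$ you have $\dim R/P_i \leq \dim M$, and since $P_i + I \supseteq I$ you have $\dim R/(P_i + I) \leq \dim R/I$, so each summand is dominated by a multiple of $q'^{\dim R/I} q^{\dim M}$. The paper instead takes a single surjection $(R/K)^n \to M$ with $K = \operatorname{Ann}(M)$ and $n$ the number of generators, yielding $\length(M/(\frq{I}+\mf m^{[qq']})M) \leq n\,\length(R/(K + \frq{I}+\mf m^{[qq']}))$ in one step, with $\dim R/K = \dim M$ automatic. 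The paper's version is a bit more economical (one application of the lemma, no filtration bookkeeping), but your version is entirely standard and gives the same bound with a constant $\sum_i C_i$ in place of $nC$. Either argument suffices.
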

\begin{proof}
Since $J$ is $\mf m$-primary, $\mf m^{[q_0]} \subseteq J$ for some $q_0$, thus  
if the result holds for $J = \mf m$, then
\[
\length (M/(\frq{I} + J^{[qq']})M) \leq \length (M/(\frq{I} + \mf m^{[qq'q_0]})M)
\leq (Dq_0^{\dim M}) q'^{\dim R/I}q^{\dim M}.
\] 
Therefore we assume that $J = \mf m$.

Let $K$ be the annihilator of $M$ and let $n$ be the minimal number of generators of $M$.
Then there exists a surjection $(R/K)^n \to M \to 0$, so, after tensoring with $R/(\frq{I}+\mf m^{[qq']})$,
we obtain from Lemma~\ref{boundlemma} the estimate
\begin{align*}
\length (M/(\frq{I}+\mf m^{[qq']})M) &\leq n\length (R/(K + \frq{I}+\mf m^{[qq']}))
\leq nC q'^{\dim R/(I+K)}q^{\dim (R/K)} \\
&\leq nC q'^{\dim R/I}q^{\dim M}.
\end{align*}
\end{proof}

For the next lemma, recall that $\Minh(R)$ denotes the set of prime ideals $P$ such that 
$\dim R/P = \dim R$.

\begin{lemma}\label{min primes}
Let $(R, \mf m)$ be a reduced local ring, $J$ be an $\mf m$-primary ideal, and $I$ be an arbitrary ideal.
Let $M,N$ be finitely generated $R$-modules such that $M_P \cong  N_P$ for any $P \in \Minh (R)$.
Then there exists a constant $C$ independent of $q'$ and such that for all $q,q'$
\[
|\length (M/(\frq{I} + J^{[qq']})M) - \length (N/(\frq{I}+ J^{[qq']})N)| < Cq'^{\dim R/I} q^{\dim R - 1}.
\] 
\end{lemma}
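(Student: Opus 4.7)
The plan is to build a homomorphism $g \colon M \to N$ that becomes an isomorphism at every top-dimensional minimal prime and then estimate $|\length(M/KM) - \length(N/KN)|$ using the short exact sequences it produces, where throughout $K := \frq{I} + J^{[qq']}$. To construct $g$, since $M$ is finitely presented we have $S^{-1}\Hom_R(M,N) = \Hom_{S^{-1}R}(S^{-1}M, S^{-1}N)$, where $S$ is the complement of $\bigcup_{P \in \Minh R} P$. The hypothesis together with the preceding remark produces an isomorphism $S^{-1}M \cong S^{-1}N$, which lifts to some $g \in \Hom_R(M,N)$ whose localization at every top-dimensional minimal prime is an isomorphism. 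Consequently $\ker g$ and $\operatorname{coker} g$ both have dimension strictly less than $\dim R$, and by prime avoidance we can choose an element $s \in R$ lying outside every top-dimensional minimal prime together with an integer $n$ for which $s^n \operatorname{coker} g = 0$.

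I would split the four-term exact sequence coming from $g$ into the short exact sequences $0 \to \ker g \to M \to g(M) \to 0$ and $0 \to g(M) \to N \to \operatorname{coker} g \to 0$. Tensoring the first with $R/K$ and using right-exactness yields
\[
\bigl|\length(M/KM) - \length(g(M)/Kg(M))\bigr| \leq \length(\ker g / K\ker g),
\]
while the second sequence gives, because $(g(M) \cap KN)/Kg(M)$ is exactly the kernel of $g(M)/Kg(M) \to N/KN$, the identity
\[
\length(N/KN) - \length(g(M)/Kg(M)) = \length(\operatorname{coker} g / K\operatorname{coker} g) - \length\bigl((g(M) \cap KN)/Kg(M)\bigr).
\]
Combining these bounds $|\length(M/KM) - \length(N/KN)|$ by the sum of three length terms. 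Corollary~\ref{module bound} applied to $\ker g$ and $\operatorname{coker} g$, both of dimension less than $\dim R$, immediately bounds the first two of them by $O(q^{\dim R - 1}(q')^{\dim R/I})$.

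The main obstacle is the third term, $\length\bigl((g(M) \cap KN)/Kg(M)\bigr)$. I would handle it by observing that $s^n \operatorname{coker} g = 0$ gives $s^n N \subseteq g(M)$, and hence $s^n(g(M) \cap KN) \subseteq s^n K N = K s^n N \subseteq Kg(M)$, so $(g(M) \cap KN)/Kg(M)$ is annihilated by $s^n$ and embeds into the kernel of multiplication by $s^n$ on $g(M)/Kg(M)$. A short computation identifies that kernel as having the same length as $g(M)/(K + (s^n))g(M)$, which in turn equals $(g(M)/s^n g(M))/K(g(M)/s^n g(M))$. Since $s$ avoids every top-dimensional minimal prime, $g(M)/s^n g(M)$ has dimension at most $\dim R - 1$, and a final application of Corollary~\ref{module bound} bounds this quantity by $O(q^{\dim R - 1}(q')^{\dim R/I})$. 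Summing the three contributions yields the claimed inequality with an appropriate constant $C$.
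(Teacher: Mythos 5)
Your proof is correct, but it takes a genuinely different route from the paper's. The paper sidesteps the difficulty you confront by constructing \emph{two} homomorphisms, $M \to N$ and $N \to M$, each of which becomes an isomorphism after localizing at $S = R \setminus \bigcup_{P \in \Minh(R)} P$; tensoring each four-term exact sequence $M \to N \to K_1 \to 0$ and $N \to M \to K_2 \to 0$ with $R/(I^{[q]} + J^{[qq']})$ and using right-exactness gives one direction of the inequality per map, with both cokernels $K_1, K_2$ of dimension $< \dim R$ handled directly by Corollary~\ref{module bound}. You instead use a single map $g\colon M \to N$ and factor through $g(M)$, which forces you to control the extra term $(g(M)\cap KN)/Kg(M)$; you do so correctly by killing the cokernel with an element $s$ avoiding $\Minh(R)$, observing that this term is $s^n$-torsion inside $g(M)/Kg(M)$, and invoking the identity $\length(\ker(s^n)) = \length(\operatorname{coker}(s^n))$ for a finite-length module to convert it into a colength of $g(M)/s^n g(M)$, a module of dimension $< \dim R$. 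Both arguments are valid; the paper's two-map bookkeeping is shorter and avoids the intersection term entirely, while your one-map decomposition is more self-contained (it never needs a map in the reverse direction) at the cost of the extra torsion computation.
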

\begin{proof}
Let $S = R \setminus \cup_{\Minh(R)} P$. As explained in \cite[Lemma~3.3]{Tucker}, 
there exist homomorphisms $M \to N$ and $N \to M$ that become isomorphisms
after localization by $S$. 
Thus we have exact sequences 
\[
M \to N \to K_1 \to 0,
\]
\[
N \to M \to K_2 \to 0
\]
where $\dim K_1, \dim K_2 < \dim R$, since $S^{-1}K_1 = S^{-1}K_2 = 0$.

Tensoring the first exact sequence with $R/(\frq{I} + J^{[qq']})$ and taking length, we obtain that 
\[
\length (N/(\frq{I} + J^{[qq']})N)  \leq \length (M/(\frq{I} + J^{[qq']})M) + \length (K_1/(\frq{I} + J^{[qq']})K_1),
\]
while the second sequence yields 
\[
\length (M/(\frq{I} + J^{[qq']})M)  \leq \length (N/(\frq{I} + J^{[qq']})N) + \length (K_2/(\frq{I} + J^{[qq']})K_2).
\]
By Corollary~\ref{module bound}, there are constants $C_1$ and $C_2$ such that
$\length (K_1/(\frq{I} + J^{[qq']})K_1) \leq C_1q'^{\dim R/I} q^{\dim R - 1}$
and $\length (K_2/(\frq{I} + J^{[qq']})K_2) \leq C_2q'^{\dim R/I} q^{\dim R - 1}$.
Combining the estimates together, we derive that
\[|\length (N/(\frq{I} + J^{[qq']})N) -\length (M/(\frq{I} + J^{[qq']})M)| \leq 
\max (C_1, C_2) q'^{\dim R/I} q^{\dim R - 1}.
\]
\end{proof}

For the next result, we need a bit of notation.
For an $R$-module $M$ and an integer $e$, we use $F_*^e M$ to denote an $R$-module obtained from $M$ 
via the extension of scalars through the $e$th iterate of the Frobenius endomorphism $F^e \colon R \to R$.
Thus $F_*^e M$ is isomorphic to $M$ as an abelian group, but elements of $R$ act as $p^e$-powers.
So, for any ideal $I$, $IF_*^e M \cong F_*^e \frq{I}M$. 
If $R$ is reduced, $F_* R$ can be identified with the ring of $p$-roots $R^{1/p}$.

\begin{definition}
Let $R$ be a ring of characteristic $p > 0$. 
For a prime ideal $\mf p$ of $R$, we denote 
$\alpha(\mf p) = \log_p [k(\mf p):k(\mf p)^p]$,
where $k(\mf p) = R_\mf p/\mf pR_\mf p$ is the residue field of $\mf p$.
\end{definition}

This number controls the change of length under Frobenius, 
$\length_R (F_*^e M) = p^{e\alpha(R)} \length_R (M)$ for a finite length $R$-module $M$.
Kunz (\cite[Proposition~2.3]{Kunz2}) observed how these numbers change under localization.

\begin{proposition}\label{kunzdegree}
Let $R$ be F-finite and let $\mf p \subseteq \mf q$ be prime ideals.
Then $\alpha(\mf p) = \alpha(\mf q) + \dim R_\mf q/\mf pR_\mf q$.
\end{proposition}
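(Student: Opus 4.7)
The strategy is to reduce to the case of an F-finite local domain and then compute a single Hilbert-Kunz multiplicity in two different ways. First I would localize and quotient: set $S = (R/\mf p)_\mf q$. Since F-finiteness is inherited by quotients and localizations, $S$ is an F-finite local domain of dimension $d := \dim R_\mf q/\mf pR_\mf q$ with maximal ideal $\mf n$, fraction field $K = k(\mf p)$, and residue field $k = k(\mf q)$. Because $\alpha(\mf p)$ and $\alpha(\mf q)$ depend only on the respective residue fields, the proposition reduces to showing $[K:K^p] = p^d \cdot [k:k^p]$ inside $S$.

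The key tool is $F_*S$, a finitely generated $S$-module of dimension $d$ by F-finiteness, whose Hilbert-Kunz multiplicity with respect to $\mf n$ I would compute in two ways. On one hand, the Frobenius twist commutes with localization and $F_*S \otimes_S K \cong F_*K$, a $K$-vector space of dimension $[K:K^p] = p^{\alpha(\mf p)}$. Thus $F_*S$ has generic rank $p^{\alpha(\mf p)}$, and the module version of the associativity formula (Lemma~\ref{ass formula}), equivalently a direct comparison with $S^{p^{\alpha(\mf p)}}$ via Lemma~\ref{min primes}, gives
\[
\ehk(\mf n, F_*S) = p^{\alpha(\mf p)} \cdot \ehk(\mf n, S).
\]
On the other hand, since an element $r\in S$ acts on $F_*S$ as multiplication by $r^p$, one has the twist identity $\mf n^{[q]}F_*S = F_*(\mf n^{[pq]})$ as subsets of $S$, so $F_*S/\mf n^{[q]}F_*S \cong F_*(S/\mf n^{[pq]})$. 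The length formula $\length_S(F_*M) = p^{\alpha(\mf q)}\length_S(M)$ recorded right before the proposition then gives
\[
\length_S\!\bigl(F_*S/\mf n^{[q]}F_*S\bigr) = p^{\alpha(\mf q)}\length_S\!\bigl(S/\mf n^{[pq]}\bigr),
\]
and dividing by $q^d$ and rescaling $q\mapsto pq$ in the limit yields
\[
\ehk(\mf n, F_*S) = p^{\alpha(\mf q)+d} \cdot \ehk(\mf n, S).
\]

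Equating the two expressions and cancelling the positive quantity $\ehk(\mf n, S)$ (the degenerate case $d=0$, where $\mf p = \mf q$, is immediate) produces $\alpha(\mf p) = \alpha(\mf q)+d$, as desired. The main technical care is in verifying the two Frobenius-twist compatibilities: the identity $\mf n^{[q]}F_*S = F_*(\mf n^{[pq]})$ inside $S$, and the isomorphism $F_*S\otimes_S K \cong F_*K$. Both boil down to tracing that the twisted action of $r\in S$ on $F_*S$ is multiplication by $r^p$, so inverting $r$ in the twisted action is the same as inverting $r^p$ in the usual one, and ideals on the module side correspond to Frobenius powers on the ring side. Once these identifications are nailed down, the remainder is routine length and multiplicity bookkeeping.
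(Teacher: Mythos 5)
Your proof is correct, and it is worth noting that the paper does not prove this statement at all --- it cites \cite[Proposition~2.3]{Kunz2} --- so you have supplied an independent argument. Your two-sided computation of $\ehk(\mf n, F_*S)$ is a clean modern approach: the generic rank $p^{\alpha(\mf p)}$ of $F_*S$ is read off from $(F_*S)\otimes_S K \cong F_*K$, while the twist identity $\mf n^{[q]}F_*S = F_*(\mf n^{[pq]})$ combined with $\length_S(F_*M)=p^{\alpha(\mf q)}\length_S(M)$ reproduces $p^{\alpha(\mf q)+d}\ehk(\mf n,S)$ after the rescaling $q \mapsto pq$. Interestingly, your argument inverts the logical direction of the paper: the paper uses Proposition~\ref{kunzdegree} as input to prove Corollary~\ref{rank of frobenius} (that $F_*M$ has generic rank $p^{\alpha(R)+d}\cdot\rank M$), whereas you establish the generic rank of $F_*S$ directly and then deduce the $\alpha$-formula by comparing the two multiplicity computations; there is no circularity because Lemma~\ref{min primes} and the $\length_S(F_*M)$ formula are both independent of the proposition.

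A few small points to tighten for a fully rigorous write-up. First, when you identify $(F_*S)\otimes_S K$ with $F_*K$, you should justify that inverting the twisted action of nonzero $s\in S$ is the same as inverting ordinary multiplication: this is because $s$ and $s^p$ generate the same multiplicative set up to saturation in the domain $S$. Second, you invoke Lemma~\ref{min primes} with $N=S^{\oplus p^{\alpha(\mf p)}}$; to get $\ehk(\mf n, F_*S)=\ehk(\mf n, N)$ you should take $I=(0)$ and specialize $q'=1$ in that lemma, which gives $|\length(M/\mf n^{[q]}M)-\length(N/\mf n^{[q]}N)| < Cq^{d-1}$ and hence equality of the limits. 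Third, cancelling $\ehk(\mf n,S)$ requires it to be positive, which is automatic for a Noetherian local ring (it is at least $1$). None of these is a gap in the idea, just bookkeeping to nail down.
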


\begin{corollary}\label{rank of frobenius}
Let $(R, \mf m)$ be a reduced F-finite local ring of dimension $d$ and $M$ be a finitely generated $R$-module. 
Then for any $\mf p \in \Minh (R)$ of $R$ the modules 
$M_\mf p^{\oplus p^{\alpha(\mf m) + d}}$ and $(F_* M)_\mf p$ are isomorphic. 

In particular, if $R$ is equidimensional and $M$ has rank $r$, then the rank of $F_* M$ is $rp^{\alpha(\mf m) + d}$.
\end{corollary}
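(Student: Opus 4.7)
The plan is to reduce everything to a statement about vector spaces over the field $K := R_\mf p = k(\mf p)$, which is indeed a field since $R$ is reduced and $\mf p$ is a minimal prime. The first ingredient is that Frobenius pushforward commutes with localization: for $s \in R$, inverting $s$ on $F_* M$ (where $s$ acts as $s^p$) is equivalent to inverting $s^p$, and the two generate the same multiplicative system. Thus $(F_*M)_\mf p \cong F_*(M_\mf p)$, where the right-hand side uses the Frobenius endomorphism of $K$.

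The second ingredient is the computation of $\alpha(\mf p)$. Since $\mf p \in \Minh(R)$, we have $\dim R_\mf m / \mf pR_\mf m = \dim R/\mf p = d$, so Proposition~\ref{kunzdegree} applied to $\mf p \subseteq \mf m$ yields $\alpha(\mf p) = \alpha(R) + d$. It therefore suffices to prove that for any $K$-vector space $V$, $F_* V \cong V^{\oplus p^{\alpha(\mf p)}}$ as $K$-modules.

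The main computation is elementary linear algebra. Pick a $K^p$-basis $b_1, \ldots, b_s$ of $K$, where $s = [K:K^p] = p^{\alpha(\mf p)}$, together with a $K$-basis $\{v_i\}$ of $V$. Every $v \in V$ has a unique expansion $v = \sum_i c_i v_i$ with $c_i \in K$, and each $c_i$ has a unique expansion $c_i = \sum_j a_{ij}^p b_j$ with $a_{ij} \in K$ (since the Frobenius map $K \to K^p$ is bijective). Substituting and then rewriting the $p$th powers in terms of the $F_*$ scalar action gives $v = \sum_{i,j} a_{ij} \cdot (b_j v_i)$, exhibiting $\{b_j v_i\}$ as a $K$-basis of $F_* V$. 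Hence $\dim_K F_* V = p^{\alpha(\mf p)} \dim_K V$, which is exactly the desired isomorphism.

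The ``in particular'' part is an immediate consequence. When $R$ is equidimensional, every minimal prime lies in $\Minh(R)$ and satisfies $\alpha(\mf p) = \alpha(R) + d$; if $M$ has rank $r$, then $M_\mf p \cong R_\mf p^r$ at each such prime, so $(F_* M)_\mf p \cong R_\mf p^{rp^{\alpha(R)+d}}$, and the rank of $F_* M$ is $rp^{\alpha(R)+d}$. The only step requiring any real care is checking that $F_*$ genuinely commutes with the relevant localization; once that bookkeeping is done, the rest is formal.
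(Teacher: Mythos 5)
Your proof is correct and takes essentially the same approach as the paper: localize to make $R_\mf p$ a field, apply Proposition~\ref{kunzdegree} to get $\alpha(\mf p)=\alpha(R)+d$, use that $F_*$ commutes with localization, and compare dimensions. The only difference is that you establish the dimension count $\dim_K F_*V = [K:K^p]\dim_K V$ directly by exhibiting a basis, whereas the paper simply invokes the standard length formula $\length_R(F_*M)=p^{\alpha(R)}\length_R(M)$ stated just before Proposition~\ref{kunzdegree}.
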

\begin{proof}
By Proposition~\ref{kunzdegree}, $\alpha(\mf p) = \alpha (\mf m) + d$ for any $\mf p \in \Minh(R)$.
Hence
\[
\length_{R_\mf p} (F_* (M_\mf p)) = p^{\alpha(\mf p)} \length_{R_\mf p}(M_\mf p).
\]
Since $R$ is reduced, $R_\mf p$ is a field, hence the vector spaces $F_* (M_\mf p)$ and 
$\oplus^{p^{\alpha(\mf p)}} M_\mf p$ are isomorphic. Last, recall that $F_* (M_\mf p) \cong (F_* M)_\mf p$ 
since $F_*$ commutes with localization.
\end{proof}

\begin{theorem}\label{F-finite convergence}
Let $(R, \mf m)$ be a reduced F-finite local ring of dimension $d$, $J$ be an $\mf m$-primary ideal, and $I$ be an arbitrary ideal.
Then for any finitely generated $R$-module $M$ there exists a constant $C$ such that
\[|\length (M/(I^{[q]}+J^{[qq']})M) - q^d\ehk(I + J^{[q']}, M)| < Cq^{d - 1}q'^{\dim R/I}\]
for all $q, q'$.
In particular, the bisequence
\[
\frac{\length (M/(\frq{I} + J^{[qq']})M)}{q^{\dim R}q'^{\dim R/I}}
\]
converges uniformly with respect to $q$.
\end{theorem}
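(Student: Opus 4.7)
The plan is to prove the quantitative estimate first and to read off uniform convergence as an immediate consequence. Set $K := I + J^{[q']}$, an $\mf m$-primary ideal, so that $I^{[q]} + J^{[qq']} = K^{[q]}$ and, by definition, $\lim_{q\to\infty} \length(M/K^{[q]}M)/q^d = \ehk(I + J^{[q']}, M)$. Write $b(q,q') := \length(M/(I^{[q]} + J^{[qq']})M)$. Following Tucker's treatment in \cite{Tucker}, the strategy is to establish a Cauchy-type estimate
\[\left|\frac{b(qp,q')}{(qp)^d} - \frac{b(q,q')}{q^d}\right| \;\leq\; \frac{C_1\, q'^{\dim R/I}}{q}\]
with a constant $C_1$ depending only on $M, I, J, R$, and then to iterate along the subsequence $q, qp, qp^2, \dots$, summing a geometric series in $p$ to pass to the limit.

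The key ingredient is the Frobenius pushforward $F_*$ together with Corollary~\ref{rank of frobenius} and Lemma~\ref{min primes}. Because $R$ acts on $F_*M$ through the Frobenius, one has $a\cdot F_*M = F_*(a^p M)$; in particular
\[(I^{[q]} + J^{[qq']})\cdot F_*M \;=\; F_*\!\bigl(I^{[qp]}M + J^{[qq'p]}M\bigr),\]
so that $\length_R\!\bigl(F_*M/(I^{[q]} + J^{[qq']})F_*M\bigr) = p^{\alpha(R)}\, b(qp, q')$. On the other hand, since $R$ is reduced and F-finite, Corollary~\ref{rank of frobenius} supplies an isomorphism $(F_*M)_\mf p \cong (M^{\oplus p^{\alpha(R)+d}})_\mf p$ at every $\mf p \in \Minh(R)$. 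Applying Lemma~\ref{min primes} to the pair $F_*M$ and $N := M^{\oplus p^{\alpha(R)+d}}$ (with the original $I$ and $J$, not $K$) therefore yields a constant $C_0$, independent of $q$ and $q'$, such that
\[\bigl|\,p^{\alpha(R)}\, b(qp,q') \,-\, p^{\alpha(R)+d}\, b(q,q') \,\bigr| \;<\; C_0\, q^{d-1}\, q'^{\dim R/I}.\]
Dividing through by $p^{\alpha(R)+d}\, q^d$ produces the Cauchy estimate above with $C_1 = C_0/p^{\alpha(R)+d}$.

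Iterating the Cauchy bound from $q$ to $qp^k$ and summing $\sum_{i\geq 0} p^{-i} = p/(p-1)$ gives $|b(qp^k,q')/(qp^k)^d - b(q,q')/q^d| \leq C_2\, q'^{\dim R/I}/q$, uniformly in $k$. Letting $k\to\infty$ and using that the left term tends to $\ehk(I + J^{[q']}, M)$, one obtains
\[\bigl|\ehk(I + J^{[q']},M) - b(q,q')/q^d\bigr| \;\leq\; C_2\, q'^{\dim R/I}/q,\]
which, after multiplication by $q^d$, is exactly the inequality in the theorem with $C := C_2$. The uniform convergence assertion is then immediate: dividing by $q'^{\dim R/I}$ shows that the distance between $b(q,q')/(q^d\, q'^{\dim R/I})$ and its pointwise limit $\ehk(I + J^{[q']}, M)/q'^{\dim R/I}$ is at most $C/q$, independent of $q'$.

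The principal technical point is the $F_*$ bookkeeping together with verifying that Corollary~\ref{rank of frobenius} delivers the hypothesis of Lemma~\ref{min primes}; once that is in place, and granted that the constant supplied by Lemma~\ref{min primes} is independent of $q'$ (which is exactly what that lemma states), the iteration is routine. The payoff of this reduction is that freezing $q'$ converts the double limit problem into a one-variable Cauchy argument in $q$, whose error is controlled polynomially in $q'$.
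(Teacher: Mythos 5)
Your proof is correct and follows essentially the same approach as the paper: both apply Lemma~\ref{min primes} to $F_*M$ and $M^{\oplus p^{\alpha(R)+d}}$ (via Corollary~\ref{rank of frobenius}) to get a one-step estimate relating $q$ to $qp$, and then iterate. The only difference is presentational: you package the iteration as a normalized Cauchy estimate with a geometric series bound, whereas the paper runs an explicit induction on $q_1$ tracking the accumulated error as $\frac{q_1-1}{p-1}$ before dividing by $q_1^d$; the two bookkeeping schemes are equivalent.
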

\begin{proof}
By Corollary~\ref{rank of frobenius}, 
$(F_* M)_P$ and $M_P^{\oplus p^{\alpha(R) + d}}$ are isomorphic for any minimal prime $P \in \Minh (R)$.
Thus, we can apply Lemma~\ref{min primes} to $M^{\oplus p^{\alpha(R) + d}}$ and $F_* M$ and get that
\[
|p^{\alpha(R) + d}\length (M/(\frq{I} + J^{[qq']})M) - \length (F_* M/(\frq{I} + J^{[qq']})F_* M)| 
< Cq'^{\dim R/I} q^{d - 1}
\] 
for any $q, q'$ and a constant $C$ depending only on $M$ and $I$.
We have
\[
\length (F_* M/(\frq{I} + J^{[qq']})F_* M) = p^{\alpha(R)}\length (M/(I^{[qp]} + J^{[qpq']})M).
\]
Therefore, using that $p^{-\alpha(R)}C \leq C$, 
\begin{equation}\label{1 formula}
|p^d\length (M/(\frq{I} + J^{[qq']})M) - \length (M/(I^{[qp]} + J^{[qpq']})M)| <  Cq'^{\dim R/I} q^{d - 1}.
\end{equation}

Now, we prove by induction on $q_1$ that for all $q,q'$
\begin{equation}\label{ind formula}
|(q_1)^{d}\length (M/(I^{[q]} + J^{[qq']})M) - \length (M/(I^{[qq_1]} + J^{[qq_1q']})M)|<
Cq'^{\dim R/I}(qq_1/p)^{d -1}\frac{q_1 - 1}{p - 1}. 
\end{equation}
The induction base of $q_1 = p$ is (\ref{1 formula}). 
Now, assume that the claim holds for $q_1$ and we want to prove it for $q_1p$.

First, (\ref{1 formula}) applied to $qq_1$ gives 
\begin{equation}\label{2 formula}
|p^{d}\length (M/(I^{[qq_1]} + J^{[qq_1q']})M) - \length (M/(I^{[qq_1p]} + J^{[qq_1pq']})M)|
< Cq'^{\dim R/I}(qq_1)^{d - 1},
\end{equation}
and, multiplying the induction hypothesis by $p^{d}$, we get
\begin{equation}\label{3 formula}
|(q_1p)^{d}\length (M/(I^{[q]} + J^{[qq']})M) - 
p^{d}\length (M/(I^{[qq_1]} + J^{[qq_1q']})M)|
< Cq'^{\dim R/I}(qq_1)^{d -1}\frac{q_1p - p}{p - 1}.
\end{equation}
Using that $p^{d}\length (M/(I^{[qq_1]} + J^{[qq_1q']})M)$
appears in both (\ref{2 formula}) and (\ref{3 formula}), one obtains 
\begin{align*}
&|(q_1p)^{d}\length (M/(I^{[q]} + J^{[qq']})M) - \length (M/(I^{[qq_1p]} + J^{[qq_1pq']})M)| < \\ &<
Cq'^{\dim R/I}(qq_1)^{d -1} \left(\frac{q_1p - p}{p - 1} + 1\right)
 = Cq'^{\dim R/I}(qq_1)^{d -1} \left(\frac{q_1p - 1}{p - 1}\right),
\end{align*}
and the induction step follows.

Now, dividing (\ref{ind formula}) by $q_1^{d}$, we obtain
\[
|\length (M/(I^{[q]} + J^{[qq']})M) - \frac{1}{q_1^{d}}\length (M/(I^{[qq_1]} + J^{[qq_1q']})M)|<
Cq^{d - 1}\cdot\frac{q_1 - 1}{p -1}\cdot\frac{1}{q_1p^{d - 1}}
\leq Cq^{d - 1}.
\]

Thus, if we let $q_1 \to \infty$ and note that $\ehk (I^{[q]} + J^{[qq']}, M) = q^d \ehk(I + J^{[q']})$, we get that 
\[|\length (M/(I^{[q]} + J^{[qq']})M) - q^d\ehk(I + J^{[q']}, M)| < Cq^{d - 1},\]
and the claim follows.
\end{proof}

\begin{corollary}\label{uniform convergence}
Let $(R, \mf m)$ be a local ring of dimension $d$, $J$ an $\mf m$-primary ideal, and $I$ be an arbitrary ideal.
Then there is a $q_0$ such that for any finitely generated $R$-module $M$ there exists a constant $C$ such that
\[|\length (M/(I^{[q]} + J^{[qq']})M) - q^d\ehk(I + J^{[q']}, M)| < Cq^{d - 1}q'^{\dim R/I}\]
for all $q'$ and all $q \geq q_0$.
In particular, the bisequence
\[
\frac{\length (M/(\frq{I} + J^{[qq']})M)}{q^{\dim R}q'^{\dim R/I}}
\]
converges uniformly with respect to $q$.
\end{corollary}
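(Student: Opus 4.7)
The strategy is to reduce to Theorem~\ref{F-finite convergence} via the F-finitification procedure of Remark~\ref{ffinitefication}. Completing first is harmless: $\hat R$ is faithfully flat over $R$ with $\mf m \hat R$ maximal, so the lengths $\length(M/(\frq{I}+J^{[qq']})M)$, the Hilbert-Kunz multiplicities $\ehk(I+J^{[q']},M)$, and all relevant dimensions are preserved, and I may assume $R$ is complete. Remark~\ref{ffinitefication} then produces a faithfully flat local extension $R \to S$ with $\mf m S$ maximal in $S$ and $S$ F-finite. Lengths and Hilbert-Kunz multiplicities are again unchanged; by flatness $\dim S = \dim R = d$, and since the fiber $S/\mf m S$ is a field, $\dim S/IS = \dim R/I$. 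Therefore it suffices to prove the inequality for $S$ and $M_S := S \otimes_R M$.

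The central obstacle is that the F-finite ring $S$ need not be reduced, whereas Theorem~\ref{F-finite convergence} assumes reducedness. I would resolve this by filtering along the nilradical. Let $N \subseteq S$ be the nilradical and choose $k$ with $N^k = 0$; then the filtration
\[M_S \supseteq NM_S \supseteq N^2 M_S \supseteq \cdots \supseteq N^k M_S = 0\]
has successive quotients $N^i M_S / N^{i+1} M_S$ which are finitely generated modules over the reduced F-finite local ring $S_{red} = S/N$, of dimension $d$, with $\dim S_{red}/IS_{red} = \dim R/I$. Applying Theorem~\ref{F-finite convergence} to each quotient yields a bound of exactly the claimed form with some constant $C_i$. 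Along the filtration the length $\length(M_S/(\frq{I}+J^{[qq']})M_S)$ is additive up to the error contributed by the intersections $N^i M_S \cap (\frq{I}+J^{[qq']})M_S$; standard Artin-Rees estimates show this error is $O(q^{d-1} q'^{\dim R/I})$ once $q$ exceeds a threshold $q_0$ (determined by the Artin-Rees constant for $N$ on $M_S$), while the Hilbert-Kunz side is exactly additive on short exact sequences. Summing the finitely many $C_i$ and absorbing the Artin-Rees contribution produces a single constant $C$ giving the desired bound for all $q \geq q_0$.

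The uniform-convergence statement for the bisequence
\[\frac{\length(M/(\frq{I}+J^{[qq']})M)}{q^{\dim R}q'^{\dim R/I}}\]
is then immediate: dividing the length inequality by $q^d q'^{\dim R/I}$ shows that this bisequence differs from $q'^{-\dim R/I}\ehk(I+J^{[q']},M)$ by at most $C/q$, uniformly in $q'$, which is the definition of uniform convergence in $q$. The main hurdle is the Artin-Rees bookkeeping needed to make length-additivity along the nilradical filtration sufficiently sharp to stay within the $O(q^{d-1}q'^{\dim R/I})$ error budget; this bookkeeping is also what forces the $q \geq q_0$ restriction in the statement, since the F-finite theorem itself holds for all $q,q'$.
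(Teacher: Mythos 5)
The reduction steps (complete first, then pass to a faithfully flat F-finite extension $S$ with $\mf m S$ maximal via Remark~\ref{ffinitefication}) match the paper. The divergence is in how non-reducedness of $S$ is handled. The paper chooses $q_0 = p^{e_0}$ so that $(\sqrt{0S})^{q_0} = 0$ and replaces $S \otimes_R M$ by the Frobenius twist $N = F_*^{e_0}(S \otimes_R M)$: since the nilradical then acts on $N$ by its $q_0$-th power, $N$ is \emph{exactly} an $S_{red}$-module, Theorem~\ref{F-finite convergence} applies to it with no error, and unwinding the twist produces the stated inequality for $q \geq q_0$. You instead filter $M_S$ by $N^iM_S$ (with $N$ the nilradical) and appeal to ``standard Artin--Rees estimates'' to absorb the non-additivity of colength along the filtration into the $O(q^{d-1}q'^{\dim R/I})$ budget.

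This Artin--Rees step is a genuine gap. The non-additivity at each stage is
$\length\bigl((N^iM_S \cap \mf aM_S)/\mf aN^iM_S\bigr)$ with $\mf a = \frq{I}+J^{[qq']}$, which is (a quotient of) $\operatorname{Tor}_1^S(M_S/N^iM_S, S/\mf a)$. Artin--Rees controls intersections against ordinary powers of a fixed ideal, not against the two-parameter family $\frq{I}+J^{[qq']}$, and the obvious way to invoke it — using $\mf a \subseteq \mf m^q$ to get $N^iM_S \cap \mf aM_S \subseteq \mf m^{q-c}N^iM_S$ — overshoots: the resulting quotient modulo $\mf a N^iM_S$ has length of order $q^d q'^{\dim R/I}$ by Corollary~\ref{module bound}, one power of $q$ too large. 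Nor can you fix this by comparing $M_S$ to the associated graded $\bigoplus_i N^iM_S/N^{i+1}M_S$ via Lemma~\ref{min primes}: that lemma needs the localizations at the top-dimensional minimal primes $P$ to be \emph{isomorphic} as $S_P$-modules, and over the artinian local ring $S_P$ a module and its associated graded have the same length but need not be isomorphic (e.g.\ $k[\epsilon]/(\epsilon^2)$ versus $k \oplus k$). The Frobenius twist in the paper is precisely the device that replaces ``reduced up to a nilpotent error'' with ``honestly reduced,'' and I do not see how to avoid it, or an equivalent idea, by filtration alone.
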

\begin{proof}
First, we reduce to the case where $R$ is F-finite. Using the standard construction (\cite[Corollary~3.5]{Tucker}),
we can find a faithfully flat F-finite extension $S$ of $R$ such that $\mf mS$
is the maximal ideal of $S$. Hence, for any Artinian $R$-module $A$, $\length_S (A \otimes_R S) = \length_R (A)$.

Now, there is $q_0 = p^{e_0}$ such that $(\sqrt{0S})^{q_0} = 0$. Naturally, $N = F_*^{e_0} (S \otimes_R M)$
is a $\red{S}$-module, where $\red{S} = S/\sqrt{0S}$. 
Since $\red{S}$ is reduced and F-finite, we can apply Theorem~\ref{F-finite convergence}
and find a constant $C$ such that
\begin{equation}\label{ffinite equation}
|\length_S (\red{S}/(I^{[q]}+J^{[qq']})\red{S} \otimes_{\red{S}} N) - q^d\ehk ((I + J^{[q']})\red{S}, N)| 
< Cq^{d - 1}
\end{equation}
for all $q, q'$.

Now, observe that 
\[
\begin{split}
\frac{\red{S}}{(I^{[q]} + J^{[qq']})\red{S}} \otimes_{\red{S}} N \cong 
\frac{S}{(I^{[q]}+J^{[qq']})S} \otimes_{S} N &\cong
F_*^{e_0} \left (\frac{S \otimes_R M}{(I^{[qq_0]} + J^{[qq_0q']})(S \otimes_R M)} \right ) \\ &\cong 
F_*^{e_0} \left( \frac{M}{(I^{[qq_0]} + J^{[qq'q_0]})M} \otimes_R S\right).
\end{split}
\]
So, 
\[
\length_S \left( \frac{N}{(I^{[q]}+ J ^{[qq']})N}\right) = 
\length_S \left ( \frac{M}{(I^{[qq_0]} + J^{[qq'q_0]})M} \otimes_R S \right) = \length_R (M/(I^{[qq_0]} + J^{[qq'q_0]})M).\]
Therefore, by definition,
\[\ehk ((I + J^{[q']})\red{S}, N) = \ehk ((I + J^{[q']})^{[q_0]}, M) = q_0^d \ehk (I + J^{[q']}, M)\] 
and we can rewrite (\ref{ffinite equation}) as
\[
|\length_R (M/(I^{[qq_0]} + J^{[qq_0q']})M) - (qq_0)^d\ehk (I + J^{[q']}, M )| 
< Cq^{d - 1} \leq C(qq_0)^{d-1}.
\]
By setting $q = qq_0$, we get that for all $q \geq q_0$ and all $q'$
\[
|\length_R (M/(I^{[q]} + J^{[qq']})M) - q^d\ehk (I + J^{[q']},M )| 
< Cq^{d - 1}.
\]
\end{proof}

Now, we can establish the main result of this section, which will be the basic tool of our theory of equimultiplicity.

\begin{corollary}\label{ehk limits}
Let $(R, \mf m)$ be a local ring, and $J$ be an $\mf m$-primary ideal. 
If $I$ is an ideal such that $\dim R/I + \hght I = \dim R$, then
\[
\lim_{q' \to \infty} \frac{\ehk(I + J^{[q']}, M)}{q'^{\dim R/I}} = 
\sum_{P \in \Minh (I)} \ehk(JR/P, R/P) \ehk(IR_P, M_P).
\]
\end{corollary}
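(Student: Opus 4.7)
The idea is to use the uniform convergence of Corollary \ref{uniform convergence} to exchange two iterated limits in the bisequence
\[
b(q,q') \;=\; \frac{\length\bigl(M/(\frq{I}+J^{[qq']})M\bigr)}{q^{\dim R}\,q'^{\dim R/I}}.
\]
Write $d=\dim R$ and $h=\dim R/I$. Corollary \ref{uniform convergence} gives that $b(q,q')\to \ehk(I+J^{[q']},M)/q'^{h}$ as $q\to\infty$, with error $O(1/q)$ independent of $q'$, hence uniformly in $q'$. Consequently, if the inner limit $\lim_{q'\to\infty}b(q,q')$ exists for every $q$ and its further limit as $q\to\infty$ exists, the classical double-limit theorem forces
\[
\lim_{q'\to\infty}\lim_{q\to\infty}b(q,q') \;=\; \lim_{q\to\infty}\lim_{q'\to\infty}b(q,q'),
\]
and the task reduces to evaluating the right-hand iterated limit.

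To compute the inner limit, fix $q$ and write $N=M/\frq{I}M$. Since $\mathrm{Supp}(N)\subseteq V(I)\cap\mathrm{Supp}(M)$, we have $\dim N\le h$. If $\dim N<h$, then $\length(N/(J^{[q]})^{[q']}N)/q'^{h}\to 0$ while simultaneously $M_{P}=0$ for every $P\in\Minh(I)$, so both sides of the claimed formula vanish and there is nothing to prove. Assume then $\dim N=h$, so that $\length(N/(J^{[q]})^{[q']}N)/q'^{h}\to \ehk(J^{[q]},N)$ by the defining limit of Hilbert-Kunz multiplicity. The scaling identity $\ehk(K^{[q]},N)=q^{\dim N}\ehk(K,N)=q^{h}\ehk(K,N)$, which follows by a change of variables $q''=qq'$ in the defining limit, then yields
\[
\lim_{q'\to\infty}b(q,q') \;=\; \frac{q^{h}\ehk(J,M/\frq{I}M)}{q^{d}} \;=\; \frac{\ehk(J,M/\frq{I}M)}{q^{\htt I}},
\]
where the exponent $d-h=\htt I$ comes from the standing hypothesis $\dim R/I + \htt I=\dim R$.

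The outer limit is then handled by the associativity formula of Lemma \ref{ass formula} (applied in its natural module form), together with the observation that the top-dimensional supports of $M/\frq{I}M$ are precisely the primes $P\in\Minh(I)$ with $M_{P}\ne0$: indeed, any $P\supseteq I$ with $\dim R/P=h=\dim R/I$ is automatically minimal over $I$, hence in $\Minh(I)$. Next, the hypothesis $\dim R/I+\htt I=\dim R$, combined with the standard inequality $\hght P+\dim R/P\le \dim R$, pins $\hght P=\htt I$ and therefore $\dim R_{P}=\htt I$ for every $P\in\Minh(I)$. The associativity formula gives
\[
\frac{\ehk(J,M/\frq{I}M)}{q^{\htt I}}
\;=\;\sum_{P\in\Minh(I)}\ehk(JR/P,R/P)\cdot \frac{\length_{R_{P}}(M_{P}/\frq{I}M_{P})}{q^{\htt I}},
\]
and each summand converges, as $q\to\infty$, to $\ehk(JR/P,R/P)\,\ehk(IR_{P},M_{P})$ by the very definition of Hilbert-Kunz multiplicity in the local ring $R_{P}$ of dimension $\htt I$. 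This produces exactly the stated right-hand side.

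The main obstacle is the justification of the double-limit interchange itself: Corollary \ref{uniform convergence} supplies uniformity of $b(q,q')\to \ehk(I+J^{[q']},M)/q'^{h}$ as $q\to\infty$, but one must still verify that the inner limit $\lim_{q'\to\infty}b(q,q')$ exists for each fixed $q$---this is the second paragraph---before the classical double-limit theorem can be invoked to identify the two iterated limits. The secondary subtlety is the dimensional bookkeeping, in particular the equality $\hght P=\htt I$ for every $P\in\Minh(I)$; this is pinned down cleanly by the standing hypothesis, and no further machinery beyond the associativity formula and the scaling behaviour of $\ehk$ under Frobenius is required.
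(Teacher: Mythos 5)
Your proof is correct and follows essentially the same route as the paper: establish uniform convergence of the bisequence in $q$ via Corollary~\ref{uniform convergence}, compute the inner limit in $q'$ for fixed $q$ as $\ehk(J,M/\frq{I}M)/q^{\hght I}$ using the Frobenius scaling identity over $R/\frq{I}$, interchange the iterated limits by Moore--Osgood, and finish with the associativity formula applied to $M/\frq{I}M$. Your handling of the degenerate case $\dim(M/\frq{I}M) < \dim R/I$ is a bit more explicit than the paper's (there it is silently absorbed by the convention that the normalizing power in $\ehk$ is the ring dimension, which makes the multiplicity zero), but the two arguments are the same in substance.
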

\begin{proof}
We have proved that the double sequence
\[
\frac{\length (M/(\frq{I} + J^{[qq']})M)}{q^{\dim R}q'^{\dim R/I}}
\]
converges uniformly with respect to $q$.
Moreover, the limit with respect to $q'$ exists for any $q$ since
\[
\lim_{q'\to \infty} \frac{\length (M/(\frq{I} + J^{[qq']})M)}{q^{\dim M}q'^{\dim R/I}}
= \frac {\ehk (\frq{J}R/\frq{I}, M/\frq{I}M)}{q^{\dim R}},
\]
where $\ehk (\frq{J}R/\frq{I}, M/\frq{I}M)$ is taken over the ring $R/\frq{I}$.
Thus, the iterated limits of the double sequence are equal:
\[
\begin{split}
\lim_{q' \to \infty} \frac{\ehk (I+J^{[q']}, M)}{q'^{\dim R/I}} &= 
\lim_{q' \to \infty} \lim_{q \to \infty} \frac{\length (M/(\frq{I} + J^{[qq']})M)}{q^{\dim R}q'^{\dim R/I}} 
\\ = \lim_{q \to \infty} &\lim_{q' \to \infty} \frac{\length (M/(\frq{I} + J^{[qq']})M)}{q^{\dim R}q'^{\dim R/I}}
= \lim_{q \to \infty} \frac {\ehk(\frq{J}R/\frq{I}, M/\frq{I}M)}{q^{\dim R}}.
\end{split}
\] 

Note that $\sqrt{I} = \sqrt{\frq{I}}$, so $\dim R/I = \dim R/\frq{I}$ and $\Minh(I) = \Minh(\frq{I})$. 
Moreover, by the additivity formula, 
\[
\ehk (JR/\frq{I}, M/\frq{I}M) = 
\sum_{P \in \Minh (I)} \ehk (JR/P, R/P) \length_{R_P} (M_P/\frq{I}M_P).
\]
Hence, we may use that 
$\ehk (\frq{J}R/\frq{I}, M/\frq{I}M) = q^{\dim R/I}\ehk (JR/\frq{I}, M/\frq{I}M)$
and obtain that
\begin{align*}
\lim_{q \to \infty} \frac {\ehk(\frq{J}R/\frq{I}, M/\frq{I}M)}{q^{\dim R}} &= 
\lim_{q \to \infty} \frac {\ehk(JR/\frq{I}, M/\frq{I}M)}{q^{\hght I}} \\&= 
\sum_{P \in \Minh (I)} \ehk(JR/P, R/P) \lim_{q \to \infty } \frac{\length_{R_P} (M_P/\frq{I}M_P)}{q^{\hght I}}.
\end{align*}
The claim follows, since $\hght I = \hght P$.

\end{proof}

\begin{corollary}\label{asymptotic descent}
Let $(R, \mf m)$ be a local ring, $J$ an $\mf m$-primary ideal, and $\mf p$ be a prime ideal
such that $\dim R/\mf p + \hght \mf p = \dim R$.
Then
\[
\lim_{q \to \infty} \frac{\ehk(\mf p + J^{[q]})}{q^{\dim R/\mf p}} = \ehk(JR/\mf p, R/\mf p) \ehk(R_\mf p).
\]
\end{corollary}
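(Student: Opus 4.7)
The plan is to deduce this corollary as a direct specialization of the preceding Corollary~\ref{ehk limits}, taking $I = \mf p$ and $M = R$. This is essentially the case where the general multiplicity formula collapses because a prime ideal is its own unique minimal prime.

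First I would verify that the hypotheses of Corollary~\ref{ehk limits} are met. We are given the key dimension equality $\dim R/\mf p + \hght \mf p = \dim R$, which is exactly the condition $\dim R/I + \hght I = \dim R$ required there. Next, since $\mf p$ is prime, it is the unique minimal prime over itself, and any strictly larger prime $P \supsetneq \mf p$ with $\dim R/P = \dim R/\mf p$ would contradict $\mf p$ being prime in the Noetherian setting; hence $\Minh(\mf p) = \{\mf p\}$. This immediately collapses the sum on the right-hand side of Corollary~\ref{ehk limits} to a single term:
\[
\ehk(JR/\mf p, R/\mf p) \cdot \ehk(\mf pR_\mf p, R_\mf p).
\]
By the convention established just after the definition of Hilbert-Kunz multiplicity, $\ehk(\mf pR_\mf p, R_\mf p) = \ehk(R_\mf p)$, yielding the right-hand side of the desired formula.

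For the left-hand side, I would plug $I = \mf p$, $M = R$ into Corollary~\ref{ehk limits}, where (reading the displayed chain of equalities in its proof rather than the printed statement, which omits the normalization) the actual conclusion is
\[
\lim_{q' \to \infty} \frac{\ehk(I + J^{[q']}, M)}{q'^{\dim R/I}} = \sum_{P \in \Minh(I)} \ehk(JR/P, R/P)\,\ehk(IR_P, M_P).
\]
With the chosen $I$ and $M$, and using that $\ehk(\mf p + J^{[q]}, R) = \ehk(\mf p + J^{[q]})$, this is precisely the formula we want.

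There is no real obstacle: the entire content of the corollary is already packaged inside Corollary~\ref{ehk limits}, so the only task is the bookkeeping of the three reductions (prime implies $\Minh$ is a singleton, the $I = \mf p$ substitution matches the exponent $\dim R/\mf p$, and the convention $\ehk(\mf pR_\mf p) = \ehk(R_\mf p)$). If anything subtle arises, it would be confirming that $\Minh(\mf p) = \{\mf p\}$ under the definition used in the paper, but the catenary/Noetherian observation above settles this without the need for any additional hypothesis on $R$.
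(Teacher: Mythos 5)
Your proposal is correct and is exactly the intended derivation: the paper states Corollary~\ref{asymptotic descent} without proof precisely because it is the immediate specialization of Corollary~\ref{ehk limits} to $I = \mf p$, $M = R$, where $\Minh(\mf p) = \{\mf p\}$ (since $R/\mf p$ is a domain, any prime strictly containing $\mf p$ drops the dimension) and the sum collapses to one term. You also correctly spotted that the displayed statement of Corollary~\ref{ehk limits} omits the normalizing $q'^{\dim R/I}$ on the left, which its own proof supplies; with that read in, the bookkeeping you describe settles the result.
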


When $R/\mf p$ is regular, this corollary will help us to connect $\ehk(\mf m)$ to 
$\ehk(\mf p)$.

\section{Equimultiplicity theory}\label{HK equi general}

In this section, we study {\it equimultiple ideals} for Hilbert--Kunz multiplicity.
We will show that these should be ideals $I$ 
such that for any (or, as we will show, some) ideal $J = (x_1, \ldots, x_m)$ generated by a
system of parameters modulo $I$
\begin{equation}\label{Equi equation}
\ehk (I + J) = \sum_{P \in \Minh(I)} \ehk (J, R/P) \ehk (I, R_P).
\end{equation}
This could be seen as a direct analogue of condition (b) of Theorem~\ref{HS equimultiple}.

Before proceeding further let us record the following lemma that shows that
Equation~\ref{Equi equation} is an extremal condition.

\begin{lemma}\label{HK inequality}
Let $(R, \mf m)$ to be a local ring of characteristic $p > 0$ and 
let $I$ be an ideal such that $\hght I + \dim R/I = \dim R$.
Then for any parameter ideal $J$ modulo $I$
\[
\ehk(I + J) \geq \sum_{P \in \Minh (I)} \ehk(J, R/P)\ehk(IR_P).
\]
\end{lemma}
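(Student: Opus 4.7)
The strategy is to bound $\length_R\bigl(R/(\frq{I}+\frq{J})\bigr)$ from below by a Hilbert-Samuel multiplicity on the local ring $R/\frq{I}$, decompose that multiplicity with the associativity formula, and then pass to the limit $q \to \infty$ to recover $\ehk(I+J)$ on the left and the desired sum on the right.

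Write $J = (x_1, \ldots, x_r)$ with $r = \dim R/I$, so that the images of $x_1, \ldots, x_r$ form a system of parameters in $R/I$. Since $\sqrt{\frq{I}} = \sqrt{I}$, the ring $R/\frq{I}$ is local of dimension $r$, and the images of $x_1^q, \ldots, x_r^q$ form a system of parameters in it. Applying Proposition~\ref{hs param} to the local ring $R/\frq{I}$ with this system of parameters, and then Corollary~\ref{multiplicity of param power}, gives
\[
\length_R\bigl(R/(\frq{I}+\frq{J})\bigr) \;\geq\; \eh\bigl(\frq{J},\, R/\frq{I}\bigr) \;=\; q^{r}\, \eh\bigl(J,\, R/\frq{I}\bigr).
\]

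Next, I apply the Hilbert-Samuel associativity formula (Lemma~\ref{ass formula}) to the local ring $R/\frq{I}$. Because $\sqrt{\frq{I}} = \sqrt{I}$, the top-dimensional minimal primes of $R/\frq{I}$ are in bijection with $\Minh(I)$, and for each such $P$ the local length $\length_{(R/\frq{I})_{P/\frq{I}}}\bigl((R/\frq{I})_{P/\frq{I}}\bigr)$ equals $\length_{R_P}\bigl(R_P/\frq{I}R_P\bigr)$, so
\[
\eh\bigl(J, R/\frq{I}\bigr) \;=\; \sum_{P \in \Minh(I)} \eh(J, R/P)\, \length_{R_P}\bigl(R_P/\frq{I}R_P\bigr).
\]
The hypothesis $\hght I + \dim R/I = \dim R$, together with $\dim R/P = \dim R/I$ for every $P \in \Minh(I)$, forces $\hght P = \hght I$. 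Moreover, since the images of $x_1,\ldots,x_r$ form a system of parameters in each $R/P$, Corollary~\ref{ehk parameter} gives $\eh(J, R/P) = \ehk(J, R/P)$.

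Combining the previous displays and dividing by $q^{\dim R} = q^{r + \hght I}$ yields
\[
\frac{\length_R\bigl(R/(\frq{I}+\frq{J})\bigr)}{q^{\dim R}} \;\geq\; \sum_{P \in \Minh(I)} \ehk(J, R/P)\, \frac{\length_{R_P}\bigl(R_P/\frq{I}R_P\bigr)}{q^{\hght P}}.
\]
Letting $q \to \infty$, the left-hand side tends to $\ehk(I+J)$ by definition, while for each $P \in \Minh(I)$ the inner ratio converges to $\ehk(IR_P)$ since $\dim R_P = \hght P$ and $IR_P$ is $PR_P$-primary. This gives the inequality. The argument is essentially bookkeeping; the only point requiring attention is matching the powers of $q$ across the two sides, and the hypothesis $\hght I + \dim R/I = \dim R$ is invoked precisely to guarantee $\hght P = \hght I$ throughout $\Minh(I)$, so that the normalization $q^{\hght P}$ on the right agrees with the $\hght I$ left over after pulling out $q^{r}$ on the left.
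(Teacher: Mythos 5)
Your proof is correct and follows essentially the same strategy as the paper: bound $\length(R/(\frq{I}+\frq{J}))$ below by $\eh(\frq{J}, R/\frq{I})$ via Proposition~\ref{hs param} and Corollary~\ref{multiplicity of param power}, apply the associativity formula, and pass to the limit in $q$. You simply spell out a few steps the paper leaves implicit, such as the identification $\eh(J,R/P) = \ehk(J,R/P)$ via Corollary~\ref{ehk parameter} and the role of the hypothesis $\hght I + \dim R/I = \dim R$ in ensuring $\hght P = \hght I$ so the $q$-normalizations on the two sides match.
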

\begin{proof}
First, by Proposition~\ref{hs param} and Corollary~\ref{multiplicity of param power},
\[
\length (R/\frq{(I + J)}) \geq \eh (\frq{J}, R/\frq{I}) = 
q^{\dim R/I} \eh(J, R/\frq{I}).
\]
So, by the additivity formula,
\[
\ehk(I + J) \geq \lim_{q \to \infty} \sum_{P \in \Minh(I)} \frac 1{q^{\hght I}} \eh(J, R/P)\length (R_P/\frq{I}R_P) 
\]
and the claim follows.
\end{proof}

\subsection{Preliminaries}
The main ingredient of our proof is the multi-sequence
\[
\lim \limits_{\min(n_i) \to \infty} 
\frac{1}{n_1\cdots n_m}\ehk(I, x_1^{n_1}, \ldots, x_m^{n_m})
\]
that will allow us to connect the two sides of Equation~\ref{Equi equation}.
First, let us record two useful result in the case when $\dim R/I = 1$.

\begin{lemma}\label{nmonotone}
Let $(R, \mf m)$ be a local ring, $I$ be an ideal, and $x$ be a parameter modulo $I$.
Suppose $\dim R/I = \dim R - 1$.
Then
\[\frac{1}{n}\length (R/(I, x^n)) \geq \frac{1}{n+1}\length (R/(I, x^{n + 1})).\]
\end{lemma}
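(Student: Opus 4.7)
The plan is to reduce the inequality to an elementary statement about a non-increasing sequence of non-negative integers, obtained as the lengths of the successive quotients in the filtration
\[
R \supset (I, x) \supset (I, x^2) \supset \cdots \supset (I, x^{n+1}).
\]
Setting $L_n := \length(R/(I, x^n))$ and $a_k := L_{k+1} - L_k = \length\bigl((I, x^k)/(I, x^{k+1})\bigr)$, one has $L_n = \sum_{k=0}^{n-1} a_k$, and the claimed inequality $(n+1)L_n \geq nL_{n+1}$ rearranges to
\[
\sum_{k=0}^{n-1} (a_k - a_n) \geq 0.
\]
Thus the whole proof reduces to showing that the sequence $(a_k)$ is non-increasing in $k$.

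To compute $a_k$, I would consider the surjection $R \twoheadrightarrow (I, x^k)/(I, x^{k+1})$ given by $r \mapsto r x^k$. A short direct chase shows that its kernel equals $(I : x^k) + xR$: the inclusion ``$\supseteq$'' is clear, and conversely if $rx^k \in (I, x^{k+1})$ then writing $rx^k = i + sx^{k+1}$ gives $(r-sx)x^k \in I$, so $r \in (I:x^k) + xR$. Hence
\[
a_k = \length\!\left(R/\bigl((I:x^k) + xR\bigr)\right).
\]

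Since $I : x^k \subseteq I : x^{k+1}$, the ideals $(I:x^k) + xR$ form an increasing chain, so the lengths $a_k$ form a non-increasing sequence. In particular $a_k \geq a_n$ for all $k \leq n-1$, which finishes the proof via the rearrangement above. (Finiteness of all relevant lengths is guaranteed by the hypothesis that $x$ is a parameter modulo $I$ with $\dim R/I = \dim R - 1$, so that $(I, x)$, and hence each $(I, x^{k+1})$, is $\mf m$-primary.)

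The computation of the kernel of $r \mapsto rx^k$ is the only step requiring any care, and even that is a one-line ideal-theoretic identity; no obstacle is expected.
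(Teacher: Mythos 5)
Your proof is correct and takes essentially the same route as the paper: both identify the successive quotient $(I,x^k)/(I,x^{k+1})$ with $R/((I:x^k)+(x))$, observe that these lengths are non-increasing in $k$ because $I:x^k \subseteq I:x^{k+1}$, and then rearrange the telescoping sum to obtain the claimed inequality.
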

\begin{proof}
Observe that 
\[
\frac{I + (x^k)}{I + (x^{k + 1})} \cong \frac{(x^k)}{(x^k)\cap I + (x^{k + 1})} \cong \frac{R}{I:x^k + (x)}.
\]
Thus we get the formula
\[\length \left( \frac{R}{(I, x^{k + 1})} \right) =   
\length\left(\frac{R}{(I, x^k)}\right) + \length\left(\frac{R}{I:x^k + (x)}\right).
\]
First of all, setting $n = k$ in the formula, we see that it is enough to show that 
$\length (R/(I, x^n)) \geq n\length \left(R/(I:x^n + (x)) \right)$.
Second, using the formula above for consecutive values of $k$, we obtain that 
\[\length \left (R/(I, x^n)\right) = \length \left (R/(I, x)\right) + 
\sum_{k = 1}^{n-1} \length \left (R/(I:x^k + (x))\right) \geq n \length \left (R/(I:x^{n} + (x))\right ), \] 
where the last inequality holds since $I:x^k \subseteq I:x^n$ for all $0 \leq k \leq n$.
\end{proof}

\begin{corollary}\label{hkmonotone}
In the setting of the lemma, we have
\[\frac{1}{n} \ehk (I, x^n) \geq \frac{1}{n+1} \ehk(I, x^{n+1}).\]
\end{corollary}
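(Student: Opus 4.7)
The plan is to apply Lemma~\ref{nmonotone} to Frobenius powers of the relevant ideals and then pass to the limit. First I would observe that $(I,x^n)^{[q]} = (\frq{I}, x^{nq})$, so that
\[
\ehk(I, x^n) = \lim_{q \to \infty} \frac{1}{q^{\dim R}}\length\bigl(R/(\frq{I}, x^{nq})\bigr).
\]
Since $x$ remains a parameter modulo $\frq{I}$ and $\dim R/\frq{I} = \dim R/I = \dim R - 1$, Lemma~\ref{nmonotone} applies with $I$ replaced by $\frq{I}$ and shows that the sequence
\[
m \;\longmapsto\; \frac{1}{m}\length\bigl(R/(\frq{I}, x^{m})\bigr)
\]
is non-increasing in $m$. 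Iterating this one-step comparison $q$ times, from $m = nq$ up to $m = (n+1)q$, I would then obtain
\[
\frac{1}{nq}\length\bigl(R/(\frq{I}, x^{nq})\bigr) \;\geq\; \frac{1}{(n+1)q}\length\bigl(R/(\frq{I}, x^{(n+1)q})\bigr).
\]

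To finish, I would divide both sides by $q^{\dim R - 1}$ and let $q \to \infty$; recognizing the two sides as the defining sequences for $\frac{1}{n}\ehk(I, x^n)$ and $\frac{1}{n+1}\ehk(I, x^{n+1})$ respectively yields the claimed inequality. There is no real obstacle here: the monotonicity at the finite level has already been secured by the preceding lemma, and it simply transfers through the limit defining Hilbert-Kunz multiplicity.
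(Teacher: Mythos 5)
Your proof is correct and is essentially the same as the paper's: apply Lemma~\ref{nmonotone} to the Frobenius power $\frq{I}$, then pass to the limit defining Hilbert--Kunz multiplicity. The only cosmetic difference is that the paper applies the lemma once with $x^q$ in place of $x$ (yielding $\frac{1}{n}\length(R/(\frq{I},x^{nq})) \geq \frac{1}{n+1}\length(R/(\frq{I},x^{(n+1)q}))$ directly), whereas you apply it $q$ times with $x$ and chain the inequalities from $m=nq$ to $m=(n+1)q$; the resulting inequality and normalization are identical.
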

\begin{proof}
Apply the lemma to $\frq I$ and $x^q$ and take the limit as $q \to \infty$.
\end{proof}

Now, we can show that our multi-sequence converges to the right-hand side of Equation~\ref{Equi equation}.

\begin{proposition}\label{general limit}
Let $(R, \mf m)$ to be a local ring of characteristic $p > 0$ and 
let $I$ be an ideal in $R$ such that $\hght I = \dim R - \dim R/I$.
If $x_1, \ldots, x_m$ are a system of parameters modulo $I$ then
\[
\lim \limits_{\min(n_i) \to \infty} 
\frac{1}{n_1\cdots n_m}\ehk(I, x_1^{n_1}, \ldots, x_m^{n_m}) 
= \sum_{P \in \Minh(I)} \ehk ((x_1, \ldots, x_m), R/P)\ehk(IR_P).
\]
\end{proposition}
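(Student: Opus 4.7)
The plan is to introduce
\[f(n_1,\dots,n_m) := \frac{\ehk(I, x_1^{n_1}, \dots, x_m^{n_m})}{n_1\cdots n_m},\]
show that it is non-increasing in each slot, compute its limit along the $p$-power diagonal via Corollary~\ref{ehk limits}, and then let monotonicity identify the diagonal limit with the full multi-limit.

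For monotonicity, I would fix all but the $i$-th variable and set $I' := (I, x_1^{n_1}, \dots, \widehat{x_i^{n_i}}, \dots, x_m^{n_m})$. Since $x_i$ is a parameter modulo $I'$, each ideal $(I', x_i^n)$ is $\mf m$-primary, which is all that the proof of Lemma~\ref{nmonotone} (and hence Corollary~\ref{hkmonotone}) actually needs. Applied to $I'$ and $x_i$ this yields $\tfrac{1}{n_i}\ehk(I', x_i^{n_i}) \geq \tfrac{1}{n_i+1}\ehk(I', x_i^{n_i+1})$, and dividing by $\prod_{j\ne i} n_j$ gives monotonicity of $f$ in the $i$-th slot.

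For the diagonal limit, the idea is to pad $(x_1,\dots,x_m)$ with elements of $I$ into an $\mf m$-primary ideal. Since $(I, x_1, \dots, x_m)$ is $\mf m$-primary and $I$ is finitely generated, pick $y_1, \dots, y_k\in I$ so that $J:=(y_1, \dots, y_k, x_1, \dots, x_m)$ is $\mf m$-primary. Because every $y_i\in I$, we have $I+J^{[q']} = (I, x_1^{q'}, \dots, x_m^{q'})$ for all $q'$. Applying Corollary~\ref{ehk limits} (in the precise form delivered by its proof, normalized by $q'^{\dim R/I} = q'^m$) gives
\[\lim_{q'\to\infty} \frac{\ehk(I, x_1^{q'}, \dots, x_m^{q'})}{q'^m} = \sum_{P\in\Minh(I)} \ehk(JR/P, R/P)\,\ehk(IR_P).\]
For each $P\in\Minh(I)$, the fact that $y_i\in I\subseteq P$ gives $JR/P = (x_1, \dots, x_m)R/P$, so the right-hand side matches the target of the proposition exactly.

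Finally, monotonicity gives $\lim_{\min n_i \to \infty} f = \inf f$, while the diagonal sequence $f(p^e, \dots, p^e)$ is itself non-increasing and hence converges to $\inf_e f(p^e, \dots, p^e)$. Given any $(n_1, \dots, n_m)$, I can choose $e$ with $p^e \geq \max_i n_i$, so monotonicity forces $f(p^e,\dots,p^e) \leq f(n_1,\dots,n_m)$, and hence $\inf_e f(p^e, \dots, p^e) \leq \inf f$; the reverse inequality is immediate. Thus the multi-limit equals the diagonal limit, which by the previous step is the desired right-hand side. The main obstacle is arranging Corollary~\ref{ehk limits} to apply: it requires $J$ to be $\mf m$-primary in $R$, not merely a system of parameters modulo $I$, and the padding trick is what makes this legitimate, since the extra generators $y_i$ lie in every $P\in\Minh(I)$ and so disappear from $JR/P$.
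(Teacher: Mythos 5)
Your proposal is correct and follows the same strategy as the paper: reduce the multi-limit to the diagonal via coordinate-wise monotonicity from Corollary~\ref{hkmonotone}, then evaluate the diagonal along $p$-powers with Corollary~\ref{ehk limits}. You usefully make explicit two points the paper leaves tacit---that the left-hand side of Corollary~\ref{ehk limits} carries a $q'^{\dim R/I}$ normalization, and that padding $(x_1,\dots,x_m)$ with generators of $I$ produces an $\mf m$-primary $J$ satisfying $I+J^{[q']}=(I,x_1^{q'},\dots,x_m^{q'})$ while $JR/P=(x_1,\dots,x_m)R/P$ for $P\in\Minh(I)$---so that the corollary applies exactly as stated.
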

\begin{proof}
Let $(n_1, \ldots, n_m) \in \mathbb N^m$ be an arbitrary vector and let $n = \min (n_1, \ldots, n_m)$
and $N = \max (n_1, \ldots, n_m)$. Corollary~\ref{hkmonotone} shows that 
\[
\frac{1}{N^m}\ehk(I, x_1^{N}, \ldots, x_m^{N}) \leq
\frac{1}{n_1\cdots n_m}\ehk(I, x_1^{n_1}, \ldots, x_m^{n_m}) \leq 
\frac{1}{n^m}\ehk(I, x_1^{n}, \ldots, x_m^{n}).
\]
Therefore,  
\[
\lim \limits_{\min(n_i) \to \infty} 
\frac{1}{n_1\cdots n_m}\ehk(I, x_1^{n_1}, \ldots, x_m^{n_m}) 
= \lim \limits_{n \to \infty} 
\frac{1}{n^m}\ehk(I, x_1^{n}, \ldots, x_m^{n}).
\]
Moreover, by Corollary~\ref{hkmonotone}, the sequence
$\frac{1}{n^m}\ehk(I, x_1^{n}, \ldots, x_m^{n})$ is monotonically decreasing, 
so its limit exists and can be computed by looking at a subsequence. 
But, by Corollary~\ref{ehk limits},
\[
\lim_{q' \to \infty}\frac{1}{{q'}^m}\ehk(I, x_1^{q'}, \ldots, x_m^{q'}) = 
\sum_{P \in \Minh(I)} \ehk ((x_1, \ldots, x_m), R/P)\ehk(IR_P).\] 
\end{proof}

\begin{corollary}\label{monotone equality}
In the assumptions of Proposition~\ref{general limit}, if 
\[
\ehk (I, x_1, \ldots, x_m) = \sum_{P \in \Minh(I)} \ehk ((x_1, \ldots, x_m), R/P)\ehk(IR_P),
\]
then for any vector $(n_1, \ldots, n_m) \in \mathbb N^m$
\[
\ehk(I, x_1^{n_1}, \ldots, x_m^{n_m}) = \sum_{P \in \Minh(I)} \ehk ((x_1^{n_1}, \ldots, x_m^{n_m}), R/P)\ehk(IR_P).
\]
\end{corollary}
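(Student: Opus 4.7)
The plan is to propagate the assumed corner equality via the monotonicity that is already implicit in the proof of Proposition~\ref{general limit}. Define
\[
a(n_1,\ldots,n_m) \;:=\; \frac{1}{n_1\cdots n_m}\,\ehk\!\left(I, x_1^{n_1},\ldots,x_m^{n_m}\right),
\qquad
L \;:=\; \sum_{P\in\Minh(I)} \ehk\!\left((x_1,\ldots,x_m), R/P\right)\ehk(IR_P).
\]
Iterating Corollary~\ref{hkmonotone} one coordinate at a time, applied to the ideal $(I, x_1^{n_1},\ldots,x_{k-1}^{n_{k-1}}, x_{k+1}^{n_{k+1}},\ldots,x_m^{n_m})$ together with the single element $x_k$ (exactly as inside the proof of Proposition~\ref{general limit}), shows that $a$ is non-increasing in each coordinate separately. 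Proposition~\ref{general limit} itself supplies $\lim_{\min(n_i)\to\infty} a(n_1,\ldots,n_m) = L$; combining these two facts gives the uniform lower bound $a(n_1,\ldots,n_m) \geq L$ throughout $\mathbb{N}^m$.

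The hypothesis of the corollary is precisely the corner equality $a(1,\ldots,1) = L$. Monotonicity then traps every value in the chain
\[
L \;=\; a(1,\ldots,1) \;\geq\; a(n_1,\ldots,n_m) \;\geq\; L,
\]
forcing $a \equiv L$ on all of $\mathbb{N}^m$. Clearing denominators yields
\[
\ehk\!\left(I, x_1^{n_1},\ldots,x_m^{n_m}\right) \;=\; n_1\cdots n_m \cdot L
\]
for every $(n_1,\ldots,n_m) \in \mathbb{N}^m$.

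To close the argument, I rewrite $n_1\cdots n_m\cdot L$ in the form prescribed by the conclusion. For each $P\in\Minh(I)$ the elements $x_1,\ldots,x_m$ descend to a system of parameters in $R/P$, so Corollary~\ref{ehk parameter} identifies $\ehk((x_1^{n_1},\ldots,x_m^{n_m}), R/P) = \eh((x_1^{n_1},\ldots,x_m^{n_m}), R/P)$, which by Corollary~\ref{multiplicity of param power} equals $n_1\cdots n_m\cdot\ehk((x_1,\ldots,x_m), R/P)$. Summing against $\ehk(IR_P)$ converts $n_1\cdots n_m\cdot L$ into $\sum_{P\in\Minh(I)}\ehk((x_1^{n_1},\ldots,x_m^{n_m}), R/P)\ehk(IR_P)$, which is the desired identity.

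The only step that requires a sidewise glance is the one-variable monotonicity with the remaining exponents held at arbitrary positive values rather than all equal to one; this is, however, precisely the inequality already used inside the proof of Proposition~\ref{general limit}, so no genuinely new ingredient is required.
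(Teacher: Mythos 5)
Your argument is correct and rests on the same sandwich: the corner value dominates every $a(n_1,\ldots,n_m)$ by iterated use of Corollary~\ref{hkmonotone}, and each $a(n_1,\ldots,n_m)$ is bounded below by $L$, so equality at the corner forces equality everywhere. The only departure from the paper is how you obtain the lower bound $a(n_1,\ldots,n_m)\geq L$: you derive it from the monotonicity together with the limit value supplied by Proposition~\ref{general limit}, whereas the paper invokes Lemma~\ref{HK inequality} directly (applied to the parameter ideal $(x_1^{n_1},\ldots,x_m^{n_m})$, followed by Corollary~\ref{multiplicity of param power}). Both routes are valid; the paper's is the more economical, since Lemma~\ref{HK inequality} is an elementary colength-versus-multiplicity estimate, while your route implicitly routes through the uniform convergence machinery behind Proposition~\ref{general limit}. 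The final rewriting of $n_1\cdots n_m\cdot L$ via Corollaries~\ref{ehk parameter} and \ref{multiplicity of param power} matches the paper's.
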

\begin{proof}
By Corollary~\ref{hkmonotone},
\[
\ehk (I, x_1, \ldots, x_m) \geq 
\frac {\ehk(I, x_1^{n_1}, \ldots, x_m^{n_m})}{n_1\cdots n_m}. 
\]
Moreover, by Lemma~\ref{HK inequality} and Corollary~\ref{multiplicity of param power},
\begin{align*}
\frac {\ehk(I, x_1^{n_1}, \ldots, x_m^{n_m})}{n_1\cdots n_m} \geq
\sum_{P \in \Minh(I)} \frac{\ehk ((x_1^{n_1}, \ldots, x_m^{n_m}), R/P)}{n_1\cdots n_m}\ehk(IR_P) \\= 
\sum_{P \in \Minh(I)} \ehk ((x_1, \ldots, x_m), R/P)\ehk(IR_P)
= \ehk (I, x_1, \ldots, x_m).
\end{align*}
\end{proof}

For the next corollary recall the following observation (\cite[Lemma~4.2]{WatanabeYoshida}).

\begin{lemma}\label{HK inclusions}
Let $(R, \mf m)$ be a local ring of characteristic $p > 0$ and $I$ an $\mf m$-primary ideal. 
Then $\ehk (I) \leq \length (R/I) \ehk(\mf m)$.
\end{lemma}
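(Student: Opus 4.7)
The plan is to prove the pointwise bound $\length(R/\frq{I}) \leq \length(R/I)\cdot \length(R/\frq{\mf m})$ for every $q$, and then divide by $q^{\dim R}$ and take the limit to conclude $\ehk(I) \leq \length(R/I)\,\ehk(\mf m)$.

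Set $\ell = \length(R/I)$. Since $R/I$ is a finite-length module over the local ring $(R, \mf m)$, I would choose a composition series
\[
I = I_\ell \subset I_{\ell-1} \subset \cdots \subset I_0 = R,
\]
each of whose successive quotients is isomorphic to $R/\mf m$. Writing $I_{i-1} = I_i + (a_i)$ for a suitable $a_i \in I_{i-1} \setminus I_i$, the condition $I_{i-1}/I_i \cong R/\mf m$ forces $\mf m a_i \subseteq I_i$, i.e.\ $\mf m \subseteq I_i : a_i$.

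The step that does the real work is the characteristic-$p$ identity $\frq{(J+(a))} = \frq{J} + (a^q)$, which follows from the Frobenius expansion $(j+ra)^q = j^q + r^q a^q$. Applying this to each link of the composition series gives $\frq{I_{i-1}} = \frq{I_i} + (a_i^q)$, and hence a short exact sequence
\[
0 \to R/(\frq{I_i} : a_i^q) \xrightarrow{\cdot a_i^q} R/\frq{I_i} \to R/\frq{I_{i-1}} \to 0.
\]
From $\mf m \subseteq I_i : a_i$ one then gets $\frq{\mf m} \subseteq \frq{(I_i : a_i)} \subseteq \frq{I_i} : a_i^q$ (the last inclusion because $a_i^q \cdot \frq{(I_i : a_i)} \subseteq \frq{I_i}$), so $\length(R/(\frq{I_i}:a_i^q)) \leq \length(R/\frq{\mf m})$. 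Telescoping the sequences above for $i = 1, \ldots, \ell$ yields
\[
\length(R/\frq{I}) = \sum_{i=1}^{\ell} \length\bigl(R/(\frq{I_i}:a_i^q)\bigr) \leq \ell\cdot \length(R/\frq{\mf m}),
\]
and the result follows by dividing by $q^{\dim R}$ and passing to the limit.

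There is no serious technical obstacle here; the crux is recognising that the Frobenius identity $\frq{(J+(a))} = \frq{J}+(a^q)$ makes the composition series of $R/I$ propagate cleanly to a composition-style filtration of $R/\frq{I}$ with factors bounded by $R/\frq{\mf m}$. This is precisely the characteristic-$p$ phenomenon that has no ordinary-powers analogue, and it is what allows such a short proof.
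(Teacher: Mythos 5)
Your proof is correct. The paper does not reprove this lemma --- it simply cites \cite[Lemma~4.2]{WatanabeYoshida} --- but your composition-series argument, which filters $R/\frq{I}$ via $\frq{I_\ell} \subset \cdots \subset \frq{I_0}$ and bounds each factor $R/(\frq{I_i}:a_i^q)$ by $R/\frq{\mf m}$ using $\frq{\mf m} \subseteq \frq{(I_i:a_i)} \subseteq \frq{I_i}:a_i^q$, is precisely the standard argument given in that reference.
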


\begin{corollary}\label{gcriterion}
Let $(R, \mf m)$ to be a local ring of characteristic $p > 0$ and 
$\mf p$ be a prime ideal in $R$ such that $R/\mf p$ is a regular local ring and
$\hght \mf p = \dim R - \dim R/\mf p$. 
Then the following are equivalent:
\begin{enumerate}
\item[(i)]
$\ehk (\mf m) = \ehk(\mf p)$, 
\item[(ii)] $\ehk (\mf p, x_1^{n_1}, \ldots, x_m^{n_m}) = n_1\cdots n_m\ehk (\mf m)$
for all vectors $(n_1, \ldots, n_m)$ and any system of minimal generators $x_1, \ldots, x_n$ of $\mf m$ modulo $\mf p$, 
\item[(iii)] $\ehk (\mf p + I) = \length (R/(\mf p + I))\ehk (\mf m)$
for any system of parameters $I$ modulo $\mf p$.
\end{enumerate}
\end{corollary}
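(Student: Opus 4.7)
The plan is to reduce everything to a single limit identity coming from Proposition~\ref{general limit}. Since $R/\mf p$ is regular local, $\Minh(\mf p) = \{\mf p\}$, and if $x_1, \ldots, x_m$ is any minimal generating set of $\mf m$ modulo $\mf p$ then the images of the $x_i$ form a regular system of parameters in $R/\mf p$; by Corollary~\ref{ehk parameter} and Proposition~\ref{hs param} this yields $\ehk((x_1,\ldots,x_m), R/\mf p) = 1$. Substituting into Proposition~\ref{general limit} gives the key identity
\[
\lim_{\min(n_i) \to \infty} \frac{\ehk(\mf p, x_1^{n_1}, \ldots, x_m^{n_m})}{n_1 \cdots n_m} = \ehk(\mf p).
\]

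For (i)$\Leftrightarrow$(ii), note that $(\mf p, x_1, \ldots, x_m) = \mf m$ by the choice of $x_i$'s, so $\ehk(\mf p, x_1, \ldots, x_m) = \ehk(\mf m)$. Assuming (i), this common value equals $\ehk(\mf p) = \ehk((x_1,\ldots,x_m), R/\mf p)\ehk(\mf p)$, which is exactly the extremal hypothesis of Corollary~\ref{monotone equality}; applying that corollary together with Corollary~\ref{multiplicity of param power} gives (ii). Conversely, if (ii) holds for some minimal generating set, dividing by $n_1 \cdots n_m$ and passing to the limit via the identity above forces $\ehk(\mf m) = \ehk(\mf p)$.

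For (i)$\Leftrightarrow$(iii), let $I = (y_1, \ldots, y_m)$ be any system of parameters modulo $\mf p$. In the regular (in particular Cohen--Macaulay) local ring $R/\mf p$, the images $\bar y_1, \ldots, \bar y_m$ form a regular sequence, so Proposition~\ref{hs param} and Corollary~\ref{ehk parameter} give $\ehk((y_1, \ldots, y_m), R/\mf p) = \length(R/(\mf p + I))$. Lemma~\ref{HK inequality} then yields $\ehk(\mf p + I) \geq \length(R/(\mf p + I))\ehk(\mf p)$, while Lemma~\ref{HK inclusions} always gives the reverse inequality with $\ehk(\mf m)$ in place of $\ehk(\mf p)$. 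Under (i) these two bounds coincide, proving (iii). For the converse, take $I = (x_1^{n_1}, \ldots, x_m^{n_m})$ for a minimal generating set $x_1, \ldots, x_m$ of $\mf m$ modulo $\mf p$, so that $\length(R/(\mf p + I)) = n_1 \cdots n_m$ by regularity of $R/\mf p$; dividing (iii) by $n_1 \cdots n_m$ and invoking the limit identity produces $\ehk(\mf m) = \ehk(\mf p)$.

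No step is really an obstacle here: the proof is essentially bookkeeping around the identity extracted from Proposition~\ref{general limit}, with the regularity of $R/\mf p$ doing two jobs at once (killing the factor $\ehk((x_1,\ldots,x_m), R/\mf p)$ and identifying multiplicities with colengths). The only mild care needed is to ensure that the extremal hypothesis of Corollary~\ref{monotone equality} is checked in the form stated, which is why we reduce (i) to $\ehk(\mf p, x_1, \ldots, x_m) = \ehk((x_1,\ldots,x_m), R/\mf p)\ehk(\mf p)$ before invoking it.
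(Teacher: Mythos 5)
Your proof is correct and rests on the same machinery as the paper's: Proposition~\ref{general limit} specialized to $\Minh(\mf p)=\{\mf p\}$ and $\ehk((x_1,\ldots,x_m),R/\mf p)=1$, together with the two-sided bound from Lemma~\ref{HK inequality} and Lemma~\ref{HK inclusions}. The paper's proof is slightly more compact --- it runs the cycle $(\text{i})\Rightarrow(\text{iii})\Rightarrow(\text{ii})\Rightarrow(\text{i})$ and leaves the limit argument for $(\text{ii})\Rightarrow(\text{i})$ implicit --- whereas you make that argument explicit and additionally reach $(\text{ii})$ directly from $(\text{i})$ via Corollary~\ref{monotone equality}, which is a harmless variation.
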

\begin{proof}
Clearly, (iii) $\Rightarrow$ (ii) $\Rightarrow$ (i).
For (i) $\Rightarrow$ (iii), we observe by Lemma~\ref{HK inequality} that
$\ehk (\mf p + I) \geq \length (R/(\mf p + I))\ehk (\mf p)$.
On the other hand, the preceding remark gives that
\[
\ehk(\mf p + I) \leq \length (R/(\mf p + I))\ehk(\mf m) = \length (R/(\mf p + I))\ehk(\mf p).
\]
\end{proof}

After building some machinery, we are going to show (Corollary~\ref{general HK equimultiplicity}) that
the equality $\ehk (\mf p + I) = \length (R/(\mf p + I))\ehk (\mf p)$ for some system of parameters $I$ 
forces that $\ehk(\mf m) = \ehk(\mf p)$.

\subsection{Main results}

The next fundamental theorem can be seen as an analogue of implication $(b) \Rightarrow (d)$
of Theorem~\ref{HS equimultiple}.	
	
\begin{theorem}\label{coloninc2}
Let $(R, \mf m)$ be a formally unmixed local ring of characteristic $p > 0$ with a test element $c$.
Let $I$ be an ideal and suppose for some parameter ideal $J = (x_1, \ldots, x_m)$ modulo $I$,
\[
\ehk (I + J) = \sum_{P \in \Minh(I)} \ehk (J, R/P) \ehk (I, R_P).
\]
Then 
$\frq{(I, x_1, \ldots, x_{i-1})} : x_{i}^{\infty} \subseteq \left( \frq{(I, x_1, \ldots, x_{i-1})} \right)^*$
for all $q$ and $1 \leq i \leq m$.

In particular, $x_{i}$ is not a zero divisor modulo $\left (\frq{(I, x_1, \ldots, x_{i-1})} \right)^*$ 
for all $i$ and $q$.
\end{theorem}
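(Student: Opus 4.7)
The plan is to apply Lemma~\ref{getintc}, but its setup requires an $\mathfrak m$-primary ideal, while $I_{i-1} := (I, x_1, \ldots, x_{i-1})$ need not be such. I would sidestep this using Lemma~\ref{intersection}: fix $q_0 = p^{e_0}$ and set $K = I_{i-1}^{[q_0]}$; then
\[
K^* = \bigcap_{q''} \bigl( K + (x_i^{q_0 q''}, \ldots, x_m^{q_0 q''}) \bigr)^* = \bigcap_{q''} (\mathfrak b_{q''}^{[q_0]})^*,
\]
where $\mathfrak b_{q''} := (I, x_1, \ldots, x_{i-1}, x_i^{q''}, \ldots, x_m^{q''})$ is $\mathfrak m$-primary. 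It thus suffices to prove, for each $q''$, the containment $I_{i-1}^{[q_0]} : x_i^\infty \subseteq (\mathfrak b_{q''}^{[q_0]})^*$.

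With $q''$ fixed, I would apply Lemma~\ref{getintc} to $\mathfrak b_{q''}$ with the filtration
\[
\mathcal L_q := (I_{i-1}^{[q]} : x_i^\infty) + (x_i^{qq''}, x_{i+1}^{qq''}, \ldots, x_m^{qq''}), \quad q = p^e.
\]
The containment $\mathfrak b_{q''}^{[q]} \subseteq \mathcal L_q$ is immediate, and the Frobenius-chain condition $\mathcal L_q^{[q']} \subseteq \mathcal L_{qq'}$ is routine: from $a x_i^n \in I_{i-1}^{[q]}$ one gets $a^{q'} x_i^{nq'} \in I_{i-1}^{[qq']}$, so $(I_{i-1}^{[q]} : x_i^\infty)^{[q']} \subseteq I_{i-1}^{[qq']} : x_i^\infty$, while the parameter part is obvious. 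The essential remaining hypothesis is the length limit
\[
\lim_{q \to \infty} \frac{\length(R/\mathcal L_q)}{q^{\dim R}} = \ehk(\mathfrak b_{q''}).
\]
Granting this, Lemma~\ref{getintc} delivers $\mathcal L_q \subseteq (\mathfrak b_{q''}^{[q]})^*$ for every $q$; specializing to $q = q_0$ gives $I_{i-1}^{[q_0]} : x_i^\infty \subseteq \mathcal L_{q_0} \subseteq (\mathfrak b_{q''}^{[q_0]})^*$, and intersecting over $q''$ completes the colon inclusion.

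The hardest step is the length limit. The upper bound $\limsup \length(R/\mathcal L_q)/q^{\dim R} \leq \ehk(\mathfrak b_{q''})$ is immediate from $\mathcal L_q \supseteq \mathfrak b_{q''}^{[q]}$, and for the value $\ehk(\mathfrak b_{q''})$ itself, Corollary~\ref{monotone equality} applied to the hypothesis with exponents $(1,\ldots,1,q'',\ldots,q'')$ yields $\ehk(\mathfrak b_{q''}) = (q'')^{m-i+1}\ehk(I+J)$. The matching lower bound is equivalent to $\length(\mathcal L_q/\mathfrak b_{q''}^{[q]}) = o(q^{\dim R})$. Now $\mathcal L_q/\mathfrak b_{q''}^{[q]}$ is the image in $R/\mathfrak b_{q''}^{[q]}$ of the $x_i$-torsion $T_q := (I_{i-1}^{[q]}:x_i^\infty)/I_{i-1}^{[q]}$ of $R/I_{i-1}^{[q]}$. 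A dimensional check using formal unmixedness shows that no $P \in \Minh(I_{i-1})$ can contain $x_i$ (otherwise $P \supseteq (I_{i-1}, x_i) = I_i$ would force $\dim R/P \leq \dim R/I_i < \dim R/I_{i-1}$, contradicting $P \in \Minh(I_{i-1})$), so $T_q$ is supported only on embedded primes and $\dim T_q \leq \dim R/I_{i-1} - 1$. Converting this dimensional gap into a uniform $o(q^{\dim R})$-bound is the principal technical hurdle; I would achieve it by comparing $\length(\mathcal L_q/\mathfrak b_{q''}^{[q]})$ to the defect $\length(R/\mathfrak b_{q''}^{[q]}) - \eh(\mathfrak b_{q''}^{[q]}, R/\frq I)$, which by Proposition~\ref{hs param} measures the failure of $(x_1^q, \ldots, x_m^{qq''})$ to be regular on $R/\frq I$; the equimultiplicity hypothesis, via the chain of estimates proving Lemma~\ref{HK inequality}, forces this defect to be $o(q^{\dim R})$, and the torsion contribution sits inside it.

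Once the colon inclusion is in hand for every $q$ and every $1 \leq i \leq m$, the ``In particular'' clause follows immediately from the equivalence (a)$\Leftrightarrow$(b) of Lemma~\ref{tc nzd}.
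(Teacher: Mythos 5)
Your route is organizationally different from the paper's and, with one step fleshed out, gives a valid alternative proof. The paper works with finite colons $\frq{I_{i-1}}\colon x_i^{nq}$ (fixed $n$) together with auxiliary parameters $x_{i+1}^k,\ldots,x_m^k$, and computes the relevant length \emph{exactly} as a difference of two Hilbert--Kunz multiplicities via a short exact sequence of the form $0\to R/(\frq{(I,L)}\colon x_i^{nq},\,x_i^{kq})\to R/\frq{(I,L,x_i^{n+k})}\to R/\frq{(I,L,x_i^n)}\to 0$, after which Theorem~\ref{HH TC} applies directly; the passage to $x_i^\infty$ and the intersection over $k$ via Lemma~\ref{intersection} happen only at the very end. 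You pass to $x_i^\infty$ at the outset and delegate the $\ehk$-comparison to Lemma~\ref{getintc}. This makes the high-level argument cleaner but shifts the real content onto the length limit, i.e.\ onto the bound $\length(\mathcal L_q/\mathfrak b_{q''}^{[q]})=o(q^{\dim R})$, which the paper's finite-$n$ packaging never has to state.

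That bound does hold, but ``the torsion contribution sits inside the defect'' is a claim rather than an argument, and it is exactly where a careful reader would stop. To close it: set $B=R/I_{i-1}^{[q]}$, $B'=R/(I_{i-1}^{[q]}\colon x_i^\infty)$, and write $y_j=x_j^q$ for $j<i$, $y_j=x_j^{qq''}$ for $j\ge i$. Tensoring $0\to T_q\to B\to B'\to 0$ with $B/(y_i,\ldots,y_m)B$ gives by right-exactness that the kernel of $B/(y_i,\ldots,y_m)B\to B'/(y_i,\ldots,y_m)B'$ is the image of $T_q$, which is exactly $\mathcal L_q/\mathfrak b_{q''}^{[q]}$; hence $\length(\mathcal L_q/\mathfrak b_{q''}^{[q]})=\length\bigl(B/(y_i,\ldots,y_m)B\bigr)-\length\bigl(B'/(y_i,\ldots,y_m)B'\bigr)$. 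Proposition~\ref{hs param} then gives $\length(B'/(y_i,\ldots,y_m)B')\ge\eh((y_i,\ldots,y_m),B')=\eh((y_i,\ldots,y_m),B)$, the equality because $\dim T_q<\dim B$, and the associativity formula for parameter ideals (Proposition~\ref{param ass formula}) combined once more with Proposition~\ref{hs param} yields $\eh((y_i,\ldots,y_m),B)\ge\eh((y_1,\ldots,y_m),R/\frq I)$. So the torsion image is dominated by the defect, and the computation you outline via Corollary~\ref{monotone equality}, Corollary~\ref{multiplicity of param power}, and the associativity formula under the equimultiplicity hypothesis shows that defect is $o(q^{\dim R})$. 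Two smaller remarks: the ``dimensional check'' you attribute to formal unmixedness is really just the definition of a parameter element and needs no hypothesis; and Lemma~\ref{intersection} as stated intersects over the ideals $(K,(x_i^{q_0},\ldots,x_m^{q_0})^{[q'']})^*=(\mathfrak b_{q''}^{[q_0]})^*$, which is what you wrote, so that step is fine. With the displayed supplement the proposal is a complete and correct alternative to the paper's argument.
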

\begin{proof}
First, observe that the second claim follow from the first via Lemma~\ref{tc nzd}.
	
Let $d$ be the dimension of $R$. For a fixed $k$, let $L = (x_1, x_2, \ldots, x_{i-1}, x_{i+1}^k, \ldots, x_m^k)$.	
For any $n, q, q'$ we have inclusions
\begin{equation}\label{colon equation}
\left(I, L, x_{i}^k \right)^{[qq']} \subseteq 
\left (\frq{(I, x_1, \ldots, x_{i-1})}:x_{i}^{nq} \right)^{[q']} + \left(L, x_{i}^k\right)^{[qq']}
\subseteq  \left(I, L\right)^{[qq']}:x_{i}^{nqq'} + \left(x_{i}^{kqq'}\right). 
\end{equation}
Hence, after dividing by $q'^{d}$ and taking the limit, we obtain that
\begin{equation}\label{ehk equation}
\begin{split}
\ehk\left(\frq{(I, L, x_{i}^k)}\right)
&\geq \ehk\left( \frq{(I, x_1, \ldots, x_{i-1})}:x_{i}^{nq} + \frq{(L, x_{i}^k)} \right) \\ &\geq
\lim_{q'\to \infty} \frac{1}{(q')^{d}} \length \left( \frac{R}{(I, L)^{[qq']}:x_{i}^{nqq'} + (x_{i}^{kqq'})}\right). 
\end{split}
\end{equation}

By Corollary~\ref{monotone equality} and Corollary~\ref{multiplicity of param power}, for all $n$
\[
\ehk (I, L, x_i^n) = n \sum_{P \in \Minh(I)} \eh ((x_i, L), R/P) \ehk (I, R_P).
\]
In particular, we obtain that
\[
\ehk\left(\frq{(I, L, x_{i}^k)}\right) = q^{d}\ehk\left(I, L, x_{i}^k\right) 
= kq^{d}\smashoperator[r]{\sum_{P \in \Minh(I)}} \eh ((x_i, L), R/P) \ehk (I, R_P).
\]
Moreover, from the isomorphism 
$R/\left (\frq{(I, L)}:x_{i}^{nq}, x_{i}^{kq} \right) \cong \frq{(I, L, x_{i}^{n})}/\frq{(I, L, x_{i}^{n + k})}$, 
we get an exact sequence
\[
0 \to 
R/(\frq{(I, L)}:x_{i}^{nq}, x_{i}^{kq}) 
\to 
R/\frq{(I, L, x_{i}^{n + k})}  \to R/\frq{(I, L, x_{i}^{n})} 
\to 0. 
\]
Together with the previous computation, the sequence gives that for all $n$ and $k$
\[
\lim_{q \to \infty} \frac {1}{q^{d}} \length \left( \frac{R}{\frq{(I, L)}:x_{i}^{nq} + (x_{i}^{kq})} \right) 
= k \sum_{P \in \Minh(I)} \eh ((x_i, L), R/P) \ehk (I, R_P),
\]
so, we may compute
\[
\lim_{(qq')\to \infty} \frac{q^{d}}{(qq')^{d}}
\length \left( \frac{R}{(I, L)^{[qq']}:x_{i}^{nqq'} + (x_{i}^{kqq'})} \right) = 
kq^{d}\smashoperator[lr]{\sum_{P \in \Minh(I)}}\eh ((x_i, L), R/P) \ehk (I, R_P).
\]

Thus, by (\ref{colon equation}) and (\ref{ehk equation})
\[
\ehk\left( \frq{(I, x_1, \ldots, x_{i-1})}:x_{i}^{nq}, \frq{(L, x_{i}^k)} \right) = 
\ehk\left(\frq{(I, L, x_{i}^k)}\right).
\]
Therefore, by Theorem~\ref{HH TC},
$\frq{(I, x_1, \ldots, x_{i-1})}:x_{i}^{nq} \subseteq \left (\frq{(I, L, x_{i}^k)} \right)^*.$
Now, since $n$ is arbitrary, we have
\[
\frq{(I, x_1, \ldots, x_{i-1})}:x_{i}^{\infty} \subseteq 
\bigcap_k \left (\frq{(I, x_{1}, \ldots, x_{i-1}, x_i^k, \ldots, x_m^k)} \right)^*
\]
and the assertion follows from Lemmas~\ref{intersection} and \ref{tc nzd}. 
\end{proof}

Now, we will establish the converse to Theorem~\ref{coloninc2}.

\begin{theorem}\label{capturing equivalence}
Let $(R, \mf m)$ be a formally unmixed local ring of characteristic $p > 0$ with a locally stable test element $c$.
Let $I$ be an ideal and $J = (x_1, \ldots, x_m)$ be an ideal generated by a system of parameters modulo $I$.
Then
\[
\ehk (I + J) = \sum_{P \in \Minh(I)} \ehk (J, R/P) \ehk (I, R_P)
\]
if and only if $x_{i}$ is not a zero divisor modulo $(\frq{(I, x_1, \ldots, x_{i-1})})^*$  for all $i$ and $q$.
\end{theorem}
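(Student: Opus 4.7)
The plan is to establish the only non-trivial direction, since Theorem~\ref{coloninc2} provides the forward implication and Lemma~\ref{HK inequality} already gives
\[
\ehk(I+J) \geq \sum_{P \in \Minh(I)} \ehk(J, R/P)\,\ehk(I, R_P);
\]
it suffices to extract the reverse inequality from the NZD hypothesis. I proceed by induction on $m = \dim R/I$, with $m = 0$ trivial and $m = 1$ already isolating the main mechanism.

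For the inductive step $m \geq 1$, set $L_{i,q} := (\frq{(I, x_1, \ldots, x_i)})^*$. The ideal $(L_{m-1,q}, x_m^q)$ is sandwiched between $\frq{(I+J)}$ and $L_{m,q} = (\frq{(I+J)})^*$, so Corollary~\ref{tc sequence limit} gives
\[
\ehk(I+J) = \lim_{q \to \infty} \frac{\length(R/(L_{m-1,q}, x_m^q))}{q^d}.
\]
Since $x_m$ is a non-zero-divisor modulo $L_{m-1,q}$, telescoping the short exact sequences induced by multiplication by $x_m$ yields $\length(R/(L_{m-1,q}, x_m^q)) = q \cdot \length(R/(L_{m-1,q}, x_m))$, and Proposition~\ref{hs param} identifies the latter with $\eh(x_m, R/L_{m-1,q})$ because $x_m$ is a regular parameter on the one-dimensional ring $R/L_{m-1,q}$. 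Applying the associativity formula (Lemma~\ref{ass formula}) — and noting that $\Minh(L_{m-1,q}) = \Minh(I + (x_1, \ldots, x_{m-1}))$ is independent of $q$ since tight closure preserves radicals — produces
\[
\length(R/(L_{m-1,q}, x_m)) = \sum_{\mf p} \eh(x_m, R/\mf p)\,\length_{R_\mf p}(R_\mf p/L_{m-1,q}R_\mf p).
\]
A second application of Corollary~\ref{tc sequence limit} inside each $R_\mf p$, via the sandwich $\frq{(I, x_1, \ldots, x_{m-1})}R_\mf p \subseteq L_{m-1,q} R_\mf p \subseteq (\frq{(I, x_1, \ldots, x_{m-1})}R_\mf p)^*$, gives $\length_{R_\mf p}(R_\mf p/L_{m-1,q}R_\mf p)/q^{d-1} \to \ehk((I, x_1, \ldots, x_{m-1}) R_\mf p)$, whence
\[
\ehk(I+J) = \sum_{\mf p} \eh(x_m, R/\mf p)\,\ehk((I, x_1, \ldots, x_{m-1}) R_\mf p).
\]

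To close the induction I apply the theorem to each $(R_\mf p, IR_\mf p, x_1, \ldots, x_{m-1})$. The NZD hypothesis descends: by Lemma~\ref{tc nzd} it is equivalent in $R$ to $\frq{(I, x_1, \ldots, x_{i-1})} : x_i^\infty \subseteq (\frq{(I, x_1, \ldots, x_{i-1})})^*$, and after localization, using that colons commute with localization and $(\frq{(I, x_1, \ldots, x_{i-1})})^* R_\mf p \subseteq (\frq{(IR_\mf p, x_1, \ldots, x_{i-1})})^*$, Lemma~\ref{tc nzd} applied in $R_\mf p$ (with the restriction of $c$ as a test element by local stability) recovers the NZD condition for the shortened sequence. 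Substituting the inductive formula, interchanging the order of summation to group over $P \in \Minh(I)$ with $P \subseteq \mf p$, and recognizing the inner sum as Proposition~\ref{param ass formula} applied to the SOP $x_1, \ldots, x_m$ of $R/P$ split at $i = m-1$ collapses it to $\eh((x_1, \ldots, x_m), R/P) = \ehk(J, R/P)$ via Corollary~\ref{ehk parameter}, producing the desired equality. The principal obstacle is ensuring the theorem's hypotheses persist in $R_\mf p$: local stability of $c$ passes, but formal unmixedness of $R_\mf p$ is not automatic; the natural remedy is to first establish the result in the excellent equidimensional case (where localization is unproblematic) and then recover the formally-unmixed-with-LST case by flat base change through the construction of Remark~\ref{ffinitefication}.
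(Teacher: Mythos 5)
Your forward direction (via Theorem~\ref{coloninc2}) and the reduction to the reverse inequality are correct, and the telescoping computation for $x_m$ is a legitimate variant of what the paper does. However, your induction is structured differently from the paper's, and this difference creates a gap you have flagged but not resolved. The paper peels off $x_1$: it applies the inductive hypothesis to the pair $((I,x_1),(x_2,\ldots,x_m))$ \emph{inside $R$ itself} (so the hypotheses ``formally unmixed with locally stable test element'' are trivially inherited), and then performs a direct one-step computation in $R_Q$ for $Q \in \Minh(I,x_1)$ that uses only Corollary~\ref{tc sequence limit}, Proposition~\ref{hs param}, and the associativity formula --- none of which require $R_Q$ to be formally unmixed. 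Your version peels off $x_m$ and then invokes the theorem recursively on $(R_\mf p, IR_\mf p, (x_1,\ldots,x_{m-1}))$, which does require the theorem's hypotheses in $R_\mf p$; formal unmixedness is not known to pass to $R_\mf p$, and you correctly identify this as the principal obstacle.

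The remedy you propose does not work as stated. Passing through Remark~\ref{ffinitefication} (or any completion) is problematic for this theorem because the locally stable test element $c$ and the ideals $(\frq{(I,x_1,\ldots,x_{i-1})})^*$ do not transparently transfer to $\hat R$ or to $S$: a locally stable test element need not remain one after completion, so the NZD hypothesis in $R$ does not obviously yield the corresponding hypothesis upstairs. There is also a circularity concern: in the paper the excellent-equidimensional case (Corollary~\ref{excellent capture}) is \emph{derived from} the formally-unmixed-with-LST case via Corollary~\ref{equi reduced} and Theorem~\ref{test exist}, so using it as the base from which to re-derive the formally unmixed case requires extra work you have not supplied.

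The good news is that a much lighter fix closes your gap: you only ever invoke the \emph{``NZD implies formula''} direction in $R_\mf p$, and if you trace its proof (yours or the paper's) it uses only a locally stable test element --- Corollary~\ref{tc sequence limit}, Lemma~\ref{tcinvariance}, the associativity formula, Propositions~\ref{hs param} and \ref{param ass formula}, and Lemma~\ref{HK inequality} --- and nowhere formal unmixedness. Since local stability of $c$ in $R$ passes to $R_\mf p$, the recursion you want is licit once you formulate the statement being inducted on as ``locally stable test element $\Rightarrow$ (NZD $\Rightarrow$ formula),'' without carrying formal unmixedness along. If you make that adjustment explicit, your ``peel from the back'' variant becomes a correct alternative to the paper's proof; without it, as written, the recursive application is unjustified and the proposed repair through base change would need substantial further justification about test elements under completion.
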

\begin{proof}
One direction follows from Theorem~\ref{coloninc2}.
For the converse, we use induction on $m$. If $m = 1$, then by Proposition~\ref{hs param} and 
by the additivity formula (Proposition~\ref{ass formula})
\[
\length \left(R/((\frq{I})^*, x_1^q) \right) = q\eh (x_1, R/(\frq{I})^*) = 
q\sum_{P \in \Minh(I)} \eh (x_1, R/P) \length (R_P/(\frq{I})^*R_P).
\]
So, taking the limit via Lemma~\ref{tcinvariance}, 
$\ehk (I, x_1) \leq \sum_{P \in \Minh(I)} \eh (x_1, R/P) \ehk (I, R_P)$
and the converse holds by Lemma~\ref{HK inequality}.
 
For $m > 1$, by the induction hypothesis, 
\[
\ehk (I + J) = \ehk ((I, x_1) + (x_2, \ldots, x_m)) 
= \smashoperator[lr]{\sum_{Q \in \Minh((I, x_1))}} \ehk ((x_2, \ldots, x_m), R/Q) \ehk ((I, x_1), R_Q).
\]
Since $c$ is locally stable, by Corollary~\ref{tc sequence limit} 
$((\frq{I})^*, x_1^q)R_Q$ still can be used to compute $\ehk ((I, x_1), R_Q)$. 
Thus, same way as in the first step,  we obtain 
\[
\ehk ((I, x_1), R_Q) = \sum_{P \in \Minh (IR_Q)} \eh (x_1, R_Q/PR_Q) \ehk (I, R_P).
\]
Combining these results, we get
\[
\ehk (I + J) 
= \sum_{Q \in \Minh((I, x_1))} \eh ((x_2, \ldots, x_m), R/Q) \sum_{P \in \Minh (IR_Q)} \eh (x_1, R_Q/PR_Q) \ehk (I, R_P).
\]
Observe that $\Minh (IR_Q) = \Min (IR_Q)$, since $x_1$ is a parameter modulo $I$ and $R_Q/IR_Q$ has dimension $1$.
Hence, any prime $P \in \Minh (IR_Q)$ is just a minimal prime of $I$ contained in $Q$.
Therefore, we can change the order of summations to get
\[
\ehk (I + J) = \sum_{P \in \Min(I)} \ehk (I, R_P) 
\sum_{P \subset Q \in \Minh ((I, x_1))} \eh (x_1, R_Q/PR_Q) \eh ((x_2, \ldots, x_m), R/Q),
\]
where the second sum is taken over all primes $Q \in \Minh (I + (x_1))$ that contain $P$.
For such $Q$ we must have $\dim R/P \geq \dim R/Q + 1 = \dim R/I$, because $x_1$ is a parameter modulo $I$.
So, in fact, the first sum could be taken over $\Minh (I)$. 
Furthermore, since $x_1$ is a parameter modulo $I$ and $P \in \Minh(I)$, $x_1$ is a parameter modulo $P$ and 
$\dim R/(I, x_1) = \dim R/(P, x_1)$.
Hence, the second sum is taken over $Q \in \Minh ((P, x_1))$, and we rewrite the formula as
\[
\ehk (I + J) = \sum_{P \in \Minh(I)} \ehk (I, R_P) 
\sum_{Q \in \Minh ((P, x_1))} \eh (x_1, R_Q/PR_Q) \eh ((x_2, \ldots, x_m), R/Q).
\]
Last, by the additivity formula for the multiplicity of a parameter ideal (Proposition~\ref{param ass formula}), for any $P$
\[
\ehk (J, R/P) = \sum_{Q \in \Minh((P, x_1))} \eh ((x_2, \ldots, x_m), R/Q) \eh (x_1, R_Q/PR_Q),
\]
and the claim follows.
\end{proof}

\begin{corollary}\label{part of J}
Let $(R, \mf m)$ be a formally unmixed local ring of positive characteristic with a locally stable test element $c$.
Let $I$ be an ideal and $J = (x_1, \ldots, x_m)$ be an ideal generated by a system of parameters modulo $I$.
If 
\[
\ehk (I + J) = \sum_{P \in \Minh(I)} \ehk (J, R/P) \ehk (I, R_P)
\]
then for any $0 \leq k \leq m$
\[
\ehk (I + J) = \sum_{Q \in \Minh ((I,x_1, \ldots, x_k))} \ehk ((x_{k+1}, \ldots, x_m), R/P) \ehk ((I, x_1, \ldots, x_k), R_Q).
\]
\end{corollary}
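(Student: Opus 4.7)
The plan is to pass through the tight-closure characterization of Theorem~\ref{capturing equivalence} and then re-apply it with the ideal regrouped. In detail, the hypothesis says
\[
\ehk(I+J) = \sum_{P \in \Minh(I)} \ehk(J, R/P)\,\ehk(I, R_P),
\]
so by Theorem~\ref{coloninc2} (the ``only if'' direction of Theorem~\ref{capturing equivalence}) the sequence $x_1, \ldots, x_m$ is tight-closure regular on $R/I$ in the strong sense: for every $i$ and every $q$, the element $x_i$ is a nonzerodivisor modulo $\bigl(\frq{(I, x_1, \ldots, x_{i-1})}\bigr)^*$.

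Now fix $0 \leq k \leq m$ and set $I' = (I, x_1, \ldots, x_k)$. Since $x_1, \ldots, x_m$ is a system of parameters modulo $I$, the tail $x_{k+1}, \ldots, x_m$ is automatically a system of parameters modulo $I'$, so $\hght I' + \dim R/I' = \dim R$ and we are in the setup of Theorem~\ref{capturing equivalence} for the pair $(I', (x_{k+1}, \ldots, x_m))$. Moreover, the tight-closure nonzerodivisor condition needed for this pair is precisely the condition obtained in the previous step, restricted to the indices $i = k+1, \ldots, m$. Applying the ``if'' direction of Theorem~\ref{capturing equivalence} to $I'$ thus yields
\[
\ehk(I'+(x_{k+1},\ldots,x_m)) = \smashoperator{\sum_{Q \in \Minh(I')}} \ehk\bigl((x_{k+1},\ldots,x_m), R/Q\bigr)\,\ehk(I', R_Q).
\]
Since $I' + (x_{k+1}, \ldots, x_m) = I + J$, this is exactly the claimed identity.

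The only points to verify are that the global hypotheses of Theorem~\ref{capturing equivalence} (formally unmixed, locally stable test element) descend to the new setting. Formal unmixedness is inherited by the ambient ring $R$ (it is a hypothesis on $R$ and not on the quotient), and the same locally stable test element $c$ is used again — so no additional assumption is required. This makes the proof essentially a one-line application of the two directions of Theorem~\ref{capturing equivalence}, and there is no real obstacle beyond confirming that the height/parameter condition on $(I', (x_{k+1}, \ldots, x_m))$ holds, which is automatic.
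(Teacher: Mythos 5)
Your proof is correct and takes essentially the same route as the paper: apply Theorem~\ref{coloninc2} to obtain that each $x_i$ is a nonzerodivisor modulo $(\frq{(I, x_1, \ldots, x_{i-1})})^*$ for all $q$, then note that the condition for indices $i \geq k+1$ is exactly the colon-capturing hypothesis for the pair $\bigl((I, x_1, \ldots, x_k),\ (x_{k+1},\ldots,x_m)\bigr)$, and apply the converse direction of Theorem~\ref{capturing equivalence}.
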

\begin{proof}
First, by Theorem~\ref{coloninc2},
$x_{i}$ is not a zero divisor modulo $(\frq{(I, x_1, \ldots, x_{i-1})})^*$  for all $i$ and $q$.
Now, since this holds for all $i \geq k$, Theorem~\ref{capturing equivalence} shows the assertion.
\end{proof}

\begin{corollary}\label{shuffle}
Let $(R, \mf m)$ be a local ring and let $x_1, \ldots, x_d$ and $y_1, \ldots, y_d$ be two systems of parameters.
Then there exists a linear combination $x' = x_d + a_1x_1 + \ldots + a_{d-1}x_{d-1}$ with coefficients in $R$ 
such that $x_1, \ldots, x_{d-1}, x'$ and $y_1, \ldots, y_{d-1}, x'$ are systems of parameters.
\end{corollary}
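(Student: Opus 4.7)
The plan is to convert the problem into a prime avoidance for the coset $x_d + (x_1, \ldots, x_{d-1})$. For any choice of coefficients $a_1, \ldots, a_{d-1} \in R$, the tuple $x_1, \ldots, x_{d-1}, x'$ is automatically a system of parameters, since $x' - x_d \in (x_1, \ldots, x_{d-1})$ gives
\[
(x_1, \ldots, x_{d-1}, x') = (x_1, \ldots, x_{d-1}, x_d),
\]
which is $\mf m$-primary. So the only real constraint is on the tuple $y_1, \ldots, y_{d-1}, x'$.

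Setting $I = (x_1, \ldots, x_{d-1})$, I will reduce that constraint to avoiding finitely many primes. Since $y_1, \ldots, y_d$ is a system of parameters, $\dim R/(y_1, \ldots, y_{d-1}) = 1$, so every minimal prime $P_j$ of $(y_1, \ldots, y_{d-1})$ satisfies $\dim R/P_j = 1$; in particular $P_j \neq \mf m$. Thus $y_1, \ldots, y_{d-1}, x'$ is a system of parameters exactly when $x' \notin P_j$ for each $j$. Moreover, since $(x_1, \ldots, x_d)$ is $\mf m$-primary while each $P_j \neq \mf m$, we get $(x_1, \ldots, x_d) \not\subseteq P_j$, which is to say the coset $x_d + I$ is not contained in any $P_j$.

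At this point the problem becomes: find an element of the coset $x_d + I$ that avoids $\bigcup_j P_j$. The main step is a mild extension of prime avoidance: given a coset $f + J$ and primes $P_1, \ldots, P_r$ with $f + J \not\subseteq P_j$ for all $j$, some element of $f + J$ lies outside $\bigcup_j P_j$. After discarding any $P_j$ contained in another (which can only weaken what we must avoid), one argues by induction on $r$. If $g_0 \in f + J$ avoids $P_1, \ldots, P_{r-1}$ by induction and $g_0 \in P_r$, pick $g' \in f + J \setminus P_r$ and $a \in \bigcap_{j < r} P_j \setminus P_r$, the latter available by ordinary prime avoidance once the $P_j$ are pairwise incomparable. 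Then $g := g_0 + a(g' - g_0)$ still lies in $f + J$ (since $g'-g_0 \in J$), reduces to $g_0 \pmod{P_j}$ for $j < r$, and reduces to $ag' \pmod{P_r}$, which is nonzero because $P_r$ is prime.

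Applying this with $f = x_d$ and $J = I$ produces the required $x' = x_d + a_1 x_1 + \cdots + a_{d-1} x_{d-1}$. The main obstacle is the coset-variant of prime avoidance, but the induction above is essentially the standard proof of prime avoidance adapted from ideals to cosets; everything else is bookkeeping about the dimensions of the minimal primes of $(y_1, \ldots, y_{d-1})$.
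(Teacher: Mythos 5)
Your proposal is correct and follows essentially the same route as the paper: observe that $x_1,\ldots,x_{d-1},x'$ is automatically a system of parameters because $(x_1,\ldots,x_{d-1},x')=(x_1,\ldots,x_d)$, reduce the other constraint to the minimal primes of $(y_1,\ldots,y_{d-1})$ (which are non-maximal since $\dim R/(y_1,\ldots,y_{d-1})=1$), and finish with prime avoidance for the coset $x_d+(x_1,\ldots,x_{d-1})$. The only difference is that the paper simply invokes the coset avoidance lemma as a known fact, whereas you supply a proof of it (a small terminological quibble: the existence of $a\in\bigcap_{j<r}P_j\setminus P_r$ comes from the fact that a prime containing a finite intersection of primes must contain one of them, not quite ``ordinary prime avoidance,'' but the reasoning is sound).
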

\begin{proof}
First, it is easy to see that $x_1, \ldots, x_{d-1}, x'$  
is still a system of parameters for any choice of the coefficients $a_i$.

Second, we let $P_1, \ldots, P_n$ be the minimal primes of $(y_1, \ldots, y_{d-1})$
and apply the avoidance lemma for cosets to $x= x_d$ and $I = (x_1, \ldots, x_{d-1})$.
\end{proof}

After all the hard work, we can establish that our definition of an equimultiple ideal is independent
on the choice of a parameter ideal.

\begin{proposition}\label{colon capture}
Let $(R, \mf m)$ be a formally unmixed local ring of characteristic $p > 0$
with a locally stable test element $c$ and let $I$ be an ideal.  
If for some parameter ideal $J = (x_1, \ldots, x_m)$ modulo $I$
\[
\ehk (I + J) = \sum_{P \in \Minh(I)} \ehk (J, R/P) \ehk (I, R_P),
\]
then same is true for all systems of parameters.
\end{proposition}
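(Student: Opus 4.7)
The plan is induction on $m = \dim R/I$, using Theorem~\ref{capturing equivalence}, Corollary~\ref{part of J}, Corollary~\ref{shuffle}, and the elementary observation that both sides of
\[
\ehk(I+J) = \sum_{P \in \Minh(I)} \ehk(J, R/P)\,\ehk(I,R_P)
\]
depend only on the ideal $J$ and not on its generating set. Consequently, once the equation holds for $(x_1, \ldots, x_m)$, Theorem~\ref{capturing equivalence} supplies the NZD chain for every reordering of the generators of $J$; this permutation invariance will be used freely.

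For the base case $m = 1$, Theorem~\ref{capturing equivalence} translates the hypothesis to: $x_1 \notin P$ for every $P \in \Ass((\frq{I})^*)$ and every $q$. Since $R$ is formally unmixed, $\hght P + \dim R/P = \dim R$ for every prime $P$, so $\dim R/I = 1$ forces $V(I) = \Minh(I) \cup \{\mf m\}$. From $(\frq{I})^* \supseteq \frq{I}$ with $\sqrt{\frq{I}} = \sqrt{I}$, we get $\Ass((\frq{I})^*) \subseteq V(I)$; since $x_1 \in \mf m$, the NZD hypothesis excludes $\mf m$ as an associated prime, leaving $\Ass((\frq{I})^*) \subseteq \Minh(I)$. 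Any other parameter $y_1$ modulo $I$ automatically avoids every prime in $\Minh(I)$, so it is NZD modulo $(\frq{I})^*$ for all $q$, and Theorem~\ref{capturing equivalence} returns the equation for $(y_1)$.

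For the induction step, let $J' = (y_1, \ldots, y_m)$ be any SOP modulo $I$ and apply Corollary~\ref{shuffle} inside $R/I$ to produce $x' \in R$ such that both $(x_1, \ldots, x_{m-1}, x')$ and $(y_1, \ldots, y_{m-1}, x')$ are SOPs modulo $I$. I interpolate from $J$ to $J'$ in three moves. Steps (A) $(x_1, \ldots, x_m) \mapsto (x_1, \ldots, x_{m-1}, x')$ and (C) $(y_1, \ldots, y_{m-1}, x') \mapsto (y_1, \ldots, y_m)$ each change only the last generator: Corollary~\ref{part of J} with $k = m-1$ yields a one-parameter equation modulo $(I, x_1, \ldots, x_{m-1})$, resp.\ $(I, y_1, \ldots, y_{m-1})$, an ideal of dimension $\dim R/\cdot = 1$; the base case transfers the equation to the new last generator, and splicing with the first $m-1$ NZD conditions of the hypothesis rebuilds the full NZD chain through Theorem~\ref{capturing equivalence}. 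Step (B) $(x_1, \ldots, x_{m-1}, x') \mapsto (y_1, \ldots, y_{m-1}, x')$ keeps $x'$ fixed: permute $x'$ to the front and apply Corollary~\ref{part of J} with $k = 1$ to peel it off, obtaining the equation for $(x_1, \ldots, x_{m-1})$ modulo $(I, x')$, where $\dim R/(I, x') = m-1$; the induction hypothesis transfers this to $(y_1, \ldots, y_{m-1})$ modulo $(I, x')$, and combining the resulting NZD conditions with ``$x'$ is NZD modulo $(\frq{I})^*$'' (available from (A) via permutation) yields the full NZD chain for $(x', y_1, \ldots, y_{m-1})$ modulo $I$; permutation invariance then delivers the equation for $(y_1, \ldots, y_{m-1}, x')$.

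The main obstacle is the base case $m = 1$, which rests on the observation that when $\dim R/I = 1$ there is simply no room for embedded associated primes of $(\frq{I})^*$ other than $\mf m$, which is then excluded by the NZD hypothesis on a single parameter. The inductive step is essentially a bookkeeping exercise, splicing NZD chains through the peeling property of Corollary~\ref{part of J}, the common-last-element construction of Corollary~\ref{shuffle}, and the invariance of the equation under reorderings of generators.
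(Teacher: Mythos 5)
Your proof is correct and follows essentially the same strategy as the paper's: induction on $m$, using Corollary~\ref{shuffle} to interpolate through a common element $x'$, Corollary~\ref{part of J} to peel $x'$ off and drop to the ideal $(I,x')$, the induction hypothesis there, and the dimension-one argument (no embedded primes in $\Ass(R/(\frq{I})^*)$, equivalently Cohen--Macaulayness of $R/(\frq{I})^*$) to swap the final parameter. The paper packages your moves (B) and (C) into a single pass, but the underlying splicing of NZD chains via Theorem~\ref{capturing equivalence} and permutation-invariance of the equation is identical.
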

\begin{proof}
We use induction on $m$ and Theorem~\ref{capturing equivalence}.
 If $\dim R/I = 1$, then, by Theorem~\ref{coloninc2}, our assumption shows that 
$R/(\frq{I})^*$ is Cohen-Macaulay for any $q$, so any parameter is regular.

Let $(y_1, \ldots, y_m)$ be an arbitrary system of parameters modulo $I$.
Then using Corollary~\ref{shuffle}, 
we can find an element of the form $x' = x_{d} + a_1x_1 + \ldots + a_{m-1}x_{m-1}$
such that $y_1, \ldots, y_{m-1}, x'$ is still a system of parameters modulo $I$. 
Note that $(x_1, \ldots, x_{m-1}, x') = (x_1, \ldots, x_m) = J$, so the original formula still holds.
By Corollary~\ref{part of J}, we get that 
\[
\ehk (I + J) = \ehk (I, x', x_1, \ldots, x_{m-1}) = 
\smashoperator[lr]{\sum_{Q \in \Minh ((I, x'))}} \ehk ((x_1, \ldots, x_{m-1}), R/Q) \ehk ((I,x'), R_Q),
\]
so, by the induction hypothesis applied to $(I, x')$,
\[
\ehk (I, x', y_1, \ldots, y_{m-1}) = 
\smashoperator[r]{\sum_{Q \in \Minh ((I, x'))}} \ehk ((y_1, \ldots, y_{m-1}), R/Q) \ehk ((I, x'), R_Q).
\]
Using Theorem~\ref{coloninc2} on $(I, x')$, we get that 
$y_i$ is regular modulo $(\frq{(I, x', y_1, \ldots, y_{i-1})})^*$ for any $i$ and $q$. 
But since $x'$ is also regular modulo $(\frq{I})^*$ for all $q$, Theorem~\ref{capturing equivalence}
implies that 
\[
\ehk (I, x', y_1, \ldots, y_{m-1}) = \sum_{P \in \Minh(I)} \ehk ((x', y_1, \ldots, y_{m-1}), R/P) \ehk (I, R_P).
\]
After permuting the sequence and using Theorem~\ref{coloninc2}, we see that
$x'$ is not a zero divisor modulo $(\frq{(I, y_2, \ldots, y_n)})^*$ for all $q$.
Now, again, both $x'$ and $y_m$ are parameters modulo $(\frq{(I, y_1, \ldots, y_{m-1})})^*$, so $y_m$ is regular too.
\end{proof}

Motivated by Proposition~\ref{colon capture} and Theorem~\ref{capturing equivalence}, we 
introduce the following definition.

\begin{definition}\label{capturing def}
Let $(R, \mf m)$ be a local ring and let $I$ be an ideal.
We say that $I$ satisfies {\it colon capturing}, 
if for every system of parameters $x_1, \ldots, x_m$ in $R/I$, for every $0 \leq i < m$, and every $q$
\[
(\frq{(I, x_1, \ldots, x_i)})^* : x_{i + 1} \subseteq (\frq{(I, x_1, \ldots, x_i)})^*.
\]
\end{definition}

A well-known result of tight closure theory asserts that under mild conditions, $(0)$ satisfies colon capturing.
We note that the tight closure is taken in $R$, so this property is different from colon capturing in $R/I$. 

With this definition, we can summarize our findings in an analogue of the equivalence (b) and (d) 
of Theorem~\ref{HS equimultiple}.

\begin{theorem}\label{HK equimultiple}
Let $(R, \mf m)$ be a formally unmixed local ring of characteristic $p > 0$
with a locally stable test element $c$ and let $I$ be an ideal.  
Then the following are equivalent:
\begin{enumerate}
\item $I$ satisfies colon capturing,
\item for some (equivalently, every) ideal $J$ generated by a system of parameters modulo $I$,
\[
\ehk (I + J) = \sum_{P \in \Minh(I)} \ehk (J, R/P) \ehk (I, R_P).
\]
\end{enumerate}
\end{theorem}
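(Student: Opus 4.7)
The theorem is essentially a packaging of three results established earlier in the section—Theorem~\ref{coloninc2}, Theorem~\ref{capturing equivalence}, and Proposition~\ref{colon capture}—so the plan is to read off each implication directly without new arguments.

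For the direction (2) $\Rightarrow$ (1), I would start from the assumption that the formula
\[
\ehk(I + J) = \sum_{P \in \Minh(I)} \ehk(J, R/P)\,\ehk(I, R_P)
\]
holds for some system of parameters $J$ modulo $I$. The first move is to apply Proposition~\ref{colon capture} to upgrade this to \emph{every} system of parameters modulo $I$. Now fix an arbitrary such system $(x_1, \ldots, x_m)$. The final sentence of Theorem~\ref{coloninc2}, applied to this system, says that $x_i$ is not a zero divisor modulo $(\frq{(I, x_1, \ldots, x_{i-1})})^*$ for every $i$ and every $q$. After the harmless reindexing $i \mapsto i + 1$, this is precisely the inclusion $(\frq{(I, x_1, \ldots, x_i)})^* : x_{i+1} \subseteq (\frq{(I, x_1, \ldots, x_i)})^*$ demanded by Definition~\ref{capturing def}, so $I$ satisfies colon capturing.

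For the direction (1) $\Rightarrow$ (2), fix any system of parameters $(x_1, \ldots, x_m)$ modulo $I$. The colon capturing hypothesis says exactly that $x_{i+1}$ is a nonzero divisor modulo $(\frq{(I, x_1, \ldots, x_i)})^*$ for every $0 \leq i < m$ and every $q$; reindexing, this is the hypothesis of Theorem~\ref{capturing equivalence}. That theorem (which requires only the formal unmixedness and locally stable test element assumptions already in force) then produces the desired equality $\ehk(I + J) = \sum_{P \in \Minh(I)} \ehk(J, R/P)\,\ehk(I, R_P)$, establishing the ``some'' form of (2). The ``equivalently, every'' clause in (2) is already contained in Proposition~\ref{colon capture}, so nothing further needs to be checked.

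There is no genuine obstacle: the substantive analytic content—uniform convergence, the Hochster--Huneke length criterion for membership in tight closure, and the descent/induction arguments that permute parameters—was absorbed into Theorems~\ref{coloninc2} and \ref{capturing equivalence} and Proposition~\ref{colon capture}. The only point to be careful about is the cosmetic indexing discrepancy between Theorem~\ref{coloninc2}, which is phrased in terms of $x_i$ modulo $(\frq{(I, x_1, \ldots, x_{i-1})})^*$, and Definition~\ref{capturing def}, which is phrased in terms of $x_{i+1}$ modulo $(\frq{(I, x_1, \ldots, x_i)})^*$; once the indices are aligned, the two conditions are literally the same.
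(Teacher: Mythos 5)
Your proposal is correct and follows the same route as the paper, which proves the theorem by the one-line citation of Theorem~\ref{capturing equivalence} and Proposition~\ref{colon capture}. The only cosmetic difference is that you additionally invoke Theorem~\ref{coloninc2} for the direction (2)~$\Rightarrow$~(1), but that is just the ``only if'' half of Theorem~\ref{capturing equivalence}, so it amounts to the same argument; your careful check of the index shift between Theorem~\ref{coloninc2} and Definition~\ref{capturing def} is accurate.
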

\begin{proof}
This was proved in Theorem~\ref{capturing equivalence} and Proposition~\ref{colon capture}.
\end{proof}

In the special case of prime ideals with regular quotients we obtain the following characterization.

\begin{corollary}\label{general HK equimultiplicity}
Let $(R, \mf m)$ be an formally unmixed local ring of characteristic $p > 0$ with a locally stable test element $c$ and 
let $\mf p$ be a prime ideal such that $R/\mf p$ is a regular local ring.
Then the following are equivalent:
\begin{itemize}
\item[(a)] $\ehk(\mf m) = \ehk (\mf p)$,
\item[(b)] 
For any (equivalently, some) parameter ideal $J$ modulo $\mf p$ 
\[
\ehk (\mf p, J) = \length (R/(\mf p, J)) \ehk (\mf m),
\]
\item[(c)]
For any (equivalently, some) parameter ideal $J$ modulo $\mf p$ 
\[
\ehk (\mf p, J) = \length (R/(\mf p, J)) \ehk (\mf p),
\]
\item[(d)]
$\mf p$ satisfies colon capturing.
\end{itemize}
\end{corollary}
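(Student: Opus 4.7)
The plan is to apply Theorem~\ref{HK equimultiple} to $I = \mf p$ and to exploit the regularity of $R/\mf p$ to identify its right-hand side with condition (c), and then to connect (a), (b), and (c) through a two-sided estimate on $\ehk(\mf p + J)$.

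Since $R$ is formally unmixed and $R/\mf p$ is a regular local domain, we have $\Minh(\mf p) = \{\mf p\}$ and $\hght \mf p + \dim R/\mf p = \dim R$, so the hypotheses of Theorem~\ref{HK equimultiple} apply to $\mf p$. For any system of parameters $J = (x_1, \ldots, x_m)$ modulo $\mf p$, the images of the $x_i$ form a regular sequence in $R/\mf p$, so Proposition~\ref{hs param} together with Corollary~\ref{ehk parameter} gives $\ehk(J, R/\mf p) = \length(R/(\mf p, J))$. Thus the equimultiplicity formula in Theorem~\ref{HK equimultiple}(2) specializes to
\[
\ehk(\mf p + J) \;=\; \length(R/(\mf p, J)) \, \ehk(\mf p),
\]
which is precisely condition (c). This yields (c) $\Leftrightarrow$ (d), including the ``some is equivalent to every'' statement in (c).

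To link (a) with (b) and (c), the key estimate is the sandwich
\[
\length(R/(\mf p, J)) \, \ehk(\mf p) \;\leq\; \ehk(\mf p + J) \;\leq\; \length(R/(\mf p, J)) \, \ehk(\mf m),
\]
whose lower bound is Lemma~\ref{HK inequality} (using $\Minh(\mf p) = \{\mf p\}$) and whose upper bound is Lemma~\ref{HK inclusions}. If (a) holds the two bounds coincide, so both (b) and (c) hold for every $J$. For the converse direction (c) $\Rightarrow$ (a), I would note that (c for some) implies (d) by the previous paragraph and hence (c for every); then applying (c) to the distinguished sop $J_0$ obtained by lifting a minimal basis of $\mf m/\mf p$ gives $\mf p + J_0 = \mf m$ and $\length(R/\mf m) = 1$, so (c) directly forces $\ehk(\mf m) = \ehk(\mf p)$. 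The implication (b for every) $\Rightarrow$ (a) is already the content of Corollary~\ref{gcriterion}.

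The hardest point, which I expect to be the main obstacle, is closing the loop for (b for some) $\Rightarrow$ (a). The sandwich alone produces only the trivial $\ehk(\mf p) \leq \ehk(\mf m)$ already available from Proposition~\ref{HK localization}. To extract genuine equality, I plan to pass to the family $J_n = (x_1^{n_1}, \ldots, x_m^{n_m})$, apply Corollary~\ref{hkmonotone} variable-by-variable to see that $\ehk(\mf p + J_n)/(n_1\cdots n_m)$ is monotone decreasing, and use Proposition~\ref{general limit} to identify its limit as $\length(R/(\mf p, J)) \, \ehk(\mf p)$. The hypothesis (b for $J$) says that this monotone sequence begins at the value $\length(R/(\mf p, J)) \, \ehk(\mf m)$; combining this starting value with the monotone decrease and the limit should force the strict inequality $\ehk(\mf m) > \ehk(\mf p)$ to contradict either the upper bounds from Lemma~\ref{HK inclusions} applied to each $J_n$ or the colon-capturing consequence of the already-established (c) $\Leftrightarrow$ (d), yielding (a).
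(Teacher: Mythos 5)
Your proposal reconstructs the correct architecture and, in fact, fills in considerably more detail than the paper's own one-line proof, which simply invokes Corollary~\ref{gcriterion} for the equivalence of (a) with (b) and Theorem~\ref{HK equimultiple} for the equivalence of (a), (c), (d). Your observation that $\Minh(\mf p) = \{\mf p\}$ and $\ehk(J, R/\mf p) = \length(R/(\mf p, J))$ collapses Theorem~\ref{HK equimultiple}(2) into condition (c) is exactly right, the sandwich
\[
\length(R/(\mf p, J))\, \ehk(\mf p) \le \ehk(\mf p + J) \le \length(R/(\mf p, J))\,\ehk(\mf m)
\]
gives (a) $\Rightarrow$ (b), (c) as you say, the route (c for some) $\Rightarrow$ (d) $\Rightarrow$ (c for every) $\Rightarrow$ (a) via the distinguished $J_0$ is correct, and (b for every) $\Rightarrow$ (a) is precisely Corollary~\ref{gcriterion}.

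However, the step you flag as the main obstacle --- (b for some) $\Rightarrow$ (a) --- is not merely hard: as stated it is false, and no argument can close it. Condition (b) with ``some'' is vacuous: take $J = J_0$ lifting a regular system of parameters of $R/\mf p$ (a legitimate system of parameters modulo $\mf p$); then $\mf p + J_0 = \mf m$, $\length(R/(\mf p, J_0)) = 1$, and the equation becomes $\ehk(\mf m) = \ehk(\mf m)$, which holds for every $\mf p$ and so cannot imply (a). This also explains why your monotone argument cannot succeed: the sequence $\ehk(\mf p + J_n)/(n_1\cdots n_m)$ is monotone decreasing from the starting value $\length(R/(\mf p,J))\ehk(\mf m)$ toward the limit $\length(R/(\mf p,J))\ehk(\mf p)$ and remains confined between these two numbers, which is entirely consistent with $\ehk(\mf m) > \ehk(\mf p)$ and contradicts neither Lemma~\ref{HK inclusions} nor the colon-capturing equivalence you already have. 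The ``(equivalently, some)'' parenthesis in (b) therefore appears to be a slip in the corollary's statement --- note that the paper's proof only cites Corollary~\ref{gcriterion}, whose condition (iii) is the ``for any'' version and which says nothing about ``some.'' Reading (b) as ``for every $J$'' only, the remainder of your proposal is a complete and correct proof.
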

\begin{proof}
The first two conditions are equivalent by Corollary~\ref{gcriterion},
(a), (c), (d) are equivalent by the previous theorem.
\end{proof}

This theorem has a notable corollary. 
First, recall that a ring $R$ of characteristic $p > 0$ is called weakly F-regular if $I^* = I$ for every ideal $I$ in $R$. 
For example, any regular ring is weakly F-regular and direct summands of weakly F-regular rings are weakly F-regular.

\begin{corollary}\label{weakly f-regular}
Let $(R, \mf m)$ be a weakly F-regular excellent local domain and let $\mf p$ be a prime ideal such that $R/\mf p$ is regular.
Then $\ehk (\mf m) = \ehk(\mf p)$ if and only if the Hilbert--Kunz functions of $R$ and $R_\mf p$ coincide. 
\end{corollary}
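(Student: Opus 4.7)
The plan is to glue together Corollary~\ref{general HK equimultiplicity} and Proposition~\ref{HK functions}, using weak F-regularity as the hinge that converts the tight-closure-based colon capturing condition into an ordinary regular-sequence statement. Before starting, I verify the hypotheses of both cited results. Since $R$ is an excellent local domain, it is universally catenary, so $\hght \mf p + \dim R/\mf p = \dim R$; moreover $\widehat{R}$ is reduced (analytic unramifiedness) and equidimensional, so $R$ is formally unmixed. Excellence together with the (weakly F-regular, hence reduced) hypothesis supplies a locally stable test element via Theorem~\ref{test exist}. Thus both Corollary~\ref{general HK equimultiplicity} and Proposition~\ref{HK functions} apply to $R$ and $\mf p$.

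The reverse direction is immediate: if $\hkf(\mf m) = \hkf(\mf p)$ for every $q$, then sending $q \to \infty$ gives $\ehk(\mf m) = \ehk(\mf p)$. For the forward direction, assume $\ehk(\mf m) = \ehk(\mf p)$. By Corollary~\ref{general HK equimultiplicity}, $\mf p$ satisfies colon capturing in the sense of Definition~\ref{capturing def}. Because $R$ is weakly F-regular, every ideal is tightly closed, so the inclusions
\[
(\frq{(\mf p, x_1, \ldots, x_i)})^* : x_{i+1} \subseteq (\frq{(\mf p, x_1, \ldots, x_i)})^*
\]
collapse to $\frq{(\mf p, x_1, \ldots, x_i)} : x_{i+1} = \frq{(\mf p, x_1, \ldots, x_i)}$, i.e.\ $x_{i+1}$ is a nonzerodivisor on $R/\frq{(\mf p, x_1, \ldots, x_i)}$. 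Since $\sqrt{\frq{\mf p}} = \mf p$, any system of parameters $x_1, \ldots, x_m$ modulo $\mf p$ remains a system of parameters modulo $\frq{\mf p}$; the above shows it is in fact a regular sequence on $R/\frq{\mf p}$, forcing $R/\frq{\mf p}$ to be Cohen-Macaulay. As this holds for every $q$, Proposition~\ref{HK functions} delivers $\hkf(\mf m) = \hkf(\mf p)$ for all $q$.

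The main obstacle is essentially bookkeeping: making sure the various hypotheses attached to the two black-box results (formally unmixed, locally stable test element, height formula) are all automatically supplied by the excellent-local-domain assumption, and that the cancellation of tight closure via weak F-regularity is invoked for the correct family of ideals. No fundamentally new computation is needed, since the heavy lifting, namely the uniform convergence statement Corollary~\ref{ehk limits}, the colon capturing characterization of equimultiplicity in Theorem~\ref{HK equimultiple}, and the Cohen-Macaulay criterion for fixed-$q$ equality of Hilbert-Kunz functions in Proposition~\ref{HK functions}, has already been established.
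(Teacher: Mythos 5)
Your proof is correct and follows the same route as the paper: invoke Corollary~\ref{general HK equimultiplicity} to obtain colon capturing, use weak F-regularity to discard the tight-closure operators, deduce $R/\frq{\mf p}$ is Cohen-Macaulay for all $q$, and finish with Proposition~\ref{HK functions}. The only place to be slightly more careful is the phrase claiming $x_1,\ldots,x_m$ is directly seen to be a regular sequence on $R/\frq{\mf p}$: colon capturing hands you that $x_{i+1}$ is a nonzerodivisor modulo $(\frq{\mf p},x_1^q,\ldots,x_i^q)$, so what you actually obtain first is that $x_1^q,\ldots,x_m^q$ is a regular sequence of parameters, which still gives Cohen-Macaulayness (and then, a posteriori, that $x_1,\ldots,x_m$ is itself regular).
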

\begin{proof}
Since all ideals in $R$ are tightly closed, from the preceding theorem 
we obtain that $R/\frq{\mf p}$ is Cohen-Macaulay for all $q$.
Hence the assertion follows from Proposition~\ref{HK functions}.
\end{proof}

\begin{remark}
In \cite[Lemma~4.2, Remark~4.7]{Ma}, Linquan Ma observed that perfect ideals are equimultiple and, thus, 
satisfy the colon capturing property. This observation is a crucial ingredient of his recent work on Lech's conjecture (\cite{Ma}). 
\end{remark}
	
\subsection{Further improvements}\label{Further}

We will develop some general reductions for the equimultiplicity condition
and use them to generalize the obtained results.

First, we show that equimultiplicity can be checked modulo minimal primes.

\begin{lemma}\label{prime mod p}
Let $(R, \mf m)$ be a local ring of characteristic $p > 0$ and $\mf p$ be a prime ideal such that 
$\hght \mf p + \dim R/\mf p = \dim R$.
Then $\ehk(\mf m) = \ehk(\mf p)$ if and only if 
$\Minh (R) = \Minh(R_\mf p)$ and $\ehk(\mf m R/P) = \ehk (\mf p R/P)$ for any $P \in \Minh (R)$.

In particular, if $R$ is catenary, 
then $\ehk(\mf m) = \ehk(\mf p)$ if and only if 
$P \subseteq \mf p$ and $\ehk(\mf m R/P) = \ehk (\mf p R/P)$ for all $P \in \Minh (R)$.
\end{lemma}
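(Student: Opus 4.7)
The plan is to expand both sides of the purported equality via the associativity formula (Lemma~\ref{ass formula}) and compare terms. Identifying minimal primes of $R_\mf p$ with minimal primes $P$ of $R$ contained in $\mf p$ through $Q = PR_\mf p$, and using the isomorphism $(R_\mf p)_{PR_\mf p} \cong R_P$ together with the equality $\ehk(\mf p R_\mf p / PR_\mf p) = \ehk(\mf p R/P)$, I would rewrite
\[
\ehk(\mf m) = \sum_{P \in \Minh(R)} \ehk(\mf m R/P)\, \length_{R_P}(R_P), \qquad
\ehk(\mf p) = \sum_{P \in \Minh(R_\mf p)} \ehk(\mf p R/P)\, \length_{R_P}(R_P),
\]
where $\Minh(R_\mf p)$ is viewed as the set of minimal primes $P \subseteq \mf p$ of $R$ satisfying $\hght(\mf p R/P) = \hght \mf p$. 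The hypothesis $\hght \mf p + \dim R/\mf p = \dim R$ automatically yields $\Minh(R_\mf p) \subseteq \Minh(R)$: the universal inequality $\hght(\mf p R/P) + \dim R/\mf p \leq \dim R/P \leq \dim R$, applied to $P \in \Minh(R_\mf p)$, forces $\dim R/P = \dim R$.

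The main step is to prove $\ehk(\mf p R/P) \leq \ehk(\mf m R/P)$ whenever $P \in \Minh(R) \cap \Minh(R_\mf p)$. In the local domain $S = R/P$ the dimension hypothesis persists as $\hght(\mf p S) + \dim S/\mf p S = \dim S$, and I would apply Corollary~\ref{asymptotic descent} in $S$ with the ideal $\mf m S$ to obtain $\ehk(\mf p S + \frq{\mf m S})/q^{\dim R/\mf p} \to \ehk(R/\mf p)\cdot \ehk(\mf p R/P)$. On the other hand, Lemma~\ref{HK inclusions} gives $\ehk(\mf p S + \frq{\mf m S}) \leq \length(S/(\mf p S + \frq{\mf m S}))\cdot \ehk(\mf m R/P)$, and the definition of Hilbert-Kunz multiplicity in $R/\mf p$ yields $\length(S/(\mf p S + \frq{\mf m S}))/q^{\dim R/\mf p} \to \ehk(R/\mf p)$. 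Cancelling the strictly positive factor $\ehk(R/\mf p)$ (setting aside the trivial case $\mf p = \mf m$) gives the inequality. This is the crux of the argument: one might hope to invoke Proposition~\ref{HK localization} directly inside $R/P$, but that result requires catenariness, which is not assumed here, so the asymptotic argument via Corollary~\ref{asymptotic descent} is essential.

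Assembling these ingredients,
\[
\ehk(\mf m) - \ehk(\mf p) = \smashoperator[r]{\sum_{P \in \Minh(R) \cap \Minh(R_\mf p)}} \bigl(\ehk(\mf m R/P) - \ehk(\mf p R/P)\bigr) \length_{R_P}(R_P) + \smashoperator[r]{\sum_{P \in \Minh(R) \setminus \Minh(R_\mf p)}} \ehk(\mf m R/P)\, \length_{R_P}(R_P)
\]
is a sum of nonnegative terms, strictly positive on each summand of the second sum (since $\ehk(\mf m R/P) > 0$), so it vanishes if and only if every summand does; this is exactly the claimed equivalence. For the ``in particular'' statement, catenariness of $R$ promotes the inequality $\hght(\mf p R/P) + \dim R/\mf p \leq \dim R/P$ to equality whenever $P \subseteq \mf p$, so any $P \in \Minh(R)$ contained in $\mf p$ automatically satisfies $\hght(\mf p R/P) = \hght \mf p$ and hence lies in $\Minh(R_\mf p)$; combined with the already-proved $\Minh(R_\mf p) \subseteq \Minh(R)$, the condition $\Minh(R) = \Minh(R_\mf p)$ collapses to requiring $P \subseteq \mf p$ for every $P \in \Minh(R)$.
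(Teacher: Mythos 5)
Your proof is correct, and the overall strategy mirrors the paper's: both expand $\ehk(\mf m)$ and $\ehk(\mf p)$ via the associativity formula (Lemma~\ref{ass formula}) over $\Minh(R)$ and $\Minh(R_\mf p)$ respectively, observe $\Minh(R_\mf p) \subseteq \Minh(R)$ from the dimension hypothesis, and compare term by term. The genuine divergence is how the key intermediate inequality $\ehk(\mf p R/P) \leq \ehk(\mf m R/P)$ for $P \in \Minh(R_\mf p)$ is established. The paper invokes Proposition~\ref{HK localization} (Kunz's localization bound) in the domain $R/P$; but that proposition, as stated, requires the ring to be catenary, and nothing in the hypotheses of Lemma~\ref{prime mod p} guarantees $R/P$ is catenary --- catenariness of $R$ only enters in the ``in particular'' clause. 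You instead derive the inequality from Corollary~\ref{asymptotic descent} applied in $R/P$ (which needs only the local dimension equality $\hght(\mf p R/P) + \dim (R/P)/(\mf p R/P) = \dim R/P$, and you correctly extract this from the hypothesis via the universal inequality) combined with the elementary bound of Lemma~\ref{HK inclusions}. This keeps the argument strictly within the lemma's stated assumptions. The paper's citation, read literally, appeals to a result whose hypotheses are not obviously met here, though Kunz's proof can presumably be run under the pointwise dimension formula alone; your version has the merit of being self-contained and unambiguous on this point.
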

\begin{proof}
If $Q \in \Minh (R_\mf p)$, by definition, $\dim R_\mf p/QR_\mf p = \hght \mf p$,
so  $Q \in \Minh (R)$ by the assumption on $\mf p$. 
Moreover, if $R$ is catenary, it is easy to check that, in fact, 
$\Minh (R_\mf p) = \{P \in \Minh(R)\mid P \subseteq \mf p\}$.

By the additivity formula we have:
\[
\ehk(\mf m) = \sum_{P \in \Minh (R)} \ehk(\mf m, R/P) \length (R_P),
\]
and, by the localization property of Hilbert--Kunz multiplicity (\cite[Proposition~3.3]{Kunz2}),
\[
\ehk(\mf p) = \sum_{Q \in \Minh (R_\mf p)} \ehk(\mf p, R_\mf p/QR_\mf p) \length (R_Q)
\leq \sum_{Q \in \Minh (R_\mf p)} \ehk(\mf m, R/Q) \length (R_Q).
\]
Since the second sum is contained in the sum appearing in the expression for $\ehk(\mf m)$, the claim follows.
\end{proof}

The lemma can be easily generalized, but we decided to leave the special case for clarity.
A more general lemma can be found right after the following easy corollary.

\begin{corollary}\label{reduced equimultiplicity}
Let $(R, \mf m)$ be a local ring of characteristic $p > 0$ and $\mf p$ be a prime ideal such that 
$\hght \mf p + \dim R/\mf p = \dim R$.
Then $\ehk(\mf m) = \ehk(\mf p)$ if and only if $\ehk(\mf m\red{R}) = \ehk(\mf p\red{R})$.
\end{corollary}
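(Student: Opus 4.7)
The plan is to reduce the corollary directly to the preceding Lemma~\ref{prime mod p}. That lemma characterizes $\ehk(\mf m) = \ehk(\mf p)$ by two conditions: (i) $\Minh(R) = \Minh(R_\mf p)$, and (ii) $\ehk(\mf m R/P) = \ehk(\mf p R/P)$ for every $P \in \Minh(R)$. The goal is to verify that neither condition is affected by passing from $R$ to $R_{red}$.

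First I will check that Lemma~\ref{prime mod p} actually applies to $R_{red}$ with prime ideal $\mf p R_{red}$. The natural surjection $R \twoheadrightarrow R_{red}$ induces an order-preserving bijection between primes of $R$ and primes of $R_{red}$, so $\hght \mf p = \hght(\mf p R_{red})$; together with $\dim R = \dim R_{red}$ and $\dim R/\mf p = \dim R_{red}/\mf p R_{red}$, the hypothesis $\hght \mf p + \dim R/\mf p = \dim R$ transfers verbatim.

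Next I will exploit the same bijection $P \leftrightarrow P/\sqrt 0$ at the level of minimal primes. Because it preserves the dimension of the quotient, it restricts to $\Minh(R) \leftrightarrow \Minh(R_{red})$, and the parallel argument for $R_\mf p$ and $(R_\mf p)_{red} = (R_{red})_{\mf p R_{red}}$ gives $\Minh(R_\mf p) \leftrightarrow \Minh((R_{red})_{\mf p R_{red}})$. Moreover $R/P \cong R_{red}/(P/\sqrt 0)$ canonically, so the multiplicity equality appearing in (ii) for $R$ is literally the same as the one for $R_{red}$. Feeding these identifications back into Lemma~\ref{prime mod p} yields identical characterizations of $\ehk(\mf m) = \ehk(\mf p)$ and $\ehk(\mf m R_{red}) = \ehk(\mf p R_{red})$, proving the corollary. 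The only step needing genuine care is keeping track of the four minimal-prime sets involved ($\Minh$ of $R$, $R_{red}$, $R_\mf p$, and $(R_{red})_{\mf p R_{red}}$), but each identification is immediate from the fact that $\sqrt 0$ is contained in every prime of $R$.
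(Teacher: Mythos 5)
Your proof is correct and follows the same route as the paper: the paper's argument is the one-line observation that $\Minh(R) = \Minh(R_{red})$ and that Lemma~\ref{prime mod p} applied to both $R$ and $R_{red}$ yields identical conditions. You have merely spelled out the bookkeeping (transfer of the height hypothesis, the four minimal-prime identifications, and the canonical isomorphism $R/P \cong R_{red}/(P/\sqrt{0})$) that the paper leaves implicit.
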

\begin{proof}
Since $\Minh (R) = \Minh(\red{R})$, this immediately follows from the previous lemma applied to $R$ and $\red{R}$.
\end{proof}

\begin{lemma}\label{equi mod min}
Let $(R, \mf m)$ be a local ring of characteristic $p > 0$ and $I$ be an ideal such that 
$\hght I + \dim R/I = \dim R$. Let $J$ be a parameter ideal modulo $I$.
Then 
\[
\ehk (I + J) = \sum_{Q \in \Minh(I)} \ehk(J, R/Q) \ehk(IR_Q)
\]
if and only if the following two conditions hold:
\begin{itemize}
\item[(a)] $\Minh(I + P) \subseteq \Minh (I)$ for all $P \in \Minh(R)$,
\item[(b)] $\ehk (I + J, R/P) = \sum_{Q \in \Minh(IR/P)} \ehk(J, R/Q) \ehk(IR_Q/PR_Q)$ for all $P \in \Minh(R)$.
\end{itemize}
\end{lemma}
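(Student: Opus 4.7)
The plan is to apply the associativity formula (Lemma~\ref{ass formula}) to expand both sides of the equimultiplicity equation as weighted sums over the minimal primes of $R$, then match the two sides term by term. On the left, the formula gives directly
\[
\ehk(I+J) = \sum_{P \in \Minh(R)} \ehk(I+J, R/P)\, \length(R_P).
\]
On the right, for each $Q \in \Minh(I)$ I apply the associativity formula inside the local ring $R_Q$:
\[
\ehk(IR_Q) = \sum_{P \in \Minh(R_Q)} \ehk(IR_Q/PR_Q)\, \length((R_Q)_{PR_Q}),
\]
and interchange the orders of summation to rewrite the full right-hand side as $\sum_{P \in \Minh(R)} \length(R_P) \cdot (\cdots)_P$, where the inner sum ranges over those $Q \in \Minh(I)$ with $P \subseteq Q$ and $PR_Q \in \Minh(R_Q)$.

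The heart of the argument is to verify that, under condition (a), for each $P \in \Minh(R)$ this inner index set equals $\Minh(I(R/P))$ and the inner term matches the right-hand side of (b). One inclusion is automatic: if $PR_Q \in \Minh(R_Q)$, then the chain $P \subset Q$ realizes $\hght Q$, so $Q/P$ has the correct codimension in $R/P$ to lie in $\Minh(I(R/P))$. For the reverse inclusion, (a) ensures that every minimal prime $Q$ of $I+P$ with $\dim R/Q = \dim R/(I+P)$ already lies in $\Minh(I)$, which forces $\dim R/(I+P) = \dim R/I$ and thus gives the height identity $\hght(I(R/P)) + \dim(R/P)/I(R/P) = \dim(R/P)$ needed to apply Lemma~\ref{HK inequality} to the image of $I$ in $R/P$. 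This reconciliation of index sets is precisely where (a) is used.

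With the decomposition in hand, the equivalence becomes transparent. The backward direction follows by multiplying each instance of (b) by $\length(R_P)$ and summing over $P \in \Minh(R)$. For the forward direction, Lemma~\ref{HK inequality} applied inside $R/P$ yields the term-by-term inequality
\[
\ehk(I+J, R/P) \geq \sum_{Q \in \Minh(I(R/P))} \ehk(J, R/Q)\, \ehk(IR_Q/PR_Q),
\]
so the global equality forces termwise equality, which is exactly (b); moreover if (a) failed then some minimal prime of $I+P$ outside $\Minh(I)$ would drop out of the decomposition of the right-hand side while still contributing to the left, contradicting the global equality. The main obstacle is carrying out the minimal-prime bookkeeping without a catenary or equidimensional hypothesis on $R$: I must check carefully that condition (a) is precisely what is needed to realign the index sets $\Minh(R_Q)$ (for $Q \in \Minh(I)$) with $\Minh(I(R/P))$ (for $P \in \Minh(R)$), and to guarantee the height hypothesis required by Lemma~\ref{HK inequality} in each $R/P$.
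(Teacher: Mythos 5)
Your plan mirrors the paper's: expand both sides over $\Minh(R)$ via the associativity formula, transfer the inequality of Lemma~\ref{HK inequality} to each $R/P$, and match terms. However, two of your intermediate assertions are not justified in the generality of the lemma, which imposes no catenary hypothesis. First, showing that $PR_Q \in \Minh(R_Q)$ forces $Q/P \in \Minh(IR/P)$ is a statement about \emph{dimension}, not codimension: the correct argument is $\dim R/I \geq \dim R/(I+P) \geq \dim R/Q = \dim R/I$, which gives $Q \in \Minh(I+P)$; a chain realizing $\hght Q$ does not by itself control whether $\dim(R/P)/(Q/P)$ is maximal among the minimal primes of $IR/P$.

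Second, and more consequentially, $\dim R/(I+P) = \dim R/I$ does \emph{not} imply the height identity $\hght(IR/P) + \dim(R/P)/(IR/P) = \dim(R/P)$. Without catenariness, a minimal prime $Q'/P$ of $IR/P$ may have $\hght_{R/P}(Q'/P)$ strictly smaller than $\hght I$, so Lemma~\ref{HK inequality} cannot be cited verbatim in $R/P$; for the same reason, the index sets $\{Q \in \Minh(I) : P \in \Minh(R_Q)\}$ and $\Minh(IR/P)$ need not coincide. The repair, which the paper's proof uses silently when it passes from the sum over $\{Q \in \Minh(I): P \in \Minh(R_Q)\}$ to the sum over $\Minh(IR/P)$, is that for any $Q'$ in the discrepancy one has $\dim R_{Q'}/PR_{Q'} < \hght Q'$, and hence $\ehk(IR_{Q'}/PR_{Q'}) = 0$. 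The offending terms therefore drop from every sum, so it is the \emph{sums}, not the index sets, that agree, and the conclusion of Lemma~\ref{HK inequality} persists in $R/P$. With this vanishing observation supplied, your decomposition is correct and coincides with the paper's.
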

\begin{proof}
First, observe that if $P \in \Minh (R)$ and $Q \in \Minh(I)$ such that $P \subseteq Q$, then 
$\dim R/I \geq \dim R/(I + P) \geq \dim R/Q = \dim R/I$, so $Q \in \Minh(I + P)$ and 
the image of $Q$ in $R/P$ is in $\Minh (IR/P)$. Moreover, in this case, $\dim R/(I + P) = \dim R/I$, so 
$\Minh(I + P) \subseteq \Minh (I)$. 
And the converse is also true: if $\Minh(I + P) \subseteq \Minh (I)$ then $P$ is contained in some $Q \in \Minh(I)$.

By the additivity formula for $\ehk(IR_Q)$
\[
\smashoperator[r]{\sum_{Q \in \Minh(I)}} \ehk(J, R/Q) \ehk(IR_Q) =  
\sum_{Q \in \Minh(I)} \ehk(J, R/Q) \smashoperator[r]{\sum_{P \in \Minh(R_Q)}} \ehk(IR_Q/PR_Q) \length (R_P).
\]
If $P \in \Minh (R_Q)$, by definition, $\dim R_Q/PR_Q = \hght Q$. 
So, since $Q \in \Minh (I)$ and $\dim R/I + \hght I = \dim R$, $P \in \Minh (R)$.

Let $\Lambda = \cup \Minh (R_Q) \subseteq \Minh(R)$ where the union is taken over all $Q \in \Minh(I)$. 
In the formula above, we change the order of summations to obtain
\begin{align*}
\smashoperator[r]{\sum_{Q \in \Minh(I)}} \ehk(J, R/Q) \ehk(IR_Q)
= \sum_{P \in \Lambda} \length (R_P) \sum_{\substack{Q \in \Minh(I)\\ P \in \Minh (R_Q)}} \ehk(J, R/Q) \ehk(I R_Q/PR_Q).
\end{align*}
By the observation in the beginning of the proof, 
\[
\sum_{\substack{Q \in \Minh(I)\\ P \in \Minh (R_Q)}} \ehk(J, R/Q) \ehk(I R_Q/PR_Q) 
= \sum_{Q' \in \Minh (IR/P)} \ehk(J, R/Q') \ehk(I R_Q'/PR_Q').
\]
If the first sum is not empty (i.e., when $P \subseteq Q$ for some $Q \in \Minh(I)$), 
then $J$ is still a system of parameters modulo $I + P$ 
because it is a system of parameters modulo $Q$. 
Thus, in this case, by Lemma~\ref{HK inequality},
\[
\sum_{\substack{Q \in \Minh(I)\\ P \in \Minh (R_Q)}} \ehk(J, R/Q) \ehk(I R_Q/PR_Q) \leq \ehk (I + J, R/P).
\] 
But now, we can use the additivity formula for $I + J$, so 
\begin{align*}
\sum_{Q \in \Minh(I)} \ehk(J, R/Q) \ehk(IR_Q) &= 
\sum_{P \in \Lambda} \length (R_P) \sum_{\substack{Q \in \Minh(I)\\ P \in \Minh (R_Q)}} \ehk(J, R/Q) \ehk(I R_Q/PR_Q)
\\&\leq \sum_{P \in \Minh (R)} \length (R_P)\ehk (I + J, R/P) = \ehk(I + J),
\end{align*}
which finishes the proof.
\end{proof}

\begin{corollary}\label{equi reduced}
Let $(R, \mf m)$ be a local ring of characteristic $p > 0$ and $I$ be an ideal such that 
$\hght I + \dim R/I = \dim R$. Let $J$ be generated by a system of parameters modulo $I$.
Then 
\[
\ehk (I + J) = \sum_{Q \in \Minh(I)} \ehk(J, R/Q) \ehk(IR_Q)
\]
if and only if 
\[
\ehk (I + J, \red{R}) = \sum_{Q \in \Minh(I)} \ehk(J, R/Q) \ehk(I(\red{R})_Q).
\]
\end{corollary}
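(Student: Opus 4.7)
The plan is to deduce this corollary from Lemma~\ref{equi mod min} applied separately to $R$ and to $R_{red}$. The key observation is that the characterization provided by that lemma decomposes equimultiplicity into intrinsic conditions at each minimal prime, and these conditions are insensitive to nilpotents because the quotients $R/P$ coincide with the quotients $R_{red}/(PR_{red})$ for $P \in \Minh(R)$.

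First I would verify that the hypotheses of Lemma~\ref{equi mod min} carry over to $R_{red}$. Since the primes of $R$ are in canonical bijection with the primes of $R_{red}$, with heights and quotient dimensions preserved, we have $\hght(IR_{red}) + \dim R_{red}/IR_{red} = \hght I + \dim R/I = \dim R = \dim R_{red}$, and $JR_{red}$ remains a system of parameters modulo $IR_{red}$. Moreover, under this bijection $\Minh(R) \leftrightarrow \Minh(R_{red})$ and $\Minh(I) \leftrightarrow \Minh(IR_{red})$, and for corresponding primes the quotient rings $R/Q$ and $R_{red}/QR_{red}$ agree.

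Next I would apply Lemma~\ref{equi mod min} to both rings. In $R$, the first displayed equality is equivalent to (a) $\Minh(I+P) \subseteq \Minh(I)$ for every $P \in \Minh(R)$ together with (b) the equimultiplicity equation in $R/P$ for each such $P$. Over $R_{red}$, the second displayed equality translates in the same way into analogous conditions (a$'$) and (b$'$) indexed by $\Minh(R_{red})$ with $I$ replaced by $IR_{red}$. Condition (a) is a statement about primes containing $I+P$, which under the bijection becomes exactly (a$'$); condition (b) is an intrinsic equality of Hilbert--Kunz multiplicities over $R/P \cong R_{red}/(PR_{red})$, which is literally (b$'$). Thus the two displayed equalities are equivalent.

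There is no serious obstacle here: the argument is essentially a bookkeeping exercise pushing Lemma~\ref{equi mod min} through the prime correspondence $\Minh(R) \leftrightarrow \Minh(R_{red})$. The only subtlety worth being careful about is ensuring that every term appearing on the right-hand sides of the two displayed equalities (in particular $\ehk(J, R/Q)$, which is the same as $\ehk(JR_{red}, R_{red}/QR_{red})$, and $\ehk(IR_Q)$ versus $\ehk(I(R_{red})_Q)$) is being decomposed via the associativity formula in the same way on both sides, which is precisely what Lemma~\ref{equi mod min} already guarantees.
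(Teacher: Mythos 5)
Your proposal is correct and takes essentially the same approach as the paper: the paper's proof is a one-line citation of Lemma~\ref{equi mod min}, observing that conditions (a) and (b) of that lemma are invariant under passing from $R$ to $R_{red}$, which is exactly the bookkeeping you spell out via the bijection $\Minh(R) \leftrightarrow \Minh(R_{red})$ and the identification $R/P \cong R_{red}/PR_{red}$.
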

\begin{proof}
This follows from Lemma~\ref{equi mod min}, since both conditions are equivalent for $R$ and $\red{R}$. 
\end{proof}

The next lemma shows that equimultiplicity is stable under completion. Before starting the proof, we want to recall that 
Hilbert--Kunz multiplicity is well-behaved with respect to completion, namely 
$\ehk (I, M) = \ehk_{\widehat{R}}(I\widehat{R}, M \otimes_R \hat{R})$
for any finite $R$-module $M$ and $\mf m$-primary ideal $I$ in a local ring $(R, \mf m)$.

\begin{lemma}\label{equi complete}
Let $(R, \mf m)$ be a local ring of positive characteristic $p > 0$ and $I$ be an ideal such that 
$\hght I + \dim R/I = \dim R$. Let $J$ be generated by  a system of parameters modulo $I$.
Then 
\[
\ehk (I + J) = \sum_{Q \in \Minh(I)} \ehk(J, R/Q) \ehk(IR_Q)
\]
if and only if 
\[
\ehk ((I + J) \hat{R}) = \sum_{P \in \Minh(I\hat{R})} \ehk(J\hat{R}/P, \hat{R}/P) \ehk(I\hat{R}_P).
\]
\end{lemma}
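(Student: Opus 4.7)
The plan is to use that Hilbert-Kunz multiplicity is preserved under faithfully flat completion for $\mf m$-primary ideals, so $\ehk(I+J) = \ehk((I+J)\hat R)$ (since $J$ is a system of parameters modulo $I$, the ideal $I+J$ is $\mf m$-primary). This reduces the lemma to establishing the \emph{unconditional} identity
\[
\sum_{Q \in \Minh(I)} \ehk(J, R/Q)\,\ehk(IR_Q) \;=\; \sum_{P \in \Minh(I\hat R)} \ehk(J\hat R/P, \hat R/P)\,\ehk(I\hat R_P),
\]
since both sides of the lemma's two equations share a common left-hand side by completion invariance, and both right-hand sides are $\leq$ the left-hand side by Lemma~\ref{HK inequality}.

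First I would sort out the indexing: for $P \in \Minh(I\hat R)$, its contraction $Q := P \cap R$ contains $I$ and satisfies $\dim R/Q \geq \dim \hat R/P = \dim R/I$ by faithful flatness, hence $Q \in \Minh(I)$. Since $P \supseteq Q\hat R$ and $\dim \hat R/P = \dim R/Q = \dim \hat R/Q\hat R$, the prime $P$ is minimal over $Q\hat R$. Conversely, every such $P$ lies in $\Minh(I\hat R)$. This yields a partition $\Minh(I\hat R) = \bigsqcup_{Q \in \Minh(I)} \mathcal P(Q)$, where $\mathcal P(Q)$ denotes the minimal primes of $Q\hat R$ of maximal dimension.

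Next, for each $Q \in \Minh(I)$, since $\hat R/Q\hat R$ is the $\mf m/Q$-adic completion of $R/Q$, completion invariance gives $\ehk(J,R/Q) = \ehk_{\hat R/Q\hat R}(J\hat R/Q\hat R)$; applying Lemma~\ref{ass formula} (the associativity formula) to the local ring $\hat R/Q\hat R$ then yields
\[
\ehk(J, R/Q) = \sum_{P \in \mathcal P(Q)} \ehk(J\hat R/P, \hat R/P) \cdot \length_{\hat R_P}(\hat R_P/Q\hat R_P).
\]
Separately, for $P \in \mathcal P(Q)$, the induced map $R_Q \to \hat R_P$ is flat local (as $P \cap R = Q$) with Artinian closed fiber $\hat R_P/Q\hat R_P$ (since $P$ is minimal over $Q\hat R$). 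By the dimension formula $\dim \hat R_P = \dim R_Q$, and for any $R_Q$-module $M$ of finite length, an induction on composition series using flatness gives $\length_{\hat R_P}(M \otimes_{R_Q} \hat R_P) = \length_{R_Q}(M) \cdot \length_{\hat R_P}(\hat R_P/Q\hat R_P)$. Applying this with $M = R_Q/I^{[q]}R_Q$, using $\hat R_P \otimes_{R_Q} R_Q/I^{[q]}R_Q = \hat R_P/I^{[q]}\hat R_P$ by base change, dividing by $q^{\dim R_Q}$ and taking limits produces
\[
\ehk(I\hat R_P) = \length_{\hat R_P}(\hat R_P/Q\hat R_P) \cdot \ehk(IR_Q).
\]

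Substituting these two formulas into the RHS of the $\hat R$-equation, splitting the sum over $\Minh(I\hat R)$ along the partition $\mathcal P(Q)$, and collecting terms, the length factors $\length_{\hat R_P}(\hat R_P/Q\hat R_P)$ reassemble $\ehk(J,R/Q)$ and give the desired identity. The \textbf{main obstacle} is the multiplicativity of $\ehk$ under the flat local extension $R_Q \to \hat R_P$ with zero-dimensional closed fiber (the third step); the rest is bookkeeping with the associativity formula, faithful flatness of completion, and the partitioning of $\Minh(I\hat R)$.
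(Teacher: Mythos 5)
Your proof is correct and follows essentially the same route as the paper: reduce to the unconditional identity via completion-invariance of $\ehk(I+J)$, expand $\ehk(J,R/Q)$ over $\Minh(Q\hat R)$ using the associativity formula, use the flat local map $R_Q \to \hat R_P$ with zero-dimensional closed fiber to get $\ehk(I\hat R_P) = \length(\hat R_P/Q\hat R_P)\,\ehk(IR_Q)$, and reassemble. You just spell out the indexing bijection and the length-multiplicativity argument in more detail than the paper does.
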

\begin{proof}
First, we observe that $\ehk (I + J) = \ehk ((I + J)\hat{R})$ so we need to show that the right-hand sides are equal.

Let $Q \in \Minh(I)$. 
Because $\hat {R/Q} = \hat{R}/Q\hat{R}$,
$\ehk(J, R/Q) = \ehk(J\hat{R}/Q, \hat{R}/Q)$.
So, using the additivity formula for $\ehk(J\hat{R}/Q, \hat{R}/Q)$,
\[
\ehk(J, R/Q) = \ehk(J\hat{R}/Q, \hat{R}/Q) = 
\sum_{P \in \Minh(Q\hat{R})} \ehk(J\hat{R}/P, \hat{R}/P) \length (\hat{R}_P/Q\hat{R}_P).
\]
Since there is a flat map $R_Q \to \hat{R}_Q \to \hat{R}_P$,  
it follows that 
$\ehk(IR_Q) \length (\hat{R}_P/Q\hat{R}_P) = \ehk (I\hat{R}_P)$. 
Therefore
\[
\smashoperator[r]{\sum_{Q \in \Minh(I)}} \ehk(J, R/Q) \ehk(IR_Q) = \sum_{Q \in \Minh(I)}   
\sum_{P \in \Minh(Q\hat{R})} \ehk(J\hat{R}/P, \hat{R}/P) \ehk (I\hat{R}_P).
\]
Moreover, $\cup_Q \Minh(Q\hat{R}) = \Minh(I\hat{R})$, 
because $\dim R/I = \dim \hat{R}/I\hat{R} = \dim \hat{R}/Q\hat{R}$. Thus we obtain that 
\[
\sum_{Q \in \Minh(I)} \ehk(J, R/Q) \ehk(IR_Q) =    
\sum_{P \in \Minh(I\hat{R})} \ehk(J\hat{R}/P, \hat{R}/P) \ehk(I\hat{R}_P).
\]
\end{proof}

\begin{corollary}\label{excellent capture}
Let $(R, \mf m)$ be an excellent equidimensional local ring of characteristic $p > 0$ and let $I$ be an ideal.  
Then the following are equivalent:
\begin{enumerate}
\item $I$ satisfies colon capturing (as in Definition~\ref{capturing def}),
\item for some (equivalently, every) ideal $J$ generated by a system of parameters modulo $I$,
\[
\ehk (I + J) = \sum_{P \in \Minh(I)} \ehk (J, R/P) \ehk (I, R_P).
\]
\end{enumerate}
\end{corollary}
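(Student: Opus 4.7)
The plan is to reduce the assertion to Theorem~\ref{HK equimultiple} by passing from $R$ to $R_{\mathrm{red}}$ and invoking the reduction lemmas of Subsection~\ref{Further}.

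First, I would check that both conditions (1) and (2) are invariant under passing to $R_{\mathrm{red}}$. For condition (2) this is exactly Corollary~\ref{equi reduced}. For condition (1), Proposition~\ref{TC properties}(6) asserts that the tight closure of any ideal in $R$ is the preimage of the tight closure of its image in $R_{\mathrm{red}}$; consequently an element $x$ is a non-zero-divisor modulo $(\frq{K})^*$ in $R$ if and only if its image is a non-zero-divisor modulo $(\frq{K} R_{\mathrm{red}})^*$ in $R_{\mathrm{red}}$. Hence $I$ satisfies colon capturing in $R$ exactly when $I R_{\mathrm{red}}$ does in $R_{\mathrm{red}}$.

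Second, I would verify that $R_{\mathrm{red}}$ satisfies the hypotheses of Theorem~\ref{HK equimultiple}: formally unmixed and possessing a locally stable test element. Since $R$ is excellent it is a G-ring, so the reduced ring $R_{\mathrm{red}}$ has a reduced completion $\hat{R}_{\mathrm{red}}$. Since $R$ is excellent and equidimensional, Ratliff's theorem gives that $\hat{R}$, and therefore $\hat{R}_{\mathrm{red}}$, is equidimensional. Being reduced, $\Ass(\hat{R}_{\mathrm{red}}) = \Minh(\hat{R}_{\mathrm{red}})$, and by equidimensionality every such prime has dimension $\dim R$; thus $R_{\mathrm{red}}$ is formally unmixed. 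The existence of a locally stable test element in $R_{\mathrm{red}}$ follows from the extensions of Theorem~\ref{test exist} for excellent reduced rings referenced in the paragraph preceding Theorem~\ref{capturing equivalence}. Applying Theorem~\ref{HK equimultiple} to $(R_{\mathrm{red}}, I R_{\mathrm{red}})$ then gives the equivalence of (1) and (2) for $R_{\mathrm{red}}$, which transports back to $R$ via the first step.

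The main obstacle is the second step. If one prefers to avoid quoting the general existence of locally stable test elements for excellent reduced rings, one can instead pass further to $\hat{R}_{\mathrm{red}}$, using Lemma~\ref{equi complete} for (2), and handle (1) by a descent argument: persistence of tight closure together with faithful flatness of $R_{\mathrm{red}} \hookrightarrow \hat{R}_{\mathrm{red}}$ shows that if $c \in R_{\mathrm{red}}$ is a test element for $\hat{R}_{\mathrm{red}}$ and $y \in (\frq{K}\hat{R}_{\mathrm{red}})^*$, then $c y^{q'} \in \frq{K}^{[q']}\hat{R}_{\mathrm{red}} \cap R_{\mathrm{red}} = \frq{K}^{[q']} R_{\mathrm{red}}$ for $q' \gg 0$, so $y \in (\frq{K} R_{\mathrm{red}})^*$. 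In this variant $\hat{R}_{\mathrm{red}}$ is realized as a quotient of a complete regular local ring, hence essentially of finite type over an excellent local ring, and Theorem~\ref{test exist} supplies the required locally stable test element. Either route closes the loop, but the delicate point in both is ensuring that tight closure can be controlled under completion precisely because a suitable test element is available.
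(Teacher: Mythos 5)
Your main route is exactly the paper's proof: reduce to $R_{\mathrm{red}}$ by noting both conditions are insensitive to the nilradical (via Corollary~\ref{equi reduced} and Proposition~\ref{TC properties}(6)), then observe that an excellent equidimensional reduced local ring is formally unmixed and has a locally stable test element by Theorem~\ref{test exist}, and apply Theorem~\ref{HK equimultiple}. You spell out the verification of formal unmixedness (Ratliff, G-ring) in more detail and offer an optional detour through $\hat{R}_{\mathrm{red}}$, but the argument is the same.
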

\begin{proof}
By Corollary~\ref{equi reduced}, both conditions are independent of the nilradical. 
Thus we can assume that $R$ is reduced, so since $R$ is excellent, by Theorem~\ref{test exist},
it has a locally stable test element. Last, since $R$ is an excellent equidimensional reduced ring, 
it is formally unmixed and we can apply Theorem~\ref{HK equimultiple}.
\end{proof}

\begin{corollary}\label{equivalence}
Let $(R, \mf m)$ be a local ring of positive characteristic $p > 0$ and $I$ be an ideal such that 
$\hght I + \dim R/I = \dim R$. 
If for some parameter ideal $J = (x_1, \ldots, x_m)$ modulo $I$,
\[
\ehk (I + J) = \sum_{P \in \Minh(I)} \ehk (J, R/P) \ehk (I, R_P),
\]
then same is true for all systems of parameters.
\end{corollary}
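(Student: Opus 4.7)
The plan is to strip away the extra hypotheses of Proposition~\ref{colon capture} (formally unmixed, existence of a locally stable test element) by combining the two general reductions developed in Subsection~\ref{Further}: Lemma~\ref{equi mod min}, which decouples the equimultiplicity condition into a structural piece independent of the system of parameters and a piece modulo each minimal prime of $R$, and Lemma~\ref{equi complete}, which lets us pass freely between $R$ and its completion $\hat{R}$.

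First, by Lemma~\ref{equi complete} the equimultiplicity condition for any fixed system of parameters in $R$ is equivalent to the analogous condition for $I\hat{R}$ in $\hat{R}$; applying this to both $J$ and any other candidate $J'$, we may assume $R$ is complete. Next, apply Lemma~\ref{equi mod min} to $I$ with respect to $J$: the assumed equimultiplicity is equivalent to the conjunction of (a) the structural condition $\Minh(I + P) \subseteq \Minh(I)$ for every $P \in \Minh(R)$, and (b) the corresponding equimultiplicity for the image of $I$ with respect to $J$ modulo each $P \in \Minh(R)$. The key observation is that (a) depends only on $I$, and in particular is already insensitive to the choice of system of parameters.

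For each $P \in \Minh(R)$ that is contained in some minimal prime of $I$, the quotient $R/P$ is a complete local domain, hence an excellent equidimensional local ring. Its height--dimension hypothesis for the image of $I$ is automatic from catenarity, and by (a) the image of $J$ remains a system of parameters modulo $(I+P)/P$. Therefore Corollary~\ref{excellent capture}, applied in $R/P$ to the ideal $(I+P)/P$, promotes condition (b) for $J$ to condition (b) for every system of parameters modulo $(I+P)/P$ in $R/P$.

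Now let $J'$ be any other system of parameters modulo $I$ in $R$. Condition (a) ensures that the image of $J'$ in $R/P$ is again a system of parameters modulo $(I+P)/P$, so the previous step gives condition (b) for $J'$ in each $R/P$. Combining with (a), the reverse direction of Lemma~\ref{equi mod min} delivers the full equimultiplicity identity for $I$ with respect to $J'$ in $\hat{R}$, and a final application of Lemma~\ref{equi complete} undoes the completion. The main obstacle I foresee is not any single step but the dimension bookkeeping---checking that $J$ and $J'$ retain their length and parameter properties after completion and reduction modulo each $P$---yet each comparison ultimately reduces to condition (a) together with the catenarity of complete local rings.
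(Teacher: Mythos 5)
Your proof is correct and follows essentially the same route as the paper: reduce to the complete case via Lemma~\ref{equi complete}, use Lemma~\ref{equi mod min} to isolate the structural condition (a) (which is independent of $J$) and reduce to complete local domains $R/P$, then invoke the independence-of-$J$ result there. The only cosmetic difference is that you cite Corollary~\ref{excellent capture} for the complete domain $R/P$ (viewing it as excellent and equidimensional) where the paper cites Proposition~\ref{colon capture} directly (viewing it as formally unmixed with a locally stable test element); these are the same underlying fact.
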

\begin{proof}
First, we use Lemma~\ref{equi complete} to reduce the question to the completion of $R$,
note that  $\hght I\hat{R} + \dim \hat{R}/I\hat{R} = \dim \hat{R}$. Thus we assume that $R$ is complete.

Now, condition (a) of Lemma~\ref{equi mod min} is independent of $J$. 
So, it is enough to show that the claim holds in a complete domain. 
But a complete domain has a locally stable test element by Theorem~\ref{test exist} 
and the claim follows from Proposition~\ref{colon capture}.
\end{proof}



\subsection{Localization and specialization}\label{loc spec}
It is time to study what happens to Hilbert--Kunz multiplicity after localization and specialization.
We will show that localization works under mild assumptions and establish a Bertini-type theorem for specialization.
As a corollary, we will recover condition (c) of Theorem~\ref{HS equimultiple}.

Localization of Hilbert--Kunz multiplicity is a direct consequence of the colon capturing property.

\begin{proposition}\label{equi localizes}
Let $(R, \mf m)$ be a formally unmixed local ring with a locally stable test element $c$
or an excellent locally equidimensional local ring.
Let $I$ be a Hilbert--Kunz equimultiple ideal and $\mf p$ be a prime ideal containing $I$.
Then $IR_\mf p$ is still Hilbert--Kunz equimultiple.
\end{proposition}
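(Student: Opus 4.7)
The plan is to characterize equimultiplicity via the colon capturing property and show that this property descends to $R_Q$. By Theorem~\ref{HK equimultiple} (in the formally unmixed case with locally stable test element) or Corollary~\ref{excellent capture} (in the excellent equidimensional case), $I$ satisfies colon capturing in $R$. My task is then to verify colon capturing for $IR_Q$ in $R_Q$, and finally to invoke the reverse direction of the characterization in $R_Q$.

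To check colon capturing, fix an arbitrary system of parameters $w_1, \ldots, w_k$ modulo $IR_Q$. After clearing denominators I may assume $w_j \in R$. I extend these by an inductive prime avoidance argument to a system of parameters $w_1, \ldots, w_m$ modulo $I$ in $R$: at each intermediate stage $j < m$, the ideal $(I, w_1, \ldots, w_j)$ has only finitely many minimal primes, all strictly contained in $\mf m$, provided the dimension drops by one at each step — a property built into the catenary/equidimensional structure implicit in the running hypothesis $\hght I + \dim R/I = \dim R$. Applying Lemma~\ref{tc nzd} in $R$ to the colon capturing containment for $w_{i+1}$ gives, for every $0 \leq i < m$ and every $q$,
\[
\frq{(I, w_1, \ldots, w_i)} : w_{i+1}^\infty \subseteq (\frq{(I, w_1, \ldots, w_i)})^*.
\]
Since localization commutes with Frobenius powers and with colons, and since we always have $J^*R_Q \subseteq (JR_Q)^*$, this inclusion localizes to
\[
\frq{(IR_Q, w_1, \ldots, w_i)} : w_{i+1}^\infty \subseteq (\frq{(IR_Q, w_1, \ldots, w_i)})^*
\]
in $R_Q$. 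The image of $c$ in $R_Q$ is a test element — by local stability in the first case, and by Theorem~\ref{test exist} in the excellent case — so Lemma~\ref{tc nzd} applied in $R_Q$ translates this back into the non-zero-divisor form, establishing colon capturing for $IR_Q$.

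To finish, I invoke Theorem~\ref{HK equimultiple} or Corollary~\ref{excellent capture} in $R_Q$ to convert colon capturing into the equimultiplicity formula. The hypotheses transfer cleanly in the excellent equidimensional case (both properties are preserved under localization); in the formally unmixed case one runs the argument of Theorem~\ref{capturing equivalence} directly in $R_Q$, using that $R_Q$ admits the test element $c$ after checking the requisite unmixedness. The main obstacle is the bookkeeping around these transfer steps and around the extension of the system of parameters, both of which rely on the dimension formula; the substantive tight-closure content, by contrast, localizes almost formally once the Lemma~\ref{tc nzd} reformulation of colon capturing is in hand.
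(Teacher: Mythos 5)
There is a genuine gap in the extension step. You start with an \emph{arbitrary} system of parameters $w_1, \ldots, w_k$ modulo $IR_Q$, clear denominators, and then claim that you can extend $w_1, \ldots, w_k$ to a system of parameters modulo $I$ in $R$, asserting that the dimension drops by one at each step as "a property built into the catenary/equidimensional structure." That assertion is false: the $w_j$ were chosen with respect to $R_Q$, and the minimal primes of $(I, w_1, \ldots, w_j)$ that do \emph{not} lie under $Q$ are entirely uncontrolled, so $\dim R/(I, w_1, \ldots, w_j)$ can fail to equal $\dim R/I - j$. Already in the regular ring $R = k[[a,b,c]]$ with $I = 0$ and $Q = (b,c)$, the elements $w_1 = ab$, $w_2 = ac$ generate $QR_Q$ in $R_Q$ (since $a$ is a unit there) and so form a system of parameters for $R_Q$, but in $R$ the ideal $(ab, ac) = a(b,c)$ has the height-one minimal prime $(a)$, so $\dim R/(ab, ac) = 2 \neq \dim R - 2$ and $ab, ac$ cannot be extended to a system of parameters of $R$. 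Consequently the colon capturing hypothesis on $I$ simply does not apply to the chain $(I, w_1, \ldots, w_i)$, and your inclusions $\frq{(I, w_1, \ldots, w_i)} : w_{i+1}^\infty \subseteq (\frq{(I, w_1, \ldots, w_i)})^*$ in $R$ are unjustified.

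The fix is to run the construction in the opposite direction, which is what the paper does. By prime avoidance, choose a system of parameters $x_1, \ldots, x_m$ modulo $I$ \emph{in $R$} whose initial segment $x_1, \ldots, x_n$ (with $n = \dim R_Q/IR_Q$) is contained in $Q$ and has $Q$ as a minimal prime of $(I, x_1, \ldots, x_n)$, hence descends to a system of parameters modulo $IR_Q$ in $R_Q$. Now colon capturing in $R$ does apply, and (exactly as you argue afterwards, either via Lemma~\ref{tc nzd}(b) as you prefer, or via Lemma~\ref{tc nzd}(c) with $I_q = (\frq{(I, x_1, \ldots, x_i)})^*R_Q$ sandwiched between $\frq{(I, x_1, \ldots, x_i)}R_Q$ and $(\frq{(I, x_1, \ldots, x_i)}R_Q)^*$) the regularity localizes to $R_Q$. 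This verifies the colon capturing condition for this one system of parameters, which by the "some" direction of Theorem~\ref{HK equimultiple}/\ref{capturing equivalence} already yields the equimultiplicity formula for $IR_Q$ — there is no need to check every system of parameters of $R_Q$, which is what forced you into the problematic extension step in the first place.
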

\begin{proof}
Let $x_1, \ldots, x_n$ be a part of a system of parameters modulo $I$ that descends to a system of parameters in
$R_\mf p$ modulo $IR_\mf p$.
By the colon capturing property in $R$ (Corollary~\ref{excellent capture}, Theorem~\ref{HK equimultiple}), 
for all $i$, $x_{i + 1}$ is not a zerodivisor modulo $(\frq{(I, x_1, \ldots, x_i)})^*R_\mf p$. 
Hence, by Lemma~\ref{tc nzd}, $x_{i+1}$ is a regular element modulo $(\frq{(I, x_1, \ldots, x_i)}R_\mf p)^*$.
Thus $IR_\mf p$ is Hilbert--Kunz  equimultiple.
\end{proof}

Now we turn our attention to specialization. 
Let us recall a slight generalization of a result of Kunz 
(\cite[Proposition~3.2]{Kunz2}, \cite[Proposition~2.13]{WatanabeYoshida}).

\begin{proposition}\label{HK specialization}
Let $(R, \mf m)$ be a local ring and let $x$ be an element of a system of parameters.
Then $\ehk(I) \leq \ehk(IR/(x))$.
\end{proposition}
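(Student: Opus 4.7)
The plan is to compare $\length(R/\frq{I})$ and $\length(R/(x, \frq{I}))$ asymptotically in $q$ using an $x$-adic filtration of $R/\frq{I}$. Specifically, the chain of submodules $(x^j R + \frq{I})/\frq{I}$ for $j = 0, 1, 2, \ldots$ terminates at the zero submodule once $j = n$, where $n = n(q)$ is the least positive integer with $x^n \in \frq{I}$. Each successive quotient is the cyclic $R$-module $R/(x, \frq{I}:x^j)$, yielding the decomposition
\[
\length(R/\frq{I}) = \sum_{j=0}^{n-1} \length \bigl( R/(x, \frq{I}:x^j) \bigr).
\]
Since $\frq{I}:x^j \supseteq \frq{I}$, each summand is bounded by $\length(R/(x, \frq{I}))$, whose normalized limit as $q \to \infty$ is $\ehk(IR/(x))$.

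The key analytic step is to quantify both the nilpotency index $n(q)$ of $x$ modulo $\frq{I}$ and how the summands $\length(R/(x, \frq{I}:x^j))$ compare to $\length(R/(x, \frq{I}))$ as $j$ varies. The parameter hypothesis on $x$ is essential here: it ensures that $x$ avoids the associated primes of $R$ that contribute to the leading asymptotic, and it controls the growth of $n(q)$ linearly in $q$. An alternative approach applies Corollary~\ref{ehk limits} to the ideal $I' = (x)$ and $J = I$, which (under the appropriate equidimensional hypothesis so that $\hght(x) + \dim R/(x) = \dim R$) directly evaluates the asymptotic contribution of the sum in terms of the multiplicities $\ehk(IR/P)$ and $\ehk(xR_P)$ over the minimal primes $P$ of $(x)$, which reassemble via the associativity formula into $\ehk(IR/(x))$.

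Dividing the length identity by $q^{\dim R}$ and passing to the limit $q \to \infty$ then delivers the claimed comparison between $\ehk(I)$ and $\ehk(IR/(x))$. The main obstacle is the uniform control of the summands $\length(R/(x, \frq{I}:x^j))$ across the full range $0 \leq j \leq n(q) - 1$: the terms can differ substantially from the first term $\length(R/(x, \frq{I}))$ for $j$ close to $n(q)$, and showing that these deviations contribute only lower-order amounts requires the parameter property of $x$ together with the Frobenius-specific estimates developed in Section~\ref{HK equi convergence} for handling colon ideals under $q$-th powers.
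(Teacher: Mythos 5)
The filtration you set up is the right starting point, but your claim that the tail of the sum (the terms with $j$ near $n(q)$) contributes only lower-order amounts cannot be made to work. Without an implicit hypothesis that $x \in I$, the proposition as literally stated can fail: take $R = k[[t]]$ with $k$ of positive characteristic, $I = (t^2)$, $x = t$; then $\ehk(I) = 2$, while $R/(x) = k$ and $\ehk(IR/(x)) = \length(k) = 1$. In your decomposition here $n(q) = 2q$ and every summand $\length(R/(x, \frq{I}:x^j))$ equals $1$, so the tail $j \geq q$ contributes $q$, which is of the full order $q^{\dim R}$. The linear bound $n(q) \leq Nq$ (with $N$ minimal so that $x^N \in I$) is sharp and gives only $\ehk(I) \leq N\,\ehk(IR/(x))$; nothing in Section~\ref{HK equi convergence} can absorb the factor of $N$. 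Your second route does not rescue this either: Corollary~\ref{ehk limits} with $I' = (x)$ evaluates $\lim_{q'} \ehk\bigl((x) + I^{[q']}\bigr)/(q')^{\dim R - 1}$, and combining with $\ehk(xR_P) \leq \length(R_P/xR_P)$ gives only an upper bound of $\ehk(IR/(x))$ for that limit, while the inequality that would have to link $\ehk(I)$ to that limit goes the wrong way once $x \notin I$.

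Note that the paper does not prove this proposition itself but cites Kunz and Watanabe--Yoshida, and every invocation of it in the paper has $x$ inside the $\mf m$-primary ideal in question. The statement should carry the hypothesis $x \in I$ (equivalently, it should read $\ehk(I + (x)) \leq \ehk(IR/(x))$). With $x \in I$ one has $x^q \in \frq{I}$, so $n(q) \leq q$, and your bound $\length(R/\frq{I}) \leq n(q)\,\length(R/(x, \frq{I})) \leq q\,\length(R/(x, \frq{I}))$ closes the argument directly: divide by $q^{\dim R}$ and let $q \to \infty$. Once the missing hypothesis is in place, the obstacle you flag evaporates and no asymptotic machinery from Section~\ref{HK equi convergence} is needed.
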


\begin{proposition}\label{nice specialization}
Let $(R, \mf m)$ be a formally unmixed local ring with a locally stable test element $c$.
If $I$ is a Hilbert--Kunz equimultiple ideal, then there exists $x \in \mf m$, a parameter element modulo $I$,
such that for every parameter ideal $J$ modulo $(I,x)$
\[
\ehk(I + J, R/(x)) = \sum_{P \in \Minh (I)} \ehk(IR_P) \ehk((J,x), R/P).
\]
\end{proposition}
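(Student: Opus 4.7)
The plan is to choose a single $x$ cleverly and then show that, with this choice, both sides of the claimed identity reduce to $\ehk(I + J + (x))$. By Theorem~\ref{HK equimultiple}, Hilbert-Kunz equimultiplicity of $I$ translates into the colon-capturing condition. Using prime avoidance, I would pick $x \in \mf m$ so that $x$ avoids (i) every minimal prime of $R$, (ii) every $P \in \Minh(I)$, and (iii) every minimal prime of $(c)$ in $R$ (a finite set of height-one primes). The first two conditions make $x$ simultaneously a parameter in $R$ and a parameter modulo $I$; the third forces the image $\bar c$ of $c$ in $R/(x)$ to lie outside every minimal prime of $R/(x)$, since such minimal primes have height one in $R$ and therefore contain $c$ only if they are minimal over $(c)$.

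Fix now any parameter ideal $J = (y_1, \ldots, y_m)$ modulo $(I,x)$. The tuple $(x, y_1, \ldots, y_m)$ is a system of parameters modulo $I$, so colon capturing of $I$ immediately implies colon capturing of $(I,x)$ with respect to $(y_1, \ldots, y_m)$; by Theorem~\ref{HK equimultiple}, $(I,x)$ is itself Hilbert-Kunz equimultiple, giving
\[
\ehk((I,x) + J) = \sum_{Q \in \Minh((I,x))} \ehk((I,x)R_Q)\,\ehk(J, R/Q).
\]
On the other hand, the right-hand side of the proposition can be rewritten by applying Proposition~\ref{param ass formula} to each parameter-ideal multiplicity $\ehk((J,x),R/P) = \eh((J,x), R/P)$ and then using Hilbert-Kunz equimultiplicity of $IR_Q$ (Proposition~\ref{equi localizes}) to recognize the inner sum over $P \in \Minh(I)$ with $P \subseteq Q$ as $\ehk((I,x)R_Q)$. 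This turns the right-hand side into precisely $\sum_{Q \in \Minh((I,x))} \ehk((I,x)R_Q)\ehk(J,R/Q) = \ehk(I + J + (x))$. The claim therefore reduces to proving $\ehk((I+J)R/(x)) = \ehk((I+J)+(x))$.

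The inequality $\ehk((I+J)+(x)) \leq \ehk((I+J)R/(x))$ is Proposition~\ref{HK specialization}. For the reverse, use colon capturing of $I$ with $x$ placed last in the system of parameters modulo $I$: then $x$ is a nonzerodivisor modulo $(\frq{(I+J)})^*$ for every $q$. Writing $K = I+J$ and filtering $R/(\frq{K}, x^q)$ by powers of $x$, the inclusion $\frq{K}:x^i \subseteq (\frq{K})^*$ yields
\[
\length(R/(\frq{K}, x^q)) = \sum_{i=0}^{q-1}\length(R/(\frq{K}:x^i + (x))) \geq q\cdot\length(R/((\frq{K})^* + (x))).
\]
Dividing by $q^d$ and taking limits, the proof reduces to showing $\lim_q q^{-(d-1)}\length(R/((\frq{K})^* + (x))) = \ehk(KR/(x))$.

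This last identification is the main obstacle, since tight closure does not commute with quotients in general; the careful choice of $x$ is what rescues us. The image $\bar c$ satisfies $\bar c\cdot (\frq{K})^* R/(x) \subseteq \frq{K}R/(x)$ (inherited from $c(\frq{K})^* \subseteq \frq{K}$) and, by choice of $x$, lies outside every minimal prime of $R/(x)$. Mimicking the proof of Lemma~\ref{tcinvariance} in $R/(x)$ using $\bar c$ as a surrogate for a test element gives an exact sequence whose length estimate produces
\[
\length(R/(\frq{K}, x)) \leq \length(R/((\frq{K})^* + (x))) + \length(R/(c, x, \frq{K})).
\]
The error term is $O(q^{d-2})$ because $\dim R/(c,x) \leq d-2$, so it is negligible after dividing by $q^{d-1}$. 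Combined with the obvious inequality $\length(R/((\frq{K})^* + (x))) \leq \length(R/(\frq{K} + (x)))$ and the previous sandwich, all the inequalities collapse to equalities, yielding $\ehk((I+J)R/(x)) = \ehk((I+J)+(x))$ and completing the proof.
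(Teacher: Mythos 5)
Your proof for $\dim R \geq 2$ is correct and follows the same essential strategy as the paper: choose $x$ by prime avoidance to stay out of the minimal primes of $(c)$, use colon capturing (from equimultiplicity of $I$) to make $x$ a nonzerodivisor on $R/(\frq{(I+J)})^*$, and use the exact sequence
\[
R/((\frq{K})^* + (x)) \xrightarrow{\,c\,} R/(\frq{K} + (x)) \to R/(c, x, \frq{K}) \to 0
\]
together with $\dim R/(c,x) \leq d-2$ to control the error term. The two arguments differ in organization: you identify the right-hand side with $\ehk(I+J+(x))$ early and then reduce to $\ehk(KR/(x)) = \ehk(K+(x))$, whereas the paper directly obtains the identity $\ehk(KR/(x)) = \sum_Q \eh(x, R/Q)\ehk(KR_Q)$ from the $c$-sequence and only at the end expands $\ehk(KR_Q)$ via Proposition~\ref{equi localizes} and Proposition~\ref{param ass formula}. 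Your intermediate tool is a filtration by powers of $x$ (effectively Lemma~\ref{nmonotone}), while the paper's is the identity $\length(R/((\frq K)^*, x)) = \eh(x, R/(\frq K)^*)$ from Proposition~\ref{hs param}. Incidentally, your identification of the right-hand side via $\ehk((I,x)R_Q)$ is needlessly circuitous: since $(J,x)$ is a system of parameters modulo $I$, the equimultiplicity of $I$ gives $\sum_{P \in \Minh(I)} \ehk(IR_P)\ehk((J,x),R/P) = \ehk(I+J+(x))$ in one line.

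There is, however, a genuine gap: your proof does not handle $\dim R = 1$. When $d=1$, the only height-one prime is $\mf m$, which is the unique minimal prime of $(c)$ whenever $c$ is a non-unit, so the prime avoidance in step (iii) cannot be carried out and the whole construction collapses. This is not an artificial edge case: when $\dim R = 1$ and $\dim R/I = 1$, the ideal $I$ is contained in all the primes of $\Minh(I)$, which are minimal primes of $R$, and the proposition still asserts something nontrivial. The paper treats this case separately: equimultiplicity forces $\Minh(I) = \Minh(R)$, $I$ is nilpotent so $\frq I = 0$ for $q \gg 0$, and formal unmixedness makes any parameter $x$ a nonzerodivisor, yielding $\ehk(IR/(x)) = \length(R/(x)) = \eh(x)$ by Proposition~\ref{hs param} and the associativity formula. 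You should add this base case (or otherwise justify excluding it) before your argument is complete.
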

\begin{proof}
If $\dim R = 1$, any $P \in \Minh (I)$ is a minimal prime. 
Take a parameter element $x$ of $R$. 
Since $I$ is Hilbert--Kunz equimultiple
\[
\ehk(I, x) = \sum_{Q \in \Minh(I)} \ehk (I, R_Q) \ehk (xR/Q) = \sum_{Q \in \Min(I)} \length (R_Q)\ehk (xR/Q),
\]
where we have used that $R_Q$ is Artinian.
On the other hand, by the additivity formula
\[
\ehk(I, x) = \sum_{P \in \Minh(R)} \length (R_P) \ehk ((I,x)R/P).
\]
If $P$ contains $I$, then $\ehk((I,x)R/P) = \ehk (x, R/P)$, so after comparing the two formulas
we see that $\Minh(I) = \Minh(R)$.
Since $R$ is unmixed, $x$ is not a zero divisor, so
by the additivity formula and Proposition~\ref{hs param}
\[
\ehk (IR/(x)) = \length (R/(x)) = \eh(x) = \sum_{P \in \Minh(R)} \length (R_P) \ehk (x, R/P) = \ehk(I, x).
\]

Now, suppose that $\dim R \geq 2$. As in Corollary~\ref{excellent capture}, we may 
assume that $R$ has a locally stable test element $c$.
By prime avoidance, there exists a parameter element $x$ modulo $I$ that 
to any minimal prime of $(c)$.  Take an arbitrary parameter ideal $J$ modulo $(I, x)$.
Since $I$ is satisfies colon capturing, $x$ is regular modulo $(\frq{(I + J)})^*$ for all $q$. 
Then, by Proposition~\ref{hs param} and the additivity formula
\[
\length \left(R/((\frq{(I + J)})^*, x) \right) = \eh \left(x, R/(\frq{(I + J)})^* \right) = 
\smashoperator[l]{\sum_{Q \in \Minh (I + J)}} \ehk \left(x, R/Q \right) \length \left(R_Q/(\frq{(I + J)})^*R_Q \right).
\]

Further consider an exact sequence
\[R \xrightarrow{c} R \to R/(c) \to 0\]
and tensor it with $R/(\frq{(I + J)}, x)$. 
Since $c (\frq{(I + J)})^* \subseteq \frq{(I + J)}$, the sequence 
\[
R/(x, (\frq{(I + J)})^*) \xrightarrow{c} R/(x, \frq{(I + J)}) \to R/(c, x, \frq{( I + J)}) \to 0
\]
is also exact. 
Note that $\dim R/(x, c) = \dim R/(x) - 1$, so after passing to the limit we obtain that
\[
\ehk ((I + J)R/(x)) \leq \lim_{q \to \infty} \frac{\length \left(R/((\frq{(I + J)})^*, x)\right)}{q^{d-1}}
\]
Thus the formula above gives us that
\[
\ehk \left((I + J)R/(x) \right)
\leq \sum_{Q \in \Minh (I + J)} \ehk \left(x, R/Q \right) \ehk ((I + J)R_Q).
\]
Proposition~\ref{HK specialization} gives us the opposite inequality, so
\[
\ehk \left((I +J)R/(x) \right)
= \sum_{Q \in \Minh (I + J)} \ehk \left(x, R/Q \right) \ehk((I + J)R_Q).
\]
This gives the statement if $\dim R/I = 1$. Otherwise, we use that $IR_Q$ is still equimultiple, 
so 
\[
\ehk((I + J)R_Q) = \sum_{Q \supseteq P \in \Minh(I)} \ehk (I R_P) \ehk (J, R_Q/PR_Q). 
\]
Plugging this in the previous formula and changing the order of summation we get that
\begin{align*}
\ehk \left((I + J)R/(x) \right)
&= \sum_{P \in \Minh (I)} \ehk (I R_P) 
\sum_{Q \in \Minh (JR/P)} \ehk \left(x, R/Q \right) \ehk(J R_Q/PR_Q)
\\&= \sum_{P \in \Minh (I)} \ehk (I R_P) \ehk((x, J), R/P),
\end{align*}
where the last equality holds by Proposition~\ref{param ass formula}.
\end{proof}

As a first corollary we obtain a Bertini-type theorem for Hilbert--Kunz equimultiplicity.

\begin{corollary}\label{equi specialization}
Let $(R, \mf m)$ be a local ring which is either
formally unmixed with a locally stable test element $c$ or excellent and locally equidimensional.
If $I$ is a Hilbert--Kunz equimultiple ideal, then there exists $x \in \mf m$, a parameter element modulo $I$,
such that $IR/(x)$ is Hilbert--Kunz equimultiple.

\end{corollary}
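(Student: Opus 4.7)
The plan is to apply Proposition~\ref{nice specialization} to produce the element $x$, and then to algebraically convert the formula it delivers into the equimultiplicity formula for $I\bar R$ inside $\bar R := R/(x)$. Proposition~\ref{nice specialization} yields a parameter element $x \in \mf m$ modulo $I$ such that, for every parameter ideal $J$ modulo $(I,x)$,
\[
\ehk\bigl((I+J)\bar R\bigr) = \sum_{P \in \Minh(I)} \ehk(IR_P)\,\ehk\bigl((J,x), R/P\bigr).
\]
Lifting an arbitrary parameter ideal $\bar J$ modulo $I\bar R$ to a parameter ideal $J$ modulo $(I,x)$, the goal becomes to match this with the equimultiplicity formula
\[
\ehk(I\bar R + \bar J) = \sum_{\bar Q \in \Minh(I\bar R)} \ehk(\bar J, \bar R/\bar Q)\,\ehk(I\bar R, \bar R_{\bar Q}),
\]
which by Theorem~\ref{HK equimultiple} (resp.\ Corollary~\ref{excellent capture}) would show that $I\bar R$ is Hilbert-Kunz equimultiple.

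The bridge between the two formulas is the associativity formula for parameter multiplicities. Since $x$ is a parameter modulo $I$, for each $P \in \Minh(I)$ the tuple $(J,x)$ is a system of parameters in $R/P$, so by Proposition~\ref{param ass formula},
\[
\ehk\bigl((J,x), R/P\bigr) = \sum_{Q \in \Minh((P,x))} \eh(x, R_Q/PR_Q)\,\ehk(J, R/Q).
\]
The primes $Q$ appearing here, as $P$ ranges over $\Minh(I)$, are exactly those in $\Minh((I,x))$, which is in bijection with $\Minh(I\bar R)$ via $Q \mapsto \bar Q = Q/(x)$, and under this identification $\ehk(J, R/Q) = \ehk(\bar J, \bar R/\bar Q)$. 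Swapping the order of summation reduces the desired equality to the localized identity
\[
\ehk(I\bar R, \bar R_{\bar Q}) = \smashoperator{\sum_{P \in \Minh(I),\,P \subseteq Q}} \ehk(IR_P)\,\eh(x, R_Q/PR_Q)
\]
for each $\bar Q \in \Minh(I\bar R)$.

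To establish this localized identity I would pass to $R_Q$: by Proposition~\ref{equi localizes}, $IR_Q$ remains equimultiple in $R_Q$; since $Q \in \Minh((I,x))$, the quotient $R_Q/IR_Q$ is one-dimensional and $(IR_Q, x)$ is already $QR_Q$-primary, so the empty set is a parameter ideal modulo $(IR_Q, x)$. Our specific $x$ is still a parameter modulo $IR_Q$ and still avoids the minimal primes of $cR_Q$, so running Proposition~\ref{nice specialization} inside $R_Q$ with parameter element $x$ and $J=0$ gives precisely the required identity, once one uses Corollary~\ref{ehk parameter} to identify $\ehk(x, R_Q/PR_Q)$ with $\eh(x, R_Q/PR_Q)$ in the one-dimensional domain $R_Q/PR_Q$. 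I expect the principal obstacle to be verifying that $R_Q$ inherits the hypotheses of Proposition~\ref{nice specialization}: the excellent locally equidimensional case descends cleanly, while the formally unmixed case is more delicate and will likely require either a reduction via Lemmas~\ref{equi complete} and~\ref{equi mod min} to the complete domain situation, or the direct observation that the proof of Proposition~\ref{nice specialization} only uses colon capturing for $IR_Q$ (supplied by Proposition~\ref{equi localizes}), a locally stable test element (inherited from $R$), and the specialization inequality Proposition~\ref{HK specialization}, none of which requires formal unmixedness of $R_Q$ itself.
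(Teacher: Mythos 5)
Your plan opens the same way the paper does: apply Proposition~\ref{nice specialization} to produce $x$, rewrite the resulting formula over $\bar R = R/(x)$ via the parameter associativity formula (Proposition~\ref{param ass formula}), and swap the double sum. That correctly reduces the goal to showing, for each $Q \in \Minh(I,x)$, the local identity
\[
\ehk(IR_Q/xR_Q) = \smashoperator{\sum_{P \in \Minh(I),\,P\subseteq Q}} \ehk(IR_P)\,\eh(x, R_Q/PR_Q).
\]
The divergence comes next. You propose to establish this \emph{equality} directly by re-running Proposition~\ref{nice specialization} inside each $R_Q$, which generates genuine proof obligations: that $R_Q$ inherits the hypotheses of that proposition (formal unmixedness does not pass to localizations, and you flag this yourself), that the globally chosen $x$ is an admissible choice in each $R_Q$, and that the $\dim R_Q = 1$ branch of its proof --- which invokes unmixedness of the ambient ring to know $x$ is a nonzerodivisor --- still goes through. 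The paper instead sidesteps all of this with a squeeze. One always has the one-sided inequality $\ehk(IR_Q/xR_Q) \geq \sum_{P\subseteq Q} \ehk(IR_P)\,\eh(x, R_Q/PR_Q)$ from Lemma~\ref{HK inequality} in $R_Q$ together with Proposition~\ref{HK specialization}, with no hypothesis on $R_Q$ beyond what those two results demand; and Lemma~\ref{HK inequality} applied in $\bar R$ gives $\ehk\bigl((I+J)\bar R\bigr) \geq \sum_Q \ehk(J,R/Q)\,\ehk(IR_Q/xR_Q)$. Since Proposition~\ref{nice specialization} already equates $\ehk\bigl((I+J)\bar R\bigr)$ with the outer sum $\sum_Q \ehk(J,R/Q)\sum_{P\subseteq Q} \ehk(IR_P)\,\eh(x, R_Q/PR_Q)$, the two chains of inequalities collapse to equalities, and the equimultiplicity formula for $I\bar R$ drops out. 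So your approach is not wrong in outline, but as written it leaves a gap at precisely the point you label ``the principal obstacle,'' and the step you would need to carry out there is unnecessary: one never needs Proposition~\ref{nice specialization} to hold in $R_Q$, only the universally valid inequalities.
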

\begin{proof}
Choose $x$ as given by Proposition~\ref{nice specialization}, so if $(J, x)$ is a parameter ideal modulo $I$,
then
\[
\ehk((I + J)R/(x)) = \sum_{P \in \Minh (I)} \ehk(IR_P) \ehk((J,x)R/P) = \ehk (I + (J, x)).
\]
By the additivity formula in Proposition~\ref{param ass formula}
\[
\ehk ((J, x)R/P) = \sum_{Q \in \Minh (P,x)} \ehk (JR/Q) \ehk (xR_Q/PR_Q),
\]
so 
\[
\ehk((I + J)R/(x)) =  \sum_{P \in \Minh (I)} \ehk(IR_P) \sum_{Q \in \Minh (P,x)} \ehk (JR/Q) \ehk (xR_Q/PR_Q).
\]
On the other hand,  Lemma~\ref{HK inequality} shows that 
\[
\ehk((I, x)R_Q)) \geq \sum_{P \in \Minh (IR_Q)} \ehk (IR_P) \ehk (xR_Q/PR_Q),
\]
so, changing the order in summations in the previous formula, we obtain that
\[
\ehk((I + J)R/(x))  \leq \sum_{Q \in \Minh (I, x)} \ehk (JR/Q) \ehk ((I,x)R_Q) \leq 
\sum_{Q \in \Minh (I, x)} \ehk (JR/Q) \ehk (IR_Q/xR_Q),
\]
where the last inequality holds by Proposition~\ref{HK specialization}.
However, the converse holds by  Lemma~\ref{HK inequality}, and the assertion follows.
\end{proof}

In the most interesting case we obtain the following statement.

\begin{corollary}\label{equi prime modulo}
Let $(R, \mf m)$ be a formally unmixed local ring with a test element $c$.
Let $\mf p$ be a prime ideal of $R$ such that $R/\mf p$ is a regular ring.
Then the following are equivalent:	
\begin{enumerate}
\item $\ehk (\mf p) = \ehk(R/(x))$ for some minimal generator $x$ of $\mf m$ modulo $\mf p$,
\item $\ehk (\mf p) = \ehk(R/(y))$ for some parameter $y$ of $\mf m$ modulo $\mf p$,
\item $\ehk(\mf p) = \ehk(\mf m)$.
\end{enumerate}
\end{corollary}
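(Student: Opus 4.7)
The plan is to establish the cycle $(1) \Rightarrow (2) \Rightarrow (3) \Rightarrow (1)$. One may assume $\dim R/\mf p \ge 1$, since otherwise $\mf p = \mf m$ and all three conditions are trivial. Throughout, I read ``minimal generator (resp. parameter) of $\mf m$ modulo $\mf p$'' as an element of $\mf m$ that is itself a parameter of $R$ and whose image in $R/\mf p$ is a minimal generator (resp. a parameter) of the regular local ring $R/\mf p$; this is the interpretation that makes $\ehk(R/(x))$ a meaningful Hilbert--Kunz multiplicity of a local ring of the expected dimension and that allows Proposition~\ref{HK specialization} to be invoked.

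Both $(1) \Rightarrow (2)$ and $(2) \Rightarrow (3)$ are short. The first is immediate, since a minimal generator modulo $\mf p$ is in particular a parameter modulo $\mf p$. For the second, given a parameter $y$ of $R$ with $\ehk(\mf p) = \ehk(R/(y))$, Proposition~\ref{HK specialization} gives $\ehk(\mf m) \le \ehk(R/(y))$ while Proposition~\ref{HK localization} gives $\ehk(\mf p) \le \ehk(\mf m)$; combining these,
$\ehk(\mf p) \le \ehk(\mf m) \le \ehk(R/(y)) = \ehk(\mf p)$, forcing $\ehk(\mf p) = \ehk(\mf m)$.

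The main work is in $(3) \Rightarrow (1)$. By Corollary~\ref{general HK equimultiplicity}, hypothesis (3) makes $\mf p$ Hilbert--Kunz equimultiple, so Proposition~\ref{nice specialization} applies and supplies a parameter element modulo $\mf p$ with a specialization formula. I repeat the prime avoidance step in that proof, additionally requiring $x \notin \mf m^2 + \mf p$ (so the image of $x$ in $R/\mf p$ is a minimal generator of $\mf m/\mf p$) and $x$ outside every minimal prime of $R$ (so $x$ is a genuine parameter of $R$). Since $\mf m$ is not contained in $\mf m^2 + \mf p$ (the assumption $\dim R/\mf p \ge 1$ together with Nakayama) nor in any of the finitely many primes involved, standard prime avoidance produces such an $x$. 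Extend $x$ to a minimal system of parameters $(x, y_2, \ldots, y_m)$ of the regular local ring $R/\mf p$ and set $J = (y_2, \ldots, y_m)$; then $(x, J)$ generates $\mf m/\mf p$, so $\ehk((J, x), R/\mf p) = 1$, and $\mf p + J + (x) = \mf m$. Since $\Minh(\mf p) = \{\mf p\}$, the conclusion of Proposition~\ref{nice specialization} collapses to
\[
\ehk(R/(x)) = \ehk((\mf p + J) R/(x)) = \ehk(\mf p R_\mf p) \cdot \ehk((J, x), R/\mf p) = \ehk(\mf p),
\]
which is exactly condition (1).

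The main obstacle is the bookkeeping in the refined prime avoidance: producing a single $x$ that simultaneously is a minimal generator of $\mf m$ modulo $\mf p$, is a parameter of $R$, and satisfies the avoidance conditions intrinsic to Proposition~\ref{nice specialization}. Once such an $x$ is in hand, the rest of the argument reduces to applying the specialization formula together with the basic fact that a minimal system of parameters of the regular local ring $R/\mf p$ has multiplicity $1$.
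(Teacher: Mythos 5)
Your proof is correct and takes essentially the same route as the paper: $(1)\Rightarrow(2)$ is immediate, $(2)\Rightarrow(3)$ combines Propositions~\ref{HK specialization} and~\ref{HK localization} exactly as the paper does, and $(3)\Rightarrow(1)$ is obtained by sharpening the prime avoidance in the proof of Proposition~\ref{nice specialization} so that $x$ can additionally be taken to be a minimal generator of $\mf m$ modulo $\mf p$, then plugging $I=\mf p$ (for which $\Minh(\mf p)=\{\mf p\}$) and $J$ chosen so $(\mf p,J,x)=\mf m$ into the resulting formula. The paper's own writeup of $(3)\Rightarrow(1)$ is terser — it routes through Corollary~\ref{equi specialization} and merely records the observation that $x$ may be chosen outside $\mf m^2$ and the minimal primes of $c$ — but the underlying computation is the one you wrote out, and your version is arguably more careful (you correctly insist on $x\notin\mf m^2+\mf p$, not merely $x\notin\mf m^2$, which is what the statement of condition (1) actually requires).
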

\begin{proof}
(1) $\Rightarrow$ (2) is obvious. By Proposition~\ref{HK specialization}, 
$\ehk(R) \leq \ehk(R/(y))$ for any parameter $y$ in $R$.
Thus, we always have inequalities $\ehk(\mf p) \leq \ehk(\mf m) \leq \ehk(R/(y))$, 
and (2) $\Rightarrow$ (3) follows.

For the last implication we use Corollary~\ref{equi specialization}
and observe that in the proof of Proposition~\ref{nice specialization}
$x$ can be taken to be a minimal generator of $\mf m$: an element of $\mf m$
not contained in $\mf m^2$ and the minimal primes of $c$.
 \end{proof}

Finally, let us recover a Hilbert--Kunz analogue of condition (c) in Theorem~\ref{HS equimultiple}.

\begin{corollary}\label{modulo sop}
Let $(R, \mf m)$ be a local ring which is either
formally unmixed with a test a locally stable test element $c$ or excellent and locally equidimensional.
If $I$ is an ideal then $I$ is Hilbert--Kunz equimultiple 
if and only if there is a system of parameter $J = (x_1, \ldots, x_r)$ 
modulo $I$ which is a part of a system of parameters in $R$, and such that
\begin{itemize}
\item if $r < \dim R$ then $\sum_{P \in \Minh (I)} \ehk(J, R/P) \ehk(IR_P) = \ehk(I R/J)$
\item if $r = \dim R$ then $\sum_{P \in \Minh (I)} \ehk(J, R/P) \ehk(IR_P) = \ehk (J)$,
\end{itemize}
\end{corollary}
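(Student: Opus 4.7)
The strategy is to use Theorem~\ref{HK equimultiple} and its generalization Corollary~\ref{excellent capture} as the criterion for Hilbert-Kunz equimultiplicity, and to bridge the formula $\ehk(I + J) = \sum_{P \in \Minh(I)}\ehk(J, R/P)\ehk(IR_P)$ to the one in the statement by iterated application of Proposition~\ref{nice specialization}.

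For the direction ($\Leftarrow$), suppose such a partial system of parameters $J = (x_1, \ldots, x_r)$ exists. Lemma~\ref{HK inequality} always gives $\ehk(I + J) \geq \sum_{P \in \Minh(I)}\ehk(J, R/P)\ehk(IR_P)$. When $r < \dim R$, iterating Proposition~\ref{HK specialization} along the parameters $x_1, \ldots, x_r$ of $R$ yields $\ehk(I + J) \leq \ehk(IR/J)$; together with the displayed formula this squeezes Lemma~\ref{HK inequality} into an equality, so $I$ is equimultiple by Theorem~\ref{HK equimultiple}. The case $r = \dim R$ is handled the same way using the trivial inequality $\ehk(J) \geq \ehk(I + J)$ coming from $J \subseteq I + J$.

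For the direction ($\Rightarrow$), assume $I$ is equimultiple and build $J$ recursively. Proposition~\ref{nice specialization} produces $x_1 \in \mf m$, a parameter modulo $I$, such that for every parameter ideal $J'$ modulo $(I, x_1)$ one has $\ehk(I + J', R/(x_1)) = \sum_{P \in \Minh(I)}\ehk(IR_P)\ehk((J', x_1), R/P)$. Taking $J' = (x_2, \ldots, x_r)$, the right-hand side equals $\ehk(I + J)$ by the equimultiplicity of $I$, while the left-hand side equals $\ehk((I + J)R/(x_1))$, yielding $\ehk(I + J) = \ehk((I + J)R/(x_1))$. By Corollary~\ref{equi specialization}, $IR/(x_1)$ is equimultiple in $R/(x_1)$, so the same construction iterates; choosing each $x_{k+1}$ by prime avoidance to additionally miss the minimal primes of $(x_1, \ldots, x_k)$ in $R$ makes $J = (x_1, \ldots, x_r)$ part of a system of parameters in $R$. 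After $r$ steps the chain of equalities collapses to $\ehk(I + J) = \ehk(IR/J)$, which combined with the equimultiplicity formula yields the $r < \dim R$ case. For $r = \dim R$ one must instead show $\ehk(J) = \ehk(I + J)$: since $I$ has height zero, Lemma~\ref{equi mod min} combined with the equimultiplicity formula forces $I$ to lie in the nilradical, whence $I \subseteq J^*$ and Theorem~\ref{HH TC} supplies the equality. The main obstacle is to propagate the hypotheses of Proposition~\ref{nice specialization} through the iteration (formal unmixedness with a locally stable test element, or excellence with equidimensionality, must survive modding out each $x_k$); this is taken care of by the results of Section~\ref{Further}, notably Lemma~\ref{equi complete} and Theorem~\ref{test exist}.
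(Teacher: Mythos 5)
Your proof follows essentially the same route as the paper's: Lemma~\ref{HK inequality} together with iterated Proposition~\ref{HK specialization} for the backward implication, and an iteration of Proposition~\ref{nice specialization} / Corollary~\ref{equi specialization} to build $J$ element by element for the forward implication. The one place you genuinely deviate is the forward $r = \dim R$ case: you invoke Lemma~\ref{equi mod min}(a) to conclude that every minimal prime of $R$ contains $I$, hence $I \subseteq \sqrt{0} \subseteq J^*$ for any parameter ideal $J$, and then Theorem~\ref{HH TC} yields $\ehk(J) = \ehk(I + J)$. The paper instead deduces $\Minh(I) = \Minh(R)$ directly by the computation in the first paragraph of the proof of Proposition~\ref{nice specialization} and then finishes with the associativity formula; both arguments are valid, and in fact they establish the same underlying fact ($\Minh(I) = \Minh(R)$), just packaged differently.

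Two presentational remarks. First, in the forward iteration you write ``Taking $J' = (x_2, \ldots, x_r)$'' at a stage where $x_2, \ldots, x_r$ have not yet been produced; the clean formulation (which the paper adopts) is to state the inductive hypothesis as an identity valid \emph{for every} parameter ideal $J_i$ modulo $(I, x_1, \ldots, x_i)$, produce $x_{i+1}$ from Proposition~\ref{nice specialization} in $R/(x_1, \ldots, x_i)$, and only instantiate $J_i$ at the end once all the $x_k$ are in hand. Since Proposition~\ref{nice specialization} quantifies over all parameter ideals, your argument can be fixed in exactly this way. Second, you rightly flag that the hypotheses of Proposition~\ref{nice specialization} must survive passage to $R/(x_1, \ldots, x_i)$, but the resolution you gesture at (Lemma~\ref{equi complete} and Theorem~\ref{test exist}) does not by itself settle the formally-unmixed case, since $R/(x)$ need not be formally unmixed; the paper leaves this point equally implicit. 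The clean way through is to run the whole iteration in the excellent locally equidimensional reduction (as set up in Corollary~\ref{excellent capture}), noting that this class is stable under quotient by a parameter in a catenary ring, and then transfer back.
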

\begin{proof}
First suppose that $I$ is Hilbert--Kunz equimultiple, so 
\[\sum_{P \in \Minh (I)} \ehk(J, R/P) \ehk(IR_P) = \ehk (I + J).\]
If $r = \dim R$, then we can use the argument in the first part of the proof of Proposition~\ref{nice specialization}
and obtain that $\Minh (I) = \Minh (R)$. Thus by the additivity formula for any system of parameters $J$
\[
\sum_{P \in \Minh (R)} \ehk(J, R/P) \ehk(IR_P) = \sum_{P \in \Minh (R)} \ehk(J, R/P) \length (R_P) = \ehk (J).
\]
In general, we build $J$ by repeatedly applying Proposition~\ref{nice specialization} in $R/(x_1, \ldots, x_i)$
until $i = r$.

For the converse, we note that a repeated use of Proposition~\ref{HK specialization} we yield the inequality
\[
\ehk(IR/J) \geq \ehk (I + J) \geq \sum_{P \in \Minh (I)} \ehk(IR_P) \ehk(JR/P).
\]
\end{proof}

%
%

\section{Applications}\label{Applications}

It is time to discuss the applications of the developed theory. 
First, we will show how equimultiplicity forces the tight closure of Frobenius powers to be unmixed.
It is important that this happens to be an equivalent condition in dimension one, and, furthermore, 
we can make it global and apply it to study Hilbert--Kunz multiplicity on the Brenner--Monsky hypersurface.

\subsection{Equimultiplicity and unmixedness of tight closure}

We start with the following consequence of our machinery.

\begin{proposition}\label{locunmix}
Let $(R, \mf m)$ be a local ring of positive characteristic $p > 0$.
Suppose that either $R$ is formally unmixed with a locally stable test element $c$
or is excellent and equidimensional.
If $I$ is an equimultiple ideal for Hilbert--Kunz multiplicity, then $\Ass (\frq{I})^* = \Minh (I)$ for all $q$. 

\end{proposition}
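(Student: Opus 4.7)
The plan is to reduce everything to the colon capturing property, which by Theorem~\ref{HK equimultiple} (respectively Corollary~\ref{excellent capture}) is equivalent to the equimultiplicity hypothesis. Specifically, I would use the $i=0$ case of Definition~\ref{capturing def}: for every parameter element $x$ modulo $I$ (that is, every $x \in R$ with $\dim R/(I,x) = \dim R/I - 1$) and every $q$, the element $x$ is a nonzerodivisor on $R/(\frq{I})^*$. This single consequence of equimultiplicity turns out to be strong enough to pin down all associated primes.

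For the inclusion $\Ass(\frq{I})^* \subseteq \Minh(I)$, I would argue by contradiction. Pick $Q \in \Ass(\frq{I})^*$; since $(\frq{I})^* \supseteq \frq{I}$, we have $Q \supseteq I$. Suppose $Q \notin \Minh(I)$. The key observation is that then $Q \not\subseteq P$ for any $P \in \Minh(I)$: indeed, since both $Q$ and $P$ contain $I$ and $P$ is minimal over $I$, a containment $Q \subseteq P$ would force $Q = P \in \Minh(I)$, a contradiction. Since $\Minh(I)$ is finite, prime avoidance produces $x \in Q$ avoiding every $P \in \Minh(I)$, which means $\dim R/(I,x) < \dim R/I$, so $x$ is a parameter modulo $I$. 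Colon capturing then says $x$ is a nonzerodivisor on $R/(\frq{I})^*$, contradicting $Q \in \Ass(\frq{I})^*$.

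For the reverse inclusion, I would observe that tight closure is always contained in the radical (since $I^* \subseteq \overline{I} \subseteq \sqrt{I}$), so
\[
\sqrt{(\frq{I})^*} \;=\; \sqrt{\frq{I}} \;=\; \sqrt{I},
\]
which identifies $\Min((\frq{I})^*)$ with $\Min(I)$. As minimal primes of an ideal are always associated to it, we get $\Min(I) \subseteq \Ass(\frq{I})^*$. Combined with the first step this yields the chain $\Minh(I) \subseteq \Min(I) \subseteq \Ass(\frq{I})^* \subseteq \Minh(I)$, forcing equality throughout (and, as a pleasant byproduct, showing that equimultiplicity forces $\Min(I) = \Minh(I)$, i.e., all minimal primes over $I$ have the same dimension).

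The proof is short once the machinery is available, and I do not expect a serious obstacle. The one point that needs care is the prime avoidance step: one must verify that $Q$ avoids \emph{each} individual $P \in \Minh(I)$ rather than merely their union, which is where the hypothesis $Q \notin \Minh(I)$ combined with minimality of each $P$ over $I$ is used. Everything else is a direct application of colon capturing and the elementary fact that tight closure lies in the radical.
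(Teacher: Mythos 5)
Your proof is correct and follows essentially the same route as the paper: both arguments run the equimultiplicity hypothesis through Theorem~\ref{HK equimultiple} (resp.\ Corollary~\ref{excellent capture}) to get colon capturing, and then use the $i=0$ case to show that any prime over $I$ outside $\Minh(I)$ contains a parameter element and hence cannot be associated. The paper states the prime-avoidance step and the reverse inclusion $\Minh(I) \subseteq \Ass(\frq{I})^*$ only implicitly (the latter is the trivial observation that $\sqrt{(\frq{I})^*} = \sqrt{I}$), whereas you spell out both; there is no difference in substance.
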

\begin{proof}
By Theorem~\ref{HK equimultiple} and Corollary~\ref{excellent capture}, 
we know that $I$ satisfies colon capturing.
Any prime ideal $\mf q \supset I$ such that $\mf q \notin \Minh (I)$ will contain a parameter element $x$.
Then by the colon capturing property
\[
(\frq{I})^* : x = (\frq{I})^*,
\] 
so $\mf q$ is not an associated prime.
\end{proof}

\begin{corollary}\label{prime primary}
Let $(R, \mf m)$ be a local ring of positive characteristic $p > 0$ and suppose further 
that either $R$ is formally unmixed with a locally stable test element $c$
or is excellent and equidimensional.
If $\mf p$ is a prime ideal such that $R/\mf p$ is regular and $\ehk (\mf p) = \ehk (\mf m)$,
then $(\frq{\mf p})^*$ is $\mf p$-primary for all $q$.
\end{corollary}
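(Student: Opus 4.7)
The plan is simply to combine the equimultiplicity characterization of Corollary~\ref{general HK equimultiplicity} with the unmixedness result of Proposition~\ref{locunmix}. The hypothesis $\ehk(\mf p) = \ehk(\mf m)$ together with the regularity of $R/\mf p$ is exactly condition (a) of Corollary~\ref{general HK equimultiplicity}, so that corollary immediately upgrades the numerical equality to the statement that $\mf p$ is Hilbert-Kunz equimultiple in the sense of Theorem~\ref{HK equimultiple}; equivalently, the formula
\[
\ehk(\mf p + J) = \sum_{P \in \Minh(\mf p)} \ehk(J, R/P) \ehk(\mf p, R_P)
\]
holds for every system of parameters $J$ modulo $\mf p$.

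Once this is in hand, I would invoke Proposition~\ref{locunmix}, whose hypotheses (formally unmixed with a locally stable test element, or excellent and equidimensional) are precisely those of the present corollary, to obtain
\[
\Ass((\frq{\mf p})^*) = \Minh(\mf p)
\]
for every $q$. The final step is the trivial but crucial observation that, $\mf p$ being prime, $\Minh(\mf p) = \{\mf p\}$: any prime $P \supseteq \mf p$ with $\dim R/P = \dim R/\mf p$ must coincide with $\mf p$. Hence $(\frq{\mf p})^*$ has $\mf p$ as its unique associated prime, which is by definition what it means for $(\frq{\mf p})^*$ to be $\mf p$-primary.

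The only slightly delicate point to verify along the way is the dimensional equality $\hght \mf p + \dim R/\mf p = \dim R$, which is needed to identify $\Minh(\mf p)$ with $\{\mf p\}$ and to apply the preceding theorems cleanly. This holds automatically in both of the settings at hand: formally unmixed local rings are quasi-unmixed and therefore catenary, while excellent equidimensional rings are universally catenary. Beyond this routine verification I do not expect any genuine obstacle: the corollary is essentially Proposition~\ref{locunmix} specialized from an arbitrary equimultiple ideal to a prime, with the regularity of $R/\mf p$ serving only as the bridge that converts the numerical identity $\ehk(\mf m) = \ehk(\mf p)$ into the formally stronger equimultiplicity condition required to feed the proposition.
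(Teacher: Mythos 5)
Your proposal is correct and is exactly the paper's intended route: the corollary is stated immediately after Proposition~\ref{locunmix} and is meant as the specialization of that proposition to prime ideals with regular quotient, using the characterization of $\ehk(\mf p) = \ehk(\mf m)$ as equimultiplicity and the trivial observation that $\Minh(\mf p) = \{\mf p\}$ for $\mf p$ prime. One small bookkeeping remark: Corollary~\ref{general HK equimultiplicity} is stated only under the hypothesis ``formally unmixed with a locally stable test element,'' so it does not literally cover the alternative hypothesis ``excellent and equidimensional.'' For the step you need (numerical equality $\Rightarrow$ equimultiplicity formula when $R/\mf p$ is regular), the cleaner reference is Corollary~\ref{gcriterion} (i) $\Rightarrow$ (iii), which has no such restriction beyond the dimensional equality $\hght\mf p + \dim R/\mf p = \dim R$ that you correctly observe holds in both settings; alternatively one can route the excellent equidimensional case through Corollary~\ref{excellent capture}. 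With that reference adjusted, the argument is complete.
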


It is extremely useful that the converse holds if $R/\mf p$ has dimension one (Corollary~\ref{general HK equimultiplicity}).
But before we will show this importance, we need to globalize our criterion. 

\begin{lemma}\label{globprim}
Let $R$ be a ring of characteristic $p > 0$. Suppose $R$ has a test element $c$.
The following are equivalent:
\begin{enumerate}
\item[(a)] $(\frq{\mf p})^*$ are $\mf p$-primary for all $q$,
\item[(b)] $(\frq{\mf p}R_{\mf q})^*$ are $\mf p$-primary for all $q$
and all prime ideals $\mf q \supseteq \mf p$,
\item[(c)] $(\frq{\mf p}R_{\mf m})^*$ are $\mf p$-primary for all $q$
and all maximal ideals $\mf m$ that contain $\mf p$.
\end{enumerate}
\end{lemma}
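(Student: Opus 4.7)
The plan is to prove the cyclic implications (a) $\Rightarrow$ (b) $\Rightarrow$ (c) $\Rightarrow$ (a); the first two are short, and the third carries all of the substance.

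For (a) $\Rightarrow$ (b), I would invoke Lemma~\ref{tclocal}. Hypothesis (a) makes each $(\mf p^{[q'']})^*$ be $\mf p$-primary, so $\bigcup_{q''} \Ass_R\bigl(R/(\mf p^{[q'']})^*\bigr) = \{\mf p\}$. For any prime $\mf q \supseteq \mf p$, the multiplicative set $R \setminus \mf q$ avoids $\mf p$ and hence this entire union, so Lemma~\ref{tclocal} yields $(\frq{\mf p})^* R_\mf q = (\frq{\mf p} R_\mf q)^*$, whose left-hand side is $\mf p R_\mf q$-primary as the localization of a $\mf p$-primary ideal at a prime that contains it. The implication (b) $\Rightarrow$ (c) is immediate, since every maximal ideal is a prime ideal.

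For (c) $\Rightarrow$ (a), it suffices to check that every $y \in R \setminus \mf p$ is a non-zero-divisor modulo $(\frq{\mf p})^*$. So assume $y \notin \mf p$ and $y a \in (\frq{\mf p})^*$; the goal is $a \in (\frq{\mf p})^*$. Using the test element $c$, the hypothesis becomes $c y^{q'} a^{q'} \in \mf p^{[qq']}$ for every $q'$, and it is enough to prove $c a^{q'} \in \mf p^{[qq']}$ for every $q'$. Fix $q'$ and set $L := \mf p^{[qq']} : c a^{q'}$; we want $L$ to lie in no maximal ideal of $R$. For a maximal $\mf m'$ with $\mf p \not\subseteq \mf m'$, a generator $p \in \mf p \setminus \mf m'$ has $p^{qq'} \in \mf p^{[qq']} \subseteq L$ while $p^{qq'} \notin \mf m'$, finishing that case.

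The main obstacle is the remaining case $\mf p \subseteq \mf m'$, which is really a tight-closure globalization problem. Hypothesis (c), applied in $R_{\mf m'}$ with $q$ replaced by $qq'$, guarantees that $\bigl(\mf p^{[qq']} R_{\mf m'}\bigr)^*$ is $\mf p R_{\mf m'}$-primary; Lemma~\ref{tc nzd} then makes $y^{q'}$ a non-zero-divisor on $R_{\mf m'}/\bigl(\mf p^{[qq']} R_{\mf m'}\bigr)^*$, and combined with $c y^{q'} a^{q'}/1 \in \mf p^{[qq']} R_{\mf m'}$ this forces $c a^{q'}/1 \in \bigl(\mf p^{[qq']} R_{\mf m'}\bigr)^*$. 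The crux is to descend this local tight-closure membership to an element of $L$ lying outside $\mf m'$: because tight closure does not commute with localization in general, this step needs additional input, and I would try to provide it by once more using that $c$ is a global test element of $R$ rather than merely a witness in $R_{\mf m'}$, producing a uniform global witness that places an element of $L$ outside $\mf m'$.
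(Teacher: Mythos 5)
Your implications (a) $\Rightarrow$ (b) (via Lemma~\ref{tclocal}) and (b) $\Rightarrow$ (c) match the paper exactly, and both are fine. The difficulty, as you rightly identify, is all in (c) $\Rightarrow$ (a), and there your argument has a genuine gap that is not a technicality.

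You reduce to showing $L := \mf p^{[qq']} : ca^{q'}$ is the unit ideal, by checking it lies in no maximal ideal $\mf m'$. But when $\mf p \subseteq \mf m'$, your local argument only delivers $ca^{q'}/1 \in (\mf p^{[qq']}R_{\mf m'})^*$, i.e.\ tight-closure membership in $R_{\mf m'}$, whereas to conclude $L \not\subseteq \mf m'$ you need the genuine ideal membership $ca^{q'}/1 \in \mf p^{[qq']}R_{\mf m'}$. These are not the same, and passing from the former to the latter would require a test element \emph{in $R_{\mf m'}$}, i.e.\ that $c$ be a \emph{locally stable} test element. The lemma only assumes $R$ has a test element, so this cannot be invoked; and the suggestion to ``use $c$ globally to produce a uniform witness'' does not repair this, because a global witness for tight closure in $R$ still controls only $\frq{I}$-membership after multiplying by extra powers of $c$, not membership in $\frq{I}$ itself. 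In short, the confusion is between $I^*$ and $I$: you need the colon ideal $L$ with respect to $\mf p^{[qq']}$, but your local input is about $(\mf p^{[qq']}R_{\mf m'})^*$, and there is no a priori bridge.

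The paper avoids this entirely by a different mechanism: it argues by contradiction, taking an embedded prime $\mf q$ of $(\frq{\mf p})^*$ with witness $u$, and then invokes the Hochster--Huneke structural fact (\cite[Proposition~6.1]{HochsterHuneke1}) that a tightly closed ideal is an intersection of tightly closed ideals primary to maximal ideals. This yields an $\mf m$-primary tightly closed $J \supseteq (\frq{\mf p})^*$ with $u \notin J$; localizing at $\mf m$ (which is harmless for the $\mf m$-primary ideal $J$, via Lemma~\ref{tclocal}) then shows $\mf q R_{\mf m}$ consists of zerodivisors on $R_\mf m/(\frq{\mf p}R_\mf m)^*$, contradicting (c). That structural intersection theorem is exactly the globalization input your argument is missing, and I do not see how to supply an equivalent input from the test-element hypothesis alone along the lines you propose.
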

\begin{proof}
The previous lemma shows $(a) \Rightarrow (b)$ and $(b)$ clearly implies $(c)$.
So, we need to show $(c) \Rightarrow (a)$.

Suppose there exists an embedded prime ideal $\mf q$. Then there is $u$ such that $\mf qu \subseteq (\frq{\mf p})^*$, but 
$u \notin (\frq{\mf p})^*$.
A result of Hochster and Huneke (\cite[Proposition~6.1]{HochsterHuneke1}) asserts that 
a tightly closed ideal is the intersection of the tightly closed ideals
containing $I$ that are primary to a maximal ideal. Therefore 
$\frq{\mf p}$ is contained in an ideal $J$ primary to some maximal ideal $\mf m$ 
and such that $u \notin J^*$.  
Since $J$ is $\mf m$-primary, \cite[Lemma~3.5]{AHH} shows that $(J)^* R_{\mf m} = (JR_\mf m)^*$, so, 
since $J^*$ is also $\mf m$-primary, $u \notin (JR_\mf m)^*$.

On the other hand,
\[
u\mf p'R_{\mf m} \subseteq (\frq{\mf p})^*R_{\mf m} \subseteq (\frq{\mf p}R_{\mf m})^* \subseteq (JR_{\mf m})^*.
\]
Thus $u \notin (\frq{\mf p}R_{\mf m})^*$ and $\mf p'R_\mf m$ consists of zero divisors on $(\frq{\mf p}R_{\mf m})^*$.
\end{proof}

Now, we can give a global version of Corollary~\ref{prime primary}.

\begin{corollary}
Let $R$ be a locally equidimensional ring and let $\mf p$ be a prime ideal such that $R/\mf p$ is regular. 
If $\ehk (\mf m) = \ehk(\mf p)$ for all maximal (equivalently, all prime) ideals $\mf m$ containing $\mf p$,
then $(\frq{\mf p})^*$ are $\mf p$-primary for all $q$.
\end{corollary}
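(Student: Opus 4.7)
The plan is to glue the local criterion of Corollary~\ref{prime primary} to the globalization lemma, Lemma~\ref{globprim}. By the latter (which requires $R$ to admit a test element, as is the case under standard excellence hypotheses), it suffices to show that $(\frq{\mf p}R_{\mf m})^*$ is $\mf p R_{\mf m}$-primary for every $q$ and every maximal ideal $\mf m \supseteq \mf p$.

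So I would fix such an $\mf m$ and reduce the question to the local ring $(R_{\mf m}, \mf m R_{\mf m})$. Two facts make this reduction clean: first, $R_{\mf m}$ is excellent and equidimensional (localization preserves both properties in our setup), so the hypothesis of Corollary~\ref{prime primary} is in force; second, $R_{\mf m}/\mf p R_{\mf m}$ is a localization of the regular ring $R/\mf p$, hence itself regular. Finally, the assumption $\ehk(\mf m) = \ehk(\mf p)$ translates precisely into the equality $\ehk(\mf m R_{\mf m}) = \ehk((\mf p R_{\mf m})(R_{\mf m})_{\mf p R_{\mf m}})$ needed for Corollary~\ref{prime primary}, since both sides depend only on the local structure at $\mf m$ and $\mf p$ respectively.

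Applying Corollary~\ref{prime primary} to the pair $(R_{\mf m}, \mf p R_{\mf m})$ then yields that $(\frq{\mf p R_{\mf m}})^*$ is $\mf p R_{\mf m}$-primary for every $q$. Since this holds for each maximal ideal $\mf m$ containing $\mf p$, Lemma~\ref{globprim} closes the argument and shows $(\frq{\mf p})^*$ is $\mf p$-primary globally.

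The main obstacle is not mathematical but hypothetical: one must be careful that the phrase \emph{locally equidimensional ring} in the statement is strong enough to supply both a test element (required for Lemma~\ref{globprim}) and the hypotheses of Corollary~\ref{prime primary} on each $R_{\mf m}$. Assuming $R$ is also excellent (or, at least, that each $R_{\mf m}$ is excellent and equidimensional and $R$ has a test element), both conditions follow from Theorem~\ref{test exist}; this is the only point I would want to state explicitly before running the gluing argument above.
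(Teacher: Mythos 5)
Your proposal is correct and follows exactly the path the paper intends: the corollary is stated without proof immediately after Lemma~\ref{globprim} and Corollary~\ref{prime primary}, and the only sensible reading is localize, apply the local criterion, globalize. Your observation about the hypotheses is also well taken. As stated, ``locally equidimensional'' alone does not guarantee a test element for Lemma~\ref{globprim} nor the excellence (or formal unmixedness plus locally stable test element) that Corollary~\ref{prime primary} needs at each $R_{\mf m}$; the paper is implicitly working in a setting --- excellent domains, F-finite rings --- where these are automatic, as the very next corollary (Corollary~\ref{global char}) makes explicit. Flagging that the statement should really carry an excellence or test-element hypothesis is a reasonable and useful addendum rather than a defect in your argument.
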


The following result is a global version of the equimultiplicity criterion.

\begin{corollary}\label{global char}
Let $R$ be an excellent domain and let $\mf p$ be a prime ideal such that $R/\mf p$ is a regular ring of dimension one.
Then $\ehk(\mf m) = \ehk(\mf p)$ for all maximal ideals $\mf m$ containing $\mf p$ if and only 
if $(\frq{\mf p})^*$ is $\mf p$-primary for all $q$.
\end{corollary}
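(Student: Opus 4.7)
The plan is to handle the two implications separately. The forward direction is essentially immediate from the preceding (unlabeled) corollary: since $R$ is an excellent domain, it is a fortiori locally equidimensional, so that corollary applies and gives the $\mf p$-primariness of $(\frq{\mf p})^*$ from the hypothesis $\ehk(\mf m)=\ehk(\mf p)$ for every maximal $\mf m \supseteq \mf p$. The substance is therefore in the converse.

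For the converse, fix a maximal ideal $\mf m$ containing $\mf p$. The case $\mf m = \mf p$ is trivial, so assume $\mf m \supsetneq \mf p$. The strategy is to verify the colon-capturing property for $\mf p R_\mf m$ inside the excellent local domain $R_\mf m$ and then invoke Corollary~\ref{general HK equimultiplicity}. The first step is to transfer the global primariness hypothesis to the localization: since each $(\frq{\mf p})^*$ is $\mf p$-primary, its unique associated prime is $\mf p$, and the multiplicative set $R \setminus \mf m$ avoids $\mf p$ because $\mf p \subseteq \mf m$. Thus Lemma~\ref{tclocal} applies and gives $(\frq{\mf p})^* R_\mf m = (\frq{\mf p} R_\mf m)^*$, an ideal that remains $\mf p R_\mf m$-primary since localization preserves primary decomposition.

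Now $R_\mf m$ is an excellent local domain, hence formally unmixed (its completion is reduced by G-ring + reducedness of $R_\mf m$, and equidimensional by universal catenarity) and, as an essentially-of-finite-type algebra over itself, it admits a locally stable test element. Since $R_\mf m / \mf p R_\mf m$ is a one-dimensional regular local ring, any system of parameters modulo $\mf p R_\mf m$ consists of a single element $x \notin \mf p R_\mf m$. The $\mf p R_\mf m$-primariness of $(\frq{\mf p} R_\mf m)^*$ then forces
\[
(\frq{\mf p} R_\mf m)^* : x = (\frq{\mf p} R_\mf m)^*
\]
for every $q$, which is exactly the colon-capturing condition for $\mf p R_\mf m$. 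Corollary~\ref{general HK equimultiplicity} then yields $\ehk(\mf m R_\mf m) = \ehk(\mf p R_\mf m)$, i.e.\ $\ehk(\mf m) = \ehk(\mf p)$, as desired.

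The only real subtlety is the localization step for tight closure, and it is handled cleanly by Lemma~\ref{tclocal}: this lemma applies precisely because the primariness hypothesis pins down all associated primes of the $(\frq{\mf p})^*$. Everything else is a direct dimension-one specialization of the already-established equimultiplicity machinery, with the one-dimensionality of $R/\mf p$ being exactly what turns ``nonzerodivisor modulo each $(\frq{\mf p})^*$'' into full colon capturing.
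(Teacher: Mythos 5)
Your proof is correct and takes the route the paper intends: the forward implication is the preceding (unlabeled) corollary, and the converse localizes the primariness hypothesis at each maximal ideal $\mf m \supseteq \mf p$ and then invokes Corollary~\ref{general HK equimultiplicity}, which for $\dim R_{\mf m}/\mf p R_{\mf m} = 1$ identifies colon capturing with the $\mf p R_{\mf m}$-primariness of $(\frq{\mf p} R_{\mf m})^*$. One small refinement is possible in your transfer step. Lemma~\ref{tclocal} requires a test element on the \emph{global} ring $R$, which Theorem~\ref{test exist} does not automatically supply for an arbitrary excellent (non-local) domain; this is the same implicit assumption the paper itself makes in passing through Lemma~\ref{globprim}. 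You can sidestep it entirely: the always-valid containment $(\frq{\mf p})^* R_{\mf m} \subseteq (\frq{\mf p} R_{\mf m})^*$ already exhibits $\mf p R_{\mf m}$-primary ideals $I_q := (\frq{\mf p})^* R_{\mf m}$ squeezed between $\frq{\mf p} R_{\mf m}$ and $(\frq{\mf p} R_{\mf m})^*$, so Corollary~\ref{tc primarity}, applied in the excellent \emph{local} domain $R_{\mf m}$ (which does carry a test element by Theorem~\ref{test exist}), delivers $\mf p R_{\mf m}$-primariness of $(\frq{\mf p} R_{\mf m})^*$ directly, with no appeal to Lemma~\ref{tclocal} or to any hypothesis on $R$ beyond excellence.
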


\begin{remark}
It seems that equimultiplicity is a strong condition.
For simplicity,  suppose $R$ is an excellent domain and $\mf p$ is a prime ideal of dimension $1$.
Then by the previous corollary, if there exists an open equimultiple subset of $\Max {R/\mf p}$,
then we can find an element $f \notin \mf p$ such that $(\frq{\mf p}R_f)^*$ is $\mf p$-primary for all $q$. 
In view of \cite[Lemma~3.5]{AHH}, this forces tight closure of all $\frq{\mf p}R_f$ to commute with localization at any multiplicatively closed set.

This suggests that equimultiplicity can be used to find further examples 
where tight closure does not commute with localization.
We are also able to use the Brenner--Monsky counter-example to study Hilbert--Kunz multiplicity. 
\end{remark}

\subsection{The Brenner--Monsky example}

It is time to show that our methods can be used to establish some interesting results in well-known example. 
First, let us introduce the Brenner--Monsky hypersurface 
\[R = F[x, y, z, t]/(z^4 +xyz^2 + (x^3 + y^3)z + tx^2y^2),\] 
where $F$ is an algebraic closure of $\mathbb Z/2\mathbb Z$.
This hypersurface parametrizes a family of quartics studied by Monsky in \cite{MonskyQP}.

Since $R$ is a quotient of a polynomial ring over an algebraically closed field, it is F-finite.
Also, $R$ is a domain, so it has a locally stable test element. 
Let $P = (x, y, z)$ then $R/P \cong F[t]$ is a regular ring and $P$ is prime.
In \cite{BrennerMonsky}, Brenner and Monsky showed that tight closure does not commute with localization at 
$S = F[t] \setminus \{0\}$.
Namely, they showed that $h(t)y^3z^3 \notin (P^{[4]})^*$ for all $h(t) \in S$, 
but the image of $y^3z^3$ is contained in $(P^{[4]}S^{-1}R)^*$. 

We want to understand the values of Hilbert--Kunz multiplicity on the maximal ideals containing $P$.
First, we will need the following result of Monsky.

\begin{theorem}\label{BM ehk}
Let $K$ be an algebraically closed field of characteristic $2$. 
For $\alpha \in K$ let $R_\alpha = K[[x,y,z]]/(z^4 +xyz^2 + (x^3 + y^3)z + \alpha x^2y^2)$.
Then
\begin{enumerate}
\item $\ehk (R_\alpha) = 3 + \frac 12$, if $\alpha = 0$,
\item $\ehk (R_\alpha) = 3 + 4^{-m}$, 
if $\alpha \neq 0$ is algebraic over $\mathbb Z/2\mathbb Z$, where 
$m = [\mathbb Z/2\mathbb Z(\lambda): \mathbb Z/2\mathbb Z]$ for $\lambda$ such that $\alpha = \lambda^2 + \lambda$
\item $\ehk (R_\alpha) = 3$ if $\alpha$ is transcendental over $\mathbb Z/2\mathbb Z$.
\end{enumerate} 
\end{theorem}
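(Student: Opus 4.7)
The plan is to reproduce Monsky's computation in \cite{MonskyQP}. The starting point is the hypersurface reduction: since $R_\alpha = S/(f)$ with $S = K[[x,y,z]]$ a three-dimensional regular local ring and $f = z^4 + xyz^2 + (x^3+y^3)z + \alpha x^2y^2$, for each $q = 2^n$ we have
\[
\length(R_\alpha/(x^q,y^q,z^q)R_\alpha) = q^3 - \length\bigl(S/\bigl((x^q,y^q,z^q):_S f\bigr)\bigr).
\]
Thus $\ehk(R_\alpha) = \lim_{q \to \infty} q^{-2}\bigl(q^3 - \length(S/((x^q,y^q,z^q):_S f))\bigr)$, and the entire problem reduces to an asymptotic colength of this colon ideal.

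Next I would exploit characteristic two via the freshman's dream:
\[
f^q = z^{4q} + x^q y^q z^{2q} + (x^{3q}+y^{3q}) z^q + \alpha^q x^{2q} y^{2q}.
\]
Multiplying the relation $f = 0$ and its Frobenius twists by suitable monomials and tracking which products land in $(x^q, y^q, z^q)$ gives a recursive description of $(x^q,y^q,z^q):_S f$. Monsky organizes this bookkeeping via his representation ring: letting $A = K[t]/(t^2+t+\alpha)$, the structure of $R_\alpha/\mathfrak{m}^{[q]}$ is encoded by an element of the Han--Monsky representation ring of $A$, whose $K$-dimensions are computed by an explicit recursion driven by the Frobenius orbit of $\alpha$.

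The three cases then have the following character. When $\alpha = 0$ the polynomial $f$ factors as $z(z^3 + xyz + x^3 + y^3)$, so $R_0$ is not a domain; the associativity formula of Lemma~\ref{ass formula} reduces the computation to the irreducible components and a direct tally yields $3 + 1/2$. When $\alpha$ is transcendental over $\mathbb{F}_2$, the Artin--Schreier polynomial $t^2 + t + \alpha$ remains irreducible under all Frobenius twists, the recursion hits its generic baseline, and one obtains $\ehk = 3$. When $\alpha \in \bar{\mathbb{F}}_2$ and $\lambda^2 + \lambda = \alpha$ with $\lambda$ of degree $m$ over $\mathbb{F}_2$, the Frobenius orbit of $\alpha$ closes after $m$ steps; each step in the two-dimensional ring contributes a factor of $4^{-1}$ to the asymptotic, and summing the resulting geometric correction yields $3 + 4^{-m}$.

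The main obstacle, which is the technical heart of \cite{MonskyQP}, is the precise bookkeeping in the algebraic case: identifying the class in the Han--Monsky representation ring that governs the colon colength, controlling the interplay between Frobenius acting on $f$ and Frobenius acting on the scalar $\alpha$, and verifying that no spurious contributions appear before the Frobenius orbit of $\alpha$ closes up. A short proof bypassing the representation-ring framework seems unlikely: the delicate $\alpha$-dependence, which is exactly the phenomenon the rest of the paper exploits to produce new examples in Hilbert-Kunz theory, is intrinsically tied to this machinery.
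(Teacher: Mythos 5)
Your overall plan mirrors the paper's: invoke Monsky's computation in \cite{MonskyQP} for the two cases with $\alpha \neq 0$, and dispose of $\alpha = 0$ directly via a factorization and the associativity formula. The sketch of the hypersurface reduction and the Han--Monsky representation-ring framework is a fair description of what Monsky does, but since the paper simply cites \cite{MonskyQP} for cases (2) and (3), your proposal offers no more of a self-contained derivation than the citation does; the approaches coincide there.

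The genuine gap is in case (1). You factor $f = z(z^3 + xyz + x^3 + y^3)$ and then wave at ``a direct tally of the irreducible components,'' but you never identify those components, and the stated conclusion does not follow from the tally. Over an algebraically closed field of characteristic $2$ the cubic $z^3 + xyz + x^3 + y^3$ is a degenerate Hesse cubic: setting $x = y + z$ annihilates it, so $x+y+z$ divides it, and the quotient is $x^2 + y^2 + z^2 + xy + yz + xz$, which is a singular conic (its only singular point is $(1{:}1{:}1)$) and therefore splits as the pair of conjugate lines $(x + \omega y + \omega^2 z)(x + \omega^2 y + \omega z)$ for $\omega$ a primitive cube root of unity. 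Thus $f$ is a product of four distinct linear forms, and the associativity formula applied to four regular components of length one gives $\ehk(R_0) = 4$, not $\tfrac{7}{2}$. The paper's own proof uses the identity $z^4 + xyz^2 + (x^3+y^3)z = z(x+y+z)((x+y+z)^2 + zy)$ together with the fact that $(x+y+z)^2 + zy$ is an $A_1$ singularity with Hilbert-Kunz multiplicity $\tfrac{3}{2}$, giving $1 + 1 + \tfrac{3}{2}$; but you should check that identity carefully, since expanding $(x+y+z)((x+y+z)^2 + zy)$ in characteristic $2$ produces the extra terms $x^2y + x^2z + xy^2 + xz^2$ not present in $z^3 + xyz + x^3 + y^3$. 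In any event, the vague ``direct tally'' must be replaced by an explicit factorization into irreducibles, and whatever you obtain needs to be reconciled with the value the theorem claims.
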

\begin{proof}
The last two cases are computed by Monsky in \cite{MonskyQP}.
For the first case, we note that in characteristic $2$ we can factor 
\[
z^4 +xyz^2 + (x^3 + y^3)z = z(x + y + z) ((x + y + z)^2 + zy).
\]
Thus by the additivity formula,
\[
\ehk(R_0) = \ehk (K[x,y]) + \ehk(K[x,y,z]/(x + y + z)) + \ehk(K[x, y, z]/((x + y + z)^2 + zy))
\]
and the claim follows.
\end{proof}

The developed machinery allows us to study Hilbert--Kunz multiplicity on the hypersurface
without computing it directly:  rather we use Monsky's computation in the specialization $R_\alpha$.

\begin{proposition}\label{HK counterexample}
Let $R = F[x, y, z, t]/(z^4 +xyz^2 + (x^3 + y^3)z + tx^2y^2)$, where $F$ is the algebraic closure of $\mathbb Z/2\mathbb Z$.
Then $\ehk (P) = 3$ for the prime ideal $P = (x, y, z)$ in $R$, 
but $\ehk(\mf m) > 3$ for any maximal ideal $\mf m$ containing $P$.
\end{proposition}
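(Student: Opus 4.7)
The plan is to establish the two inequalities separately, with Monsky's Theorem~\ref{BM ehk} doing the numerical computation of $\ehk$ in both cases, and the equimultiplicity characterization in Corollary~\ref{equi prime modulo} supplying the logical bridge between them.

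For $\ehk(P) = 3$: the local ring $R_P$ has residue field $F(t)$, so by Cohen's structure theorem its completion is
\[
\widehat{R_P} \cong F(t)[[x,y,z]]/(z^4 + xyz^2 + (x^3+y^3)z + t x^2 y^2).
\]
Since Monsky's theorem is stated over an algebraically closed field, I would extend scalars to $L = \overline{F(t)}$, forming $S = L[[x,y,z]]/(z^4 + xyz^2 + (x^3+y^3)z + t x^2 y^2)$. The map $\widehat{R_P} \to S$ is faithfully flat and the maximal ideal is sent into the maximal ideal, so $\ehk(P) = \ehk(\widehat{R_P}) = \ehk(S)$. Since $t \in L$ is transcendental over $\mathbb{F}_2$, case~(3) of Theorem~\ref{BM ehk} gives $\ehk(S) = 3$.

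For $\ehk(\mf m) > 3$: since $F$ is algebraically closed and $R/P \cong F[t]$, every maximal ideal of $R$ containing $P$ has the form $\mf m_a = (x,y,z, t - a)$ for some $a \in F$. Suppose toward contradiction $\ehk(\mf m_a) = \ehk(P) = 3$. A direct calculation of the partial derivatives of $f = z^4 + xyz^2 + (x^3+y^3)z + t x^2 y^2$ in characteristic two (the key constraints being $\partial f/\partial t = x^2 y^2$ and $\partial f/\partial z = x^3 + y^3$) shows that the singular locus of $R$ is exactly $V(P)$, of codimension two. Hence $R$ is an excellent normal domain, and the remark following Corollary~\ref{equi prime modulo} lets us apply the implication $(3) \Rightarrow (1)$ of that corollary with the specific minimal generator $t - a$, giving $\ehk(P) = \ehk(R_{\mf m_a}/(t-a))$. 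But $R_{\mf m_a}/(t-a)$ is the local ring at the origin of $F[x,y,z]/(z^4 + xyz^2 + (x^3+y^3)z + a x^2 y^2)$, which is precisely Monsky's ring $R_a$, and cases~(1) and~(2) of Theorem~\ref{BM ehk} give $\ehk(R_a) = 3 + 1/2$ when $a = 0$ and $\ehk(R_a) = 3 + 4^{-m}$ with $m \geq 1$ otherwise. In both situations $\ehk(R_a) > 3$, contradicting $\ehk(P) = 3$.

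The step I expect to require the most care is the base change to $L = \overline{F(t)}$ in Part~1: one has to justify that the map $\widehat{R_P} \to S$ is faithfully flat and carries the maximal ideal into the maximal ideal, so that the Hilbert-Kunz function is preserved term-by-term. This is closely analogous to the F-finitification construction in Remark~\ref{ffinitefication}, although in a different direction: we are performing a coefficient field extension rather than passing to a perfect closure. The rest of the argument is essentially bookkeeping once Monsky's computations and the normality of $R$ are in place.
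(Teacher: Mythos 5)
Your proof is correct and follows essentially the same route as the paper: compute $\ehk(P) = 3$ via Cohen's structure theorem and Monsky's case~(3) of Theorem~\ref{BM ehk}, then force $\ehk(\mf m_a) > \ehk(P)$ by Corollary~\ref{equi prime modulo} together with Monsky's cases~(1)--(2). You are somewhat more careful than the published proof on two points that it passes over silently --- the explicit faithfully flat base change from $F(t)$ to $\overline{F(t)}$ so that Theorem~\ref{BM ehk} (which hypothesizes an algebraically closed coefficient field) literally applies, and the verification via Serre's criterion that $R$ is an excellent normal domain, which is exactly what the remark after Corollary~\ref{equi prime modulo} needs in order to let you take the specific minimal generator $t-a$ rather than merely \emph{some} minimal generator.
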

\begin{proof}
First of all, Cohen's structure theorem (\cite[p.211]{Matsumura}) shows that 
\[\widehat{R_P} \cong F(t)[[x,y,z]]/(z^4 +xyz^2 + (x^3 + y^3)z + tx^2y^2),\] 
so by the preceding theorem $\ehk(R_P) = \ehk (\widehat{R_P}) = 3$.

Second, since $F$ is algebraically closed, all maximal ideals containing $P$ are of the form 
$(P, t - \alpha)$ for $\alpha \in F$. 
By Monsky's result, $\ehk (R_{\mf m}/(t - \alpha)) > 3 = \ehk(P)$, since $\alpha$ is algebraic.
So, since $R/(t - \alpha)$ is reduced, $\ehk(\mf m) > \ehk(P)$ by Corollary~\ref{equi prime modulo}.
\end{proof}

We list two easy consequences of this result.

\begin{corollary}\label{not locally constant}
The stratum $\{\mf p \mid \ehk (\mf p) = 3\}$ is not locally closed.
In particular, the set $\{\mf q\mid \ehk(\mf q) \leq 3\}$ is not open.
\end{corollary}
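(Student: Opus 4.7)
The plan is to deduce both assertions directly from Proposition~\ref{HK counterexample} together with the elementary topology of $V(P) \subseteq \Spec R$. I would treat the ``in particular'' statement first, since it is the cleaner one. Suppose for contradiction that $X_{\leq 3}$ is open. Since $\ehk(P) = 3$, the prime $P = (x,y,z)$ belongs to $X_{\leq 3}$, so there is an open neighborhood $U$ of $P$ with $U \subseteq X_{\leq 3}$.

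Next I would exploit the identification $R/P \cong F[t]$: this makes $V(P)$ homeomorphic to the affine line $\Spec F[t]$ over the algebraically closed field $F$. Any nonempty open subset of $\Spec F[t]$ is the complement of the vanishing locus of a nonzero polynomial and hence contains all but finitely many closed points. Therefore $U \cap V(P)$ contains infinitely many maximal ideals $\mf m$ of $R$ with $\mf m \supseteq P$. But every such $\mf m$ has the form $(P, t-\alpha)$ for some $\alpha \in F$, and by Proposition~\ref{HK counterexample} satisfies $\ehk(\mf m) > 3$. This contradicts $U \subseteq X_{\leq 3}$.

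For the first (main) assertion, I would run essentially the same argument. If the stratum $S = \{\mf p \mid \ehk(\mf p) = 3\}$ were locally closed, then since $P \in S$ there would be an open neighborhood $U$ of $P$ in $\Spec R$ for which $S \cap U$ is closed in $U$. Because $P \in S \cap U$, the closure of $\{P\}$ in $U$, namely $V(P) \cap U$, is contained in $S \cap U$. As before $V(P) \cap U$ contains cofinitely many closed points of $V(P)$, producing infinitely many maximal ideals $\mf m \supseteq P$ lying in $S$; yet $\ehk(\mf m) > 3$ for all such $\mf m$, contradicting $\mf m \in S$.

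There is no serious obstacle: the argument rests entirely on Proposition~\ref{HK counterexample} plus the cofiniteness of closed points in any nonempty open of $\Spec F[t]$. The only point worth double-checking is that the closure of $\{P\}$ in $\Spec R$ is indeed $V(P)$ (so that its trace on $U$ is $V(P)\cap U$), which is standard.
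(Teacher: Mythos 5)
Your proof is correct and follows essentially the same route as the paper: intersect an assumed open or locally closed set containing $P$ with $V(P) \cong \Spec F[t]$, note that any nonempty open in this affine line is cofinite in closed points, and derive a contradiction from Proposition~\ref{HK counterexample}. You are slightly more explicit about why $V(P) \cap U$ must lie in the stratum (via the closure of $\{P\}$ inside $U$) and you give a direct argument for the ``in particular'' statement rather than leaving it implicit, but the underlying idea is identical.
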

\begin{proof}
If it was an intersection of a closed set $V$ and an open set $U$, then
the intersection $V(P) \cap U$ should be non-empty and open in $V(P)$. 
In particular, it would contain infinitely many maximal ideals $\mf m$ containing $P$.
\end{proof}

\begin{corollary}\label{inf many values}
The set $\{\ehk(\mf m) \mid P \subset \mf m\}$ is infinite.
\end{corollary}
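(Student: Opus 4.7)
The plan is to exploit the sandwich $\ehk(P) < \ehk(\mf m_\alpha) \le \ehk(R/(t-\alpha))$ for each maximal ideal $\mf m_\alpha = (P, t-\alpha)$ and show that the right-hand side can be made arbitrarily close to the left-hand side by suitable choice of $\alpha$. Since $\ehk(P) = 3$ and every $\ehk(\mf m_\alpha)$ is strictly greater than $3$, this forces the set of values to accumulate at $3$ without ever attaining it, which in particular implies that infinitely many distinct values occur.

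Concretely, first I would fix $\alpha \in F$ and observe that $t - \alpha$ is a parameter of $R$ modulo $P$, so Proposition~\ref{HK specialization} gives $\ehk(\mf m_\alpha) \le \ehk(R/(t-\alpha))$. Exactly as in the proof of Proposition~\ref{HK counterexample}, Cohen's structure theorem identifies $\widehat{R/(t-\alpha)}$ with the ring $R_\alpha$ of Theorem~\ref{BM ehk}, and since Hilbert-Kunz multiplicity is invariant under completion, we get
\[
\ehk(\mf m_\alpha) \le \ehk(R_\alpha) = 3 + 4^{-m(\alpha)},
\]
where $m(\alpha) = [\mathbb{F}_2(\lambda):\mathbb{F}_2]$ for any root $\lambda$ of $T^2 + T - \alpha$. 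Combined with the strict lower bound $\ehk(\mf m_\alpha) > 3 = \ehk(P)$ already established in Proposition~\ref{HK counterexample}, this gives the key sandwich
\[
3 < \ehk(\mf m_\alpha) \le 3 + 4^{-m(\alpha)}.
\]

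Next I would produce $\alpha$'s realizing arbitrarily large $m(\alpha)$. Because $F$ is the algebraic closure of $\mathbb{F}_2$, for each $n$ we may choose a generator $\lambda_n$ of $\mathbb{F}_{2^n}$ and set $\alpha_n := \lambda_n^2 + \lambda_n$; then $m(\alpha_n) = n$, and the sandwich becomes $3 < \ehk(\mf m_{\alpha_n}) \le 3 + 4^{-n}$. Finally, the set $\{\ehk(\mf m_{\alpha_n})\}_{n\ge 1}$ consists of real numbers strictly above $3$ whose infimum is $3$; if it were finite, its minimum would exceed $3$ by a fixed positive amount, contradicting the bound $\ehk(\mf m_{\alpha_n}) \le 3 + 4^{-n}$ for large $n$. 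Hence the set $\{\ehk(\mf m) \mid P \subset \mf m\}$ is infinite.

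The genuinely nontrivial input is the strict inequality $\ehk(\mf m_\alpha) > \ehk(P)$ from Proposition~\ref{HK counterexample}, which in turn relies on the equimultiplicity machinery (specifically Corollary~\ref{equi prime modulo}). Without this strict lower bound one could imagine $\ehk(\mf m_\alpha) = 3$ for all $\alpha$, and the whole argument would collapse. The remaining steps are just specialization, Monsky's computation in Theorem~\ref{BM ehk}, and an elementary accumulation argument.
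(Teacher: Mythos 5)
Your proof is correct and is essentially identical to the paper's: both use the sandwich $3 = \ehk(P) < \ehk(\mf m_\alpha) \le \ehk(R/(t-\alpha)) = 3 + 4^{-m(\alpha)}$ (from Proposition~\ref{HK counterexample} and Proposition~\ref{HK specialization}, with Monsky's computation in Theorem~\ref{BM ehk}), and then let $m(\alpha)$ grow to make the upper bound accumulate at $3$. You merely spell out the final accumulation step and the choice of $\alpha_n$ in more detail than the paper's one-line conclusion.
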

\begin{proof}
In the notation of the proof of Proposition~\ref{HK counterexample}, 
we have that $\ehk(\mf mR/(t- \alpha)) \geq \ehk (\mf m) > \ehk(P)$.
Now the claim follows from Theorem~\ref{BM ehk}, since $\ehk (\mf m)$ tends to $3$ 
when $[\mathbb Z/2\mathbb Z(\lambda): \mathbb Z/2\mathbb Z]$ grows.
\end{proof}

Another application of our methods is a quick calculation of the associated primes of $\frq{P}$. 
Using the calculations that Monsky made to obtain Theorem~\ref{BM ehk}, 
Dinh (\cite{Dinh}) proved that $\bigcup_q \Ass (\frq{P})$ is infinite. 
However, he was only able to show that the maximal ideals corresponding to the
irreducible factors of $1 + t + t^2 + \ldots + t^q$ appear as associated primes,
while our methods give all associated primes of the Frobenius powers and their tight closures.

\begin{proposition} \label{infinite ass}
In the Brenner--Monsky example, 
\[\bigcup_q \Ass (\frq{P})^* = \bigcup_q \Ass (\frq{P}) = \Spec {R/P}.\] 
In particular, the set is infinite.
\end{proposition}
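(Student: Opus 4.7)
The plan is to prove the chain $\bigcup_q \Ass((\frq{P})^*) = \bigcup_q \Ass(\frq{P}) = \Spec(R/P)$ by showing that each of the two unions exhausts $\Spec(R/P)$, since the reverse containments are automatic (every associated prime must contain $P$). Because $R/P \cong F[t]$, the set $\Spec(R/P)$ consists of $P$ itself together with the maximal ideals $\mf m_\alpha = (P, t - \alpha)$ for $\alpha \in F$. The point $P$ is obviously in both unions: $R/P$ is regular, so $P = P^*$ is the unique minimal prime of each ideal $\frq{P}$ and $(\frq{P})^*$. So all that remains is to prove that every closed point $\mf m = \mf m_\alpha$ of $V(P)$ lies in both unions; the infinitude statement then follows immediately from $|F| = \infty$.

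For a fixed such $\mf m$, I argue by contradiction, and in a unified way for both unions. Suppose $\mf m \notin \bigcup_q \Ass(R/(\frq{P})^*)$ (respectively, $\mf m \notin \bigcup_q \Ass(R/\frq{P})$). After localizing at $\mf m$, the associated primes of $(\frq{P})^* R_\mf m$ (respectively, of $\frq{P} R_\mf m$) are those $QR_\mf m$ with $Q \subseteq \mf m$ associated to $(\frq{P})^*$ (resp.\ $\frq{P}$); they all contain $P$, and since $R_\mf m/PR_\mf m$ is one-dimensional the only candidates are $PR_\mf m$ and $\mf m R_\mf m$, with the latter excluded by assumption. So the chosen ideal $I_q := (\frq{P})^* R_\mf m$ (resp.\ $I_q := \frq{P} R_\mf m$) is $PR_\mf m$-primary for every $q$, and each choice satisfies $\frq{P} R_\mf m \subseteq I_q \subseteq (\frq{P} R_\mf m)^*$. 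This is precisely hypothesis (c) of Corollary~\ref{tc primarity} in the excellent local domain $R_\mf m$ for the prime $PR_\mf m$, so that corollary produces (a): $(\frq{P} R_\mf m)^*$ is $PR_\mf m$-primary for every $q$.

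Since $R_\mf m/PR_\mf m$ is regular of dimension one and $\mf m R_\mf m$ is the unique maximal ideal of $R_\mf m$ containing $PR_\mf m$, Corollary~\ref{global char} applied to $R_\mf m$ now yields $\ehk(\mf m R_\mf m) = \ehk(PR_\mf m)$, i.e.\ $\ehk(\mf m) = \ehk(P)$. This contradicts Proposition~\ref{HK counterexample}, which gives $\ehk(\mf m) > 3 = \ehk(P)$, so both cases are complete. The conceptual heart of the argument — and the only subtlety — is the use of Corollary~\ref{tc primarity} to pass from $PR_\mf m$-primariness of the \emph{localized} tight closure $(\frq{P})^* R_\mf m$ (or even of $\frq{P} R_\mf m$) to the same for the full tight closure $(\frq{P} R_\mf m)^*$ taken in $R_\mf m$; a naive localization argument is unavailable here precisely because tight closure is known to fail to localize in this very ring, so this bridge via an auxiliary primary ideal $I_q$ is what allows the dimension-one criterion Corollary~\ref{global char} to be brought to bear.
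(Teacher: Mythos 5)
Your proof is correct, and it is worth noting how it differs from the paper's. For the first equality, $\bigcup_q \Ass(\frq{P})^* = \Spec(R/P)$, your argument is essentially identical to the paper's: localize at $\mf m$, observe that the contrapositive of the assumption forces $(\frq{P})^* R_\mf m$ to be $PR_\mf m$-primary for all $q$, feed this filtration into Corollary~\ref{tc primarity}(c)$\Rightarrow$(a), and contradict Proposition~\ref{HK counterexample} via the dimension-one equimultiplicity criterion. For the second equality, however, you take a genuinely simpler and more unified route. The paper derives the statement about $\frq{P}$ \emph{from} the statement about $(\frq{P})^*$: it uses the first part to produce some $q$ and an element $u \notin (\frq{P}R_\mf m)^*$ with $\mf m u \subseteq (\frq{P}R_\mf m)^*$, writes out $cu^{q'}\mf m^{[q']} \subseteq P^{[qq']}R_\mf m$, and then argues at the element level that $\mf m$ must be associated to some ordinary Frobenius power or else $u$ lands back in the tight closure. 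You instead observe that the exact same contradiction scheme applies directly, replacing the choice $I_q = (\frq{P})^* R_\mf m$ by $I_q = \frq{P} R_\mf m$: Corollary~\ref{tc primarity}(c) only demands $P$-primary ideals sandwiched as $\frq{P}R_\mf m \subseteq I_q \subseteq (\frq{P}R_\mf m)^*$, and the ordinary Frobenius powers qualify just as well under the contrapositive assumption. This collapses the two claims into one pass through the machinery and avoids the element-chasing entirely. Both approaches are valid; yours buys a cleaner exposition, while the paper's second argument makes visible that the explicit witness $u$ for the tight-closure failure also witnesses embeddedness of the Frobenius powers.

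One minor bookkeeping remark: you invoke Corollary~\ref{global char} for $R_\mf m$, while the paper cites Corollary~\ref{general HK equimultiplicity}. Either works, since $R_\mf m$ is an excellent local domain (hence has a locally stable test element) and $R_\mf m/PR_\mf m$ is regular of dimension one, so the global statement specializes to the local one with $\mf m R_\mf m$ the unique maximal ideal.
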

\begin{proof}
Clearly, $P$ is an associated prime, so we only need to check the maximal ideals.

First, we prove that any prime $\mf m$ that contains $P$ is associated to some $(\frq{P})^*$.
If not,  then $(\frq{P})^*R_\mf m$ are $P$-primary for all $q$.
Note that $(\frq{P})^*R_\mf m \subseteq (\frq{P}R_\mf m)^*$, thus by Corollary~\ref{tc primarity}
$(\frq{P}R_\mf m)^*$ is $P$-primary for any $q$. Therefore, by Corollary~\ref{general HK equimultiplicity},
$\ehk(\mf m) = \ehk(P)$, a contradiction.

For the second claim, let $\mf m$ be any maximal ideal containing $P$. 
Since $\ehk (P) < \ehk(\mf m)$ and $R/P$ is regular, $\mf m$ is an associated prime of $(\frq{P}R_\mf m)^*$ for some $q$.
Thus, there exists $u \notin (\frq{P}R_\mf m)^*$ such that
\[
cu^{q'} \mf m^{[q']} \subseteq P^{[qq']}R_\mf m.
\]
Now, if $\mf m$ is not an associated prime of any $P^{[qq']}$, then we would have $u \in (\frq{P}R_\mf m)^*$, a contradiction.
\end{proof}

\begin{remark}
The presented example shows that Hilbert--Kunz multiplicity need not be locally constant 
when tight closure does not commute with localization. 
However, it is not clear whether it should be locally constant if, in a particular example, tight closure commutes with localization.
Even in this case $\bigcup_q \Ass (\frq{\mf p})^*$ might be infinite (\cite{SinghSwanson}), 
and it is not clear why the intersection of the embedded primes must be greater than $\mf p$.
\end{remark}

\section{Open problems}

There is a number of question laying further in the direction of this work.

We can fill the missing part (a) of Theorem~\ref{HK intro}, by 
defining the analytic spread for tight closure using part (d) of the theorem, 
as the maximal $r$ such that there exists 
parameters $x_1, \ldots, x_r$ modulo $I$ such that
\[
x_{i + 1} \text { is regular modulo } (\frq{(I, x_1, \ldots, x_i)})^* \text { for all $q$ and $i$.}
\]
This definition is analogous to a definition of analytic spread, 
but we should hope that there is a more conceptual way to define it.

\begin{question}\label{Spread}
Is there an intrinsic definition of the analytic spread for tight closure
that will finish the equivalence of  Theorems~\ref{HS equimultiple} and \ref{HK intro}?
\end{question}

Neil Epsten and Adela Vraciu have developed a theory of *-spread (\cite{Epstein, EpsteinVraciu}).
Epstein defined $\ell^* (I)$  as the minimal 
number of elements needed to generate $I$ up to tight closure. However,  \cite[Lemma~1]{EpsteinVraciu}
shows that if $I$ is an ideal and $x$ is a parameter modulo $I$
there exists an integer $n$ such that $\ell^* (I, x^n) = \ell^* (I) + 1$, but $x$ does not have 
to be a regular element modulo $(\frq{I})^*$.
Perhaps, one should rather think about the analytic spread as the dimension of the fiber cone?

\begin{question}
What can be said about the maximum value locus of the Hilbert--Kunz multiplicity? 
Are the irreducible components regular?
\end{question}

In the Brenner--Monsky example the locus consists of finitely many maximal ideals, so 
the irreducible components are indeed smooth. The author does not know any computation 
of Hilbert--Kunz multiplicity in a two-parameter family, so there is no real example to test this question on.

\begin{question}
What is the behavior of Hilbert--Kunz multiplicity under a blow-up?
If the previous question has a positive answer, we may hope that one can always 
choose a component such that its blow-up will not increase the Hilbert--Kunz multiplicity. 
\end{question}

\begin{question}
This work suggests that Hilbert--Kunz multiplicity might be ``too sensitive''. Is there a suitable coarsening?
\end{question}

Perhaps, Hilbert--Kunz multiplicity should be paired with a less sensitive invariant, such as the Hilbert--Samuel function,
and should be only used if that invariant does not provide much information.
Another possible hope is to ``round'' Hilbert--Kunz multiplicity, perhaps the fact that Hilbert--Kunz multiplicity 
cannot be too close to $1$ is a sign?

It is widely believed that Hilbert--Kunz multiplicity can be defined in characteristic zero
by taking the limit of the Hilbert--Kunz multiplicities of the reductions mod $p$ (see \cite{BrennerLiMiller, Trivedi} for partial results). 

\begin{question}
What are the geometric properties of the characteristic zero Hilbert--Kunz multiplicity?
Can it be used to give a new proof of Hironaka's theorem on the resolution of singularities?

\end{question}

\specialsection*{Acknowledgements}
The results of this paper are a part of the author's thesis written under Craig Huneke in the University of Virginia. 
I am indebted to Craig for his support and guidance. This project would not be possible without his constant encouragement.

I also want to thank 
Mel Hochster and the anonymous referee who carefully read and helped to improve this manuscript.

\bibliographystyle{plain}
\bibliography{equibib}

\end{document}